\newtheorem{thm}{Theorem}[section]
\newtheorem{corr}[thm]{Corollary}
\newtheorem{lem}[thm]{Lemma}
\newtheorem{prop}[thm]{Proposition}
\theoremstyle{definition}
\newtheorem{rem}[thm]{Remark}
\numberwithin{equation}{section}
\def\R{\mathbb R}
\def\ra{\rightarrow}
\def\pt{\partial}
\def\M{M}
\DeclareMathOperator\GG{G}
\DeclareMathOperator\GL{GL}
\DeclareMathOperator\SO{SO}
\DeclareMathOperator\vol{vol}
\DeclareMathOperator\inj{inj}
\begin{document}
\title[Laplacian flow for closed \texorpdfstring{$\GG_2$}{G2} structures]{Laplacian flow for closed \texorpdfstring{{\boldmath $\GG_2$}}{G2} structures: real analyticity}
\author{Jason D. Lotay}
\author{Yong Wei}
\address{Department of Mathematics, University College London, Gower Street, London, WC1E 6BT, United Kingdom}
\email{j.lotay@ucl.ac.uk; yong.wei@ucl.ac.uk}
\subjclass[2010]{{53C44}, {53C25}, {53C10}}
\keywords{Laplacian flow, $\GG_2$ structure, real analyticity}
\thanks{This research was supported by EPSRC grant EP/K010980/1.}

\begin{abstract}
Let $\varphi(t), t\in [0,T_0]$ be a solution to the Laplacian flow for closed $\GG_2$ structures on a compact $7$-manifold $\M$. We show that for each fixed time $t\in (0,T_0]$,  $(\M,\varphi(t),g(t))$ is  real analytic, where $g(t)$ is the  metric induced by $\varphi(t)$.  Consequently, any Laplacian soliton is real analytic and we obtain unique continuation results for the flow.
\end{abstract}
\maketitle
\tableofcontents

\section{Introduction}
Let $\M$ be a compact $7$-manifold and let $\varphi_0$ be a closed $\GG_2$ structure on $\M$. We consider solutions $\varphi(t), t\in [0,T_0]$, to the Laplacian flow  for closed $\GG_2$ structures:
\begin{equation}\label{Lap-flow-def}
  \left\{\begin{array}{rcl}
         \frac{\pt}{\pt t}\varphi &=&\Delta_{\varphi}\varphi,\\
           d\varphi &=&  0, \\
           \varphi(0) &=&\varphi_0,
         \end{array}\right.
\end{equation}
where $\Delta_{\varphi}\varphi=dd^*\varphi+d^*d\varphi$ is the Hodge Laplacian of $\varphi(t)$ with respect to the metric $g(t)$ determined by $\varphi(t)$.

The flow \eqref{Lap-flow-def} was introduced by Bryant (see \cite[\S 5]{bryant2005}) as a potential way to study the challenging problem of existence of
torsion-free $\GG_2$ structures and thus Ricci-flat metrics with exceptional holonomy $\GG_2$, since stationary
points of the flow are the $\GG_2$ structures $\varphi$ satisfying $d\varphi=d^*\varphi=0$, which is the
 torsion-free condition.  (Although this statement about the stationary points is true for compact manifolds by integration by parts, we gave an alternative argument in \cite{Lotay-Wei-1} which shows that stationary points of the flow are always torsion-free, even in the non-compact setting.)  Moreover, the flow moves within the cohomology
class of $\varphi_0$ and has a variational interpretation due to Hitchin \cite{hitchin2000, bryant-xu2011}. The primary goal in the field is to find conditions on an initial closed $\GG_2$ structure $\varphi_0$ such that the flow
\eqref{Lap-flow-def} will exist for all time and converge to a torsion-free $\GG_2$ structure.  Situations under which this occurs were proved by the authors in
\cite{Lotay-Wei-2}.

As $\M$ is compact, the Laplacian flow starting from any closed $\GG_2$ structure $\varphi_0$ is guaranteed to have a unique solution $\varphi(t)$ for a short time $t\in[0,\epsilon)$, where $\epsilon$ depends on $\varphi_0$ (see \cite{bryant2005,bryant-xu2011}). In our previous papers \cite{Lotay-Wei-1,Lotay-Wei-2}, we studied various foundational analytical and geometric properties of the flow \eqref{Lap-flow-def},
including Shi-type derivative estimates, uniqueness theorems, compactness results, soliton solutions, long-time existence results and stability of torsion-free $\GG_2$ structures along the flow.

On the face of it these analytic results are somewhat surprising because the velocity of the flow \eqref{Lap-flow-def} is defined by the Hodge Laplacian, which we would usually think
of as a positive operator, and thus the flow appears to look like a backwards heat equation.  In spite of this, the Laplacian flow is actually weakly parabolic
in a certain non-standard sense: it is parabolic in the direction of closed forms, modulo the action of diffeomorphisms.
It is this fact that enables the analysis of the flow to proceed. The reader is referred to \cite{Lotay-Wei-1,Lotay-Wei-2} for more detailed information about the Laplacian flow.

In this paper, we continue to analyze the Laplacian flow \eqref{Lap-flow-def} and investigate the regularity of
the solution $\varphi(t)$ for each positive time $t$.   Our main result is the following.

\begin{thm}\label{mainthm-local}
If $\varphi(t), t \in [0,T_0]$ is a smooth solution to the Laplacian flow \eqref{Lap-flow-def} for closed $\GG_2$ structures on an open set $U\subset \M$, then for each time $t\in (0,T_0]$, $(U,\varphi(t),g(t))$ is  real analytic.
\end{thm}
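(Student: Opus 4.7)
The plan is to adapt Kotschwar's approach to real analyticity of geometric flows (developed for Ricci flow) to the Laplacian flow setting. The first step is to break the diffeomorphism gauge invariance: since \eqref{Lap-flow-def} is only weakly parabolic, I would introduce a DeTurck-type modification by subtracting the Lie derivative of $\varphi$ along a suitable vector field $W=W(\varphi,\varphi_0)$, obtaining a modified flow $\partial_t\tilde\varphi = \Delta_{\tilde\varphi}\tilde\varphi + \mathcal{L}_{W}\tilde\varphi$ that is strictly parabolic and whose right-hand side depends polynomially (hence real analytically) on the $2$-jet of $\tilde\varphi$. Because real analyticity is preserved under real analytic change of coordinates, it suffices to show that $\tilde\varphi(t)$ is spatially real analytic for each $t>0$; one then transports analyticity back through the gauge diffeomorphism, which itself solves an ODE with real analytic coefficients and is therefore real analytic in $x$ for each $t>0$.

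With the system now strictly parabolic, the heart of the argument is a Bernstein--Bando--Shi type estimate with sharp combinatorial control in the order of differentiation. Shi-type derivative bounds $\sup_U|\nabla^k\tilde\varphi(t)|\le C_k$ for $t$ bounded away from zero were established in \cite{Lotay-Wei-1}, but with constants $C_k$ whose dependence on $k$ is not specified. Real analyticity requires $C_k\le A B^{k} k!$. To obtain this, I would work on a small parabolic cylinder $B_r(x_0)\times[t_1,t_0]\subset U\times(0,T_0]$ and introduce a master function of the shape
\[
F(x,t) \;=\; \sum_{k\ge 0}\frac{\rho^{2k}}{(k!)^{2}}\,\bigl|\nabla^{k}\tilde\varphi\bigr|_{g(t)}^{2}
\]
with a suitable spatial cutoff and a small parameter $\rho>0$. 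Computing $(\partial_t-\Delta)F$ and commuting $\nabla^{k}$ past the evolution produces lower-order terms of the schematic form $\nabla^{i}\tilde\varphi * \nabla^{j}\tilde\varphi$ with $i+j\le k+2$, plus analogous contributions from derivatives of curvature and of the full torsion (which can themselves be written as polynomials in $\tilde\varphi$ and its derivatives). The decisive combinatorial step is to show that the Leibniz-style sums so generated can be packaged as a convolution that $F$ absorbs, yielding a parabolic differential inequality for $F$ with constants independent of $k$. The maximum principle then gives a uniform bound $F\le C$ on an interior subcylinder.

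Reading off this bound term-by-term yields $|\nabla^{k}\tilde\varphi|(x_0,t_0)\le A B^{k}k!$ for every $k$, which is the definition of real analyticity in space of $\tilde\varphi(t_0)$ at $x_0$; pulling back by the analytic gauge diffeomorphism gives analyticity of $\varphi(t_0)$, and consequently of the induced metric $g(t_0)=g(\varphi(t_0))$, since the map $\varphi\mapsto g(\varphi)$ is itself real analytic (indeed algebraic). The main obstacle, and where I expect most of the technical work to lie, is the combinatorial bookkeeping in the commutator $[\partial_t-\Delta,\nabla^{k}]\tilde\varphi$: because the metric, connection, Riemann curvature and torsion tensor all depend nonlinearly on $\tilde\varphi$, each commutator generates a proliferating family of error terms, and one must verify that every such contribution fits inside the convolution structure that $F$ can absorb after a judicious choice of $\rho$ and a careful induction on $k$.
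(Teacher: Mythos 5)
Your plan founders at the first step. The premise that a DeTurck-type modification renders the Laplacian flow strictly parabolic is false, and this is exactly the ``non-standard'' weak parabolicity the problem is built around: the linearization of $\varphi\mapsto\Delta_\varphi\varphi+\mathcal{L}_{W(\varphi)}\varphi$ has principal symbol that is only positive semi-definite on all of $\Lambda^3T^*M$; it is definite only in the directions of closed (in fact exact) variations. This is why Bryant and Xu's short-time existence proof \cite{bryant-xu2011} cannot simply quote standard parabolic theory for the gauge-fixed equation and instead has to exploit the restriction to a fixed cohomology class. Consequently there is no strictly parabolic system for the components of $\tilde\varphi$ to which your master function $F=\sum_k \rho^{2k}(k!)^{-2}|\nabla^k\tilde\varphi|^2$ and the maximum principle can be applied: the term $-2\sum_k \rho^{2k}(k!)^{-2}|\nabla^{k+1}\tilde\varphi|^2$ that must absorb the convolution errors is simply not produced by a degenerate equation. (Two smaller problems: the gauge diffeomorphisms are obtained by integrating $W$ from $t=0$, where the data are only smooth, so they are not spatially real analytic as claimed --- although this is harmless, since real analyticity of the structure is diffeomorphism-invariant; and bounds $|\nabla^k\varphi|\le AB^kk!$ on \emph{covariant} derivatives are not ``the definition'' of real analyticity --- one still needs the passage to normal/harmonic coordinates as in Lemma \ref{lem-3-1} and \cite{DeT-Kaz1981}.)

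The repair --- and what the paper actually does, following Bando \cite{Bando} and Kotschwar \cite{kot-2013} for Ricci flow --- is to avoid gauge-fixing entirely and run your weighted-sum argument on quantities that satisfy genuine reaction--diffusion equations along the \emph{unmodified} flow. Along \eqref{Lap-flow-def} one has $(\frac{\pt}{\pt t}-\Delta)Rm$, $(\frac{\pt}{\pt t}-\Delta)T$ and $(\frac{\pt}{\pt t}-\Delta)\nabla\varphi$ all controlled by quadratic lower-order expressions (\eqref{evl-tors-0}--\eqref{evl-Rm-0}); note that one works with $\nabla\varphi$ and higher derivatives rather than $\varphi$ itself, since $\Delta_\varphi\ne\Delta$. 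One then sets
$\Phi_N=\sum_{k=0}^{N} t^k(k+1)!^{-2}\bigl(|\nabla^kRm|^2+|\nabla^{k+1}T|^2+|\nabla^{k+2}\varphi|^2\bigr)$,
tracks the combinatorial growth of the commutator terms exactly as you envisage, localizes with a cutoff $\eta^{k+1}L^{-k}$ weighting in the spirit of \cite{kot-2013}, and applies the maximum principle. Your instincts about the convolution structure and the factorial bookkeeping are right; they are just being applied to the wrong object via a parabolicity claim that does not hold.
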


Readers are referred to \S \ref{sec:real-anal} for the definition and
 criterion for a $\GG_2$ structure to be real analytic.
Real analyticity for positive times is well known for
linear parabolic PDE (such as the heat equation) and some weakly parabolic nonlinear PDE (such as Ricci flow \cite{Bando}).  However, as we have indicated,
the Laplacian flow is not weakly parabolic in a standard manner, and so one should not immediately expect such a regularity result.

Since any Laplacian soliton corresponds to a local self-similar solution to the Laplacian flow \eqref{Lap-flow-def}, we have the following corollary to Theorem \ref{mainthm-local}.
\begin{corr}
Suppose $(\M, \varphi,X,\lambda)$ is a Laplacian soliton (not necessarily compact), i.e., $d\varphi=0$ and
\begin{equation}\label{solit}
  \Delta_{\varphi}\varphi=\lambda\varphi+\mathcal{L}_X\varphi
\end{equation}
for some smooth vector field $X$ and constant $\lambda$. Then $(\M,\varphi)$ is real analytic.
\end{corr}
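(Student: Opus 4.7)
The plan is to realize the soliton as the initial condition of a local self-similar solution to the Laplacian flow, apply Theorem \ref{mainthm-local} at a positive time, and then transport real analyticity back to $t=0$ through a carefully chosen chart.

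To build the self-similar flow, set $c(t) := \sqrt{1 + (2\lambda/3)\,t}$ and let $\psi_t$ be the flow of the time-dependent vector field $c(t)^{-2} X$, so $\psi_0 = \mathrm{id}$; then define $\varphi(t) := c(t)^3\, \psi_t^*\varphi$. Because $\varphi \mapsto c^3 \varphi$ rescales the induced metric by $c^2$ and hence the Hodge Laplacian on $3$-forms by $c^{-2}$, and because $\Delta$ is diffeomorphism-invariant, a direct computation using the soliton equation \eqref{solit} yields $\pt_t \varphi(t) = \Delta_{\varphi(t)}\varphi(t)$; closedness $d\varphi(t)=0$ is immediate from $d\varphi=0$. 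If $\M$ is non-compact then $\psi_t$ may fail to exist globally, so I would work locally: for each $p\in\M$ pick a precompact open neighborhood $U'\subset\M$ of $p$ and $t_0>0$ small enough that $\psi_t$ is defined on $U'$ for $t\in[0,t_0]$ and $\psi_{t_0}^{-1}(p)\in U'$, which makes $\varphi(t)$ a smooth solution of \eqref{Lap-flow-def} on $U'$.

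Applying Theorem \ref{mainthm-local} to this local solution, $\varphi(t_0)$ is real analytic on $U'$. Inverting the self-similar ansatz gives $\varphi = c(t_0)^{-3}\,G^*\varphi(t_0)$ with $G:=\psi_{t_0}^{-1}$, holding on the open set $\psi_{t_0}(U')$, which contains $p$ by our choice. The subtlety is that $G$ is only known to be smooth, not real analytic, but this is overcome by the following chart trick: choose real analytic coordinates $y$ for $\varphi(t_0)$ on a neighborhood of $G(p)\in U'$, and set $x := y\circ G$ near $p$. In the chart $(W,x)$ so defined, $G$ sends coordinate frames to coordinate frames, $G_*\, \pt/\pt x^i = \pt/\pt y^i$, and the components of $G^*\varphi(t_0)$ in $x$-coordinates at a point equal the analytic components of $\varphi(t_0)$ in $y$-coordinates at its image under $G$. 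Hence $\varphi$ has real analytic components in the $x$-chart about $p$.

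The only real obstacle beyond Theorem \ref{mainthm-local} itself is this chart construction, which is forced upon us because the vector field $X$ and hence the diffeomorphism $\psi_{t_0}$ cannot be assumed analytic a priori---indeed, that is precisely what we are trying to prove. Since $p\in\M$ was arbitrary, real analytic coordinates for $\varphi$ exist in a neighborhood of every point, showing that $(\M,\varphi)$ is real analytic.
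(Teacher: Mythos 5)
Your proposal is correct and implements the idea the paper leaves implicit (the paper justifies this corollary only by the one-line remark that a soliton gives a local self-similar solution). Your self-similar ansatz $\varphi(t)=c(t)^3\psi_t^*\varphi$ with $c(t)^2=1+\tfrac{2\lambda}{3}t$ and $\psi_t$ the flow of $c(t)^{-2}X$ is the right one, and the local-in-space construction on a precompact $U'$ correctly handles non-compactness. However, the second half of your argument --- running the flow forward to $t_0>0$ and then transporting analyticity back through $G=\psi_{t_0}^{-1}$ via the chart trick --- is an avoidable detour. Since $c(t)^2>0$ for $|t|$ small and the local flow of $c(t)^{-2}X$ exists for small negative times as well, the self-similar solution is defined on $U'\times[-\epsilon,\epsilon]$; setting $\hat\varphi(s):=\varphi(s-\epsilon)$ gives a Laplacian flow solution on $U'\times[0,2\epsilon]$ with $\hat\varphi(\epsilon)=\varphi$, so Theorem \ref{mainthm-local} applies to $\varphi$ itself at the positive time $s=\epsilon$, with no pullback needed. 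If you do keep the forward-time route, note one point your chart trick glosses over: the paper's notion of real analyticity of $(\M,\varphi)$ is certified via normal coordinates (so that \cite[Lemma 13.20]{Chow-etal-2008} and DeTurck--Kazdan give a genuine real analytic atlas), whereas your charts $x=y\circ G$ are not obviously mutually compatible if different base points use different $t_0$. This is repairable --- since $G^*g_{\varphi(t_0)}=c(t_0)^2 g_\varphi$, the map $G$ is a homothety, so taking $y$ to be $g_{\varphi(t_0)}$-normal coordinates makes $c(t_0)^{-1}\,y\circ G$ into $g_\varphi$-normal coordinates in which $g_\varphi$ and $\varphi$ are real analytic, which is exactly the paper's criterion --- but it should be said; the time-translation argument sidesteps the issue entirely.
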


The real analyticity of a torsion-free $\GG_2$ structure, i.e., the case $X=0$, $\lambda=0$ in \eqref{solit}, is already well-known (see \cite{bryant2010} for
example). Moreover, real analyticity plays a significant role in $\GG_2$ geometry, as can be seen in \cite{bryant2010}.

For convenience we say a $\GG_2$ structure $\varphi$ on $M$ is complete if its associated metric is complete. By modifying the argument in the proof of \cite[Corollary 6.4, p.256]{KoNo}, Theorem \ref{mainthm-local} immediately implies the following unique continuation results.
\begin{corr}
Suppose that $M^7$ is connected and simply connected, and $\varphi(t)$, $\tilde{\varphi}(t)$ are smooth
complete solutions to the Laplacian flow \eqref{Lap-flow-def} on $M\times [0, T_0]$. Then, for any $t\in (0, T_0]$, the following hold.
\begin{itemize}
\item[(a)] If $\varphi(t)=\tilde{\varphi}(t)$ on some open set $U\subset M$, then there exists a diffeomorphism $F$ of $M$ such that $F^*\tilde{\varphi}(t)\equiv\varphi(t) $.
\item[(b)] Any local diffeomorphism $F:U\ra V$ between connected open sets $U,V\subset M$ such that $F^*(\varphi(t)|_V)=\varphi(t)|_U$ can be uniquely extended to a global diffeomorphism $F$ of $M$ with $F^*\varphi(t)=\varphi(t)$.
\end{itemize}
 \end{corr}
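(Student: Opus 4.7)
The strategy is to combine Theorem \ref{mainthm-local} with the classical Kobayashi--Nomizu unique continuation principle for local affine maps/isometries of real analytic manifolds, and then upgrade the resulting global isometry to a $\GG_2$ equivalence using the identity theorem for real analytic tensor fields. Fix $t\in(0,T_0]$ and write $g=g(t)$, $\tilde g=\tilde g(t)$. By Theorem \ref{mainthm-local}, both $(\M,\varphi(t),g)$ and $(\M,\tilde\varphi(t),\tilde g)$ are real analytic, and in particular so are the Levi-Civita connections of $g$ and $\tilde g$. Note that since a $\GG_2$ structure pointwise determines its metric, any local map with $F^*\tilde\varphi=\varphi$ (or $F^*\varphi=\varphi$) is automatically a local isometry.

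For part (b), the local diffeomorphism $F:U\to V$ is a local isometry of the connected, simply connected, complete, real analytic Riemannian manifold $(\M,g)$. The Kobayashi--Nomizu result (the affine version of Corollary 6.4 in \cite{KoNo}, applied to the Levi-Civita connection), together with the standard covering-space argument that promotes a local isometry between complete simply connected Riemannian manifolds of equal dimension to a global diffeomorphism, extends $F$ uniquely to a diffeomorphism $\tilde F:\M\to\M$ with $\tilde F^*g=g$. On $U$ we still have $\tilde F^*\varphi(t)=\varphi(t)$; since $\tilde F^*\varphi(t)$ and $\varphi(t)$ are real analytic $3$-forms on the connected real analytic manifold $\M$ that coincide on the open set $U$, the identity theorem for real analytic tensor fields gives $\tilde F^*\varphi(t)\equiv\varphi(t)$ on $\M$.

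For part (a), the hypothesis $\varphi(t)=\tilde\varphi(t)$ on $U$ forces $g=\tilde g$ on $U$, so the inclusion $U\hookrightarrow\M$ is a local isometry from $(\M,g)$ into $(\M,\tilde g)$. The same Kobayashi--Nomizu extension produces a diffeomorphism $F:\M\to\M$ with $F^*\tilde g=g$, and the identity-theorem argument as above yields $F^*\tilde\varphi(t)\equiv\varphi(t)$.

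The main obstacle is purely in verifying the hypotheses required to invoke the Kobayashi--Nomizu machinery: we need real analyticity of $\varphi(t)$ and $g(t)$ (supplied by Theorem \ref{mainthm-local}), completeness of both metrics (to guarantee that analytic continuation along geodesics does not run off the manifold), and simple connectivity of $\M$ (to ensure that the path-wise extension is monodromy free, so that the resulting covering map is a diffeomorphism). The conceptually non-classical step, going from an isometry to a $\GG_2$-preserving map, is harmless once real analyticity has been established, since two analytic $3$-forms agreeing on a nonempty open subset of a connected analytic manifold must agree everywhere.
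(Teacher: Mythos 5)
Your proposal is correct and follows essentially the same route as the paper, which gives no detailed argument beyond citing \cite[Corollary 6.4, p.256]{KoNo} together with Theorem \ref{mainthm-local}. Your expansion — local $\GG_2$-equivalences are local isometries, the Kobayashi--Nomizu extension theorem for complete simply connected real analytic Riemannian manifolds produces the global isometry, and the identity theorem for real analytic tensor fields upgrades it to a $\GG_2$-structure equivalence — is exactly the "modification" the paper alludes to.
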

\begin{corr}
Suppose that $M^7$ is connected and simply-connected and $(\varphi,X,\lambda)$ and $(\tilde{\varphi},\tilde{X},\tilde{\lambda})$ are complete Laplacian solitons on $M$.  If $\varphi=\tilde{\varphi}$ on some connected open set $U\subset M$, then there exists a diffeomorphism $F$ of $M$ such that $F^*\tilde{\varphi}\equiv\varphi$.
\end{corr}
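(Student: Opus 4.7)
The plan is to combine the real analyticity of Laplacian solitons, furnished by the first Corollary above, with the classical Kobayashi--Nomizu extension theorem for affine maps on real analytic connected simply-connected manifolds. First, since $(\varphi,X,\lambda)$ and $(\tilde\varphi,\tilde X,\tilde\lambda)$ are Laplacian solitons, the first Corollary gives that $\varphi,\tilde\varphi$, and hence the induced metrics $g,\tilde g$, are real analytic on $M$. The hypothesis $\varphi\equiv\tilde\varphi$ on $U$ forces $g\equiv\tilde g$ on $U$, so the identity map $\mathrm{id}_U:U\to U$ is a local isometry from the real analytic Riemannian manifold $(M,g)$ to the real analytic Riemannian manifold $(M,\tilde g)$.

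Next, since $M$ is connected and simply-connected and $(M,\tilde g)$ is complete, I apply \cite[Corollary 6.4, p.~256]{KoNo} to the (real analytic) Levi-Civita connections of $g$ and $\tilde g$ to extend $\mathrm{id}_U$ to a unique affine map $F:M\to M$ satisfying $F^*\tilde g=g$. Reversing the roles of $g$ and $\tilde g$ produces a two-sided inverse, so $F$ is a diffeomorphism. As the extension is built from parallel transport and exponential maps of real analytic connections, $F$ is itself real analytic.

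Now $F^*\tilde\varphi$ is a real analytic $3$-form on $M$, and since $F|_U=\mathrm{id}_U$ we have $F^*\tilde\varphi|_U=\tilde\varphi|_U=\varphi|_U$. Connectedness of $M$ together with the identity principle for real analytic tensor fields then forces $F^*\tilde\varphi\equiv\varphi$ on all of $M$, as required.

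The main obstacle is that a $\GG_2$-preserving diffeomorphism is strictly stronger than an isometry, so the Kobayashi--Nomizu extension cannot be applied directly to the $\GG_2$-structure; moreover the two solitons live a priori in different geometries, so the previous Corollary (unique continuation for the flow, which extends local $\GG_2$-diffeomorphisms of a single $\varphi(t)$) does not apply verbatim. I sidestep both issues by extending in the weaker category of isometries of real analytic metrics---where \cite{KoNo} applies essentially verbatim---and then using real analyticity to upgrade metric preservation to $\GG_2$-structure preservation. Completeness and simply-connectedness are used only in the metric extension step, while real analyticity of the solitons is crucial both for that step (to invoke \cite{KoNo}) and for the concluding identity-principle argument.
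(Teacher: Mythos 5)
Your proof is correct and follows essentially the route the paper has in mind: the paper only sketches this corollary as ``modify the argument of \cite[Corollary 6.4, p.256]{KoNo}'' using the real analyticity supplied by Theorem \ref{mainthm-local}, and your argument is a clean instantiation of exactly that. The one point worth noting is that where the paper suggests rerunning the Kobayashi--Nomizu continuation argument with the $\GG_2$ structure carried along, you instead invoke their Corollary 6.4 as a black box at the level of the induced analytic metrics $g,\tilde g$ (using completeness and simple connectedness only there) and then upgrade $F^*\tilde g=g$ to $F^*\tilde\varphi=\varphi$ by the identity principle for real analytic tensor fields on the connected manifold $M$; this is a legitimate and arguably tidier way to perform the ``modification,'' since all the required analyticity (of $g$, $\tilde g$, $\varphi$, $\tilde\varphi$, and of the extended isometry $F$ itself) is already guaranteed by the earlier corollary on solitons.
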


Since a $\GG_2$ structure $\varphi$ determines a unique metric $g_{\varphi}$, any diffeomorphism $F: (M, \varphi)\ra (M,\tilde{\varphi})$ such that $F^*\tilde{\varphi}=\varphi$ is an isometry between $(M, g_{\varphi})$ and $(M,g_{\tilde{\varphi}})$. The converse is clearly not always true, since the $\GG_2$ structure encodes strictly more information than the metric.

Our approach to prove Theorem \ref{mainthm-local} is similar to Bando's \cite{Bando} proof of the real analyticity of Ricci flow, namely to use derivative estimates for the Riemann curvature tensor $Rm$, the torsion tensor $T$ and $\varphi$ along the flow.  In our previous paper \cite{Lotay-Wei-1}, we derived Shi-type derivative estimates along the Laplacian flow, which take the form
\begin{equation}\label{shi-0}
  t^{\frac k2}\left(|\nabla^kRm(x,t)|+|\nabla^{k+1}T(x,t)|\right)\leq C_kK,\quad x\in\M,\,t\in [0, 1/K],
\end{equation}
where $C_k$ is a constant depending on the order $k$ and $K$ is the bound on
 \begin{equation}\label{lambda-eq}
  \Lambda(x,t)=(|Rm|^2(x,t)+|\nabla T|^2(x,t))^{1/2}.
 \end{equation}
However, in \cite{Lotay-Wei-1}, we do not analyze how $C_k$ depends on $k$, which is particularly relevant when $k$ is large.

When one applies the heat operator to $ |\nabla^kRm(x,t)|+|\nabla^{k+1}T(x,t)|$,  lower order terms are generated during the computation, and the number of these terms grows with the order $k$ of differentiation, which then contributes to the growth of the constants $C_k$. By showing that the $C_k$ are of sufficiently slow growth in the order $k$, we may deduce that the $\GG_2$ structure $\varphi(t)$ and associated metric $g(t)$ are real analytic at each fixed time $t>0$.  The key step is to revisit the derivation of the derivative estimates \eqref{shi-0} from \cite{Lotay-Wei-1} and obtain the following much more refined estimates:
\begin{equation}\label{main-deri-1}
  \sum_{k=0}^n\frac {t^k}{(k+1)!^2}\left(|\nabla^kRm|^2(x,t)+|\nabla^{k+2}\varphi|^2(x,t)\right)\leq C(T_0,K_0)
\end{equation}
on $\M\times [0, \alpha/K_0]$ for all $n\in \mathbb{N}$ (we assume $\mathbb{N}$ to include $0$), where $K_0=\sup_\M|\Lambda(x,0)|$, $\alpha, C(T_0,K_0)$ are constants. As we will see in \S \ref{sec:real-anal}, the estimate \eqref{main-deri-1}  leads to the real analyticity of $(\M,\varphi(t),g(t))$ for each time $t>0$.

\section{Preliminaries} \label{sec:prelim}

We collect some facts on closed $\GG_2$ structures, mainly based on \cite{Lotay-Wei-1,bryant2005, Kar}.

Let $\{e_1,e_2,\cdots,e_7\}$ be the standard basis of $\R^7$ and let $\{e^1,e^2,\cdots,e^7\}$ be its dual basis. For simplicity we write $e^{ijk}=e^i\wedge e^j\wedge e^k$  and define a $3$-form $\phi$ by:
\begin{equation*}
  \phi=e^{123}+e^{145}+e^{167}+e^{246}-e^{257}-e^{347}-e^{356}.
\end{equation*}
The subgroup of $\GL(7,\R)$ fixing $\phi$ is the exceptional Lie group $\GG_2$, which is a compact, connected, simple Lie subgroup of $\SO(7)$ of dimension $14$.
It is well-known that $\GG_2$ acts irreducibly on $\R^7$ and preserves the metric and orientation for which $\{e_1,e_2,\cdots,e_7\}$ is an oriented orthonormal basis.

Let $\M$ be a $7$-manifold. For $x\in \M$ we let
\begin{equation*}
  \Lambda^3_+(\M)_x=\{\varphi_x\in\Lambda^3T_x^*\M\,|\,\exists u\in\textrm{Hom}(T_x\M,\R^7), u^*\phi=\varphi_x\}.
\end{equation*}
The bundle $\Lambda^3_+(\M)=\bigsqcup_x \Lambda^3_+(\M)_x$ is 
an open subbundle of $\Lambda^3T^*\M$.  We call a section $\varphi$ of $\Lambda^3_+(\M)$ a $\GG_2$ structure on $\M$ and denote the space of $\GG_2$ structures on $\M$ by $\Omega_+^3(\M)$.  The notation is motivated by the fact that there  is a 1-1 correspondence between $\GG_2$ structures in the sense of subbundles of the frame bundle and $\Omega^3_+(M)$.
The bundle $\Lambda^3_+(\M)$ has sections, which means that $\GG_2$ structures exist, if and only if $\M$ is oriented and spin.

A $\GG_2$ structure $\varphi$ induces a unique metric $g$ and orientation (given by a volume form $\vol_g$ of $g$) which satisfy
\begin{equation*}
  g(u,v)\vol_g=\frac 16(u\lrcorner\varphi)\wedge(v\lrcorner\varphi)\wedge\varphi.
\end{equation*}
 The metric and orientation determine the Hodge star operator $*_{\varphi}$, so we can define $\psi=*_{\varphi}\varphi$.
Notice that the relationship between $g$ and $\varphi$, and hence between $\psi$ and $\varphi$, is nonlinear.

Although $\GG_2$ acts irreducibly on $\R^7$ (and hence on $\Lambda^1(\R^7)^*$ and $\Lambda^6(\R^7)^*$), it acts reducibly on $\Lambda^k(\R^7)^*$ for $2\leq k\leq 5$. Hence a $\GG_2$ structure $\varphi$ induces splittings of the bundles $\Lambda^kT^*\M$ ($2\leq k\leq 5$), which we denote by $\Lambda^k_l(T^*\M)$ so that $l$ indicates the rank of the bundle, and we let the space of sections of $\Lambda^k_l(T^*\M)$ be $\Omega^k_l(\M)$.  Explicitly, we have that
\begin{align*}
  \Omega^2(\M)= & \Omega^2_7(\M)\oplus\Omega^2_{14}(\M) \quad \text{and} \quad
   \Omega^3(\M)= \Omega^3_1(\M)\oplus\Omega^3_7(\M)\oplus \Omega^3_{27}(\M),
\end{align*}
where (using the orientation in \cite{bryant2005} rather than \cite{Kar})
\begin{align*}
  \Omega^2_7(\M) &= \{\beta\in\Omega^2(\M)\,|\,\beta\wedge\varphi=2*_{\varphi}\beta\} =\{X\lrcorner\varphi\,|\,X\in C^{\infty}(T\M)\},\\
  \Omega^2_{14}(\M) &= \{\beta\in\Omega^2(\M)\,|\,\beta\wedge\varphi=-*_{\varphi}\beta\} =\{\beta\in\Omega^2(\M)\,|\,\beta\wedge\psi=0\},
\end{align*}
and
\begin{gather*}
  \Omega^3_1(\M)=\{f\varphi\,|\,f\in C^{\infty}(\M)\},\quad \Omega^3_7(\M)=\{X\lrcorner\psi\,|\,X\in C^{\infty}(T\M)\},\\
  \Omega^3_{27}(\M)=\{\gamma\in\Omega^3(\M)\,|\,\gamma\wedge\varphi=0=\gamma\wedge\psi\}.
\end{gather*}
Hodge duality gives corresponding decompositions of $\Omega^4(\M)$ and $\Omega^5(\M)$.

In our study it is convenient to write key quantities with respect to local coordinates $\{x^1,\cdots,x^7\}$ on $\M$. We write a $k$-form $\alpha$ locally as
\begin{equation*}
  \alpha=\frac 1{k!}\alpha_{i_1i_2\cdots i_k}dx^{i_1}\wedge\cdots\wedge dx^{i_k},
\end{equation*}
where $\alpha_{i_1i_2\cdots i_k}$ is totally skew-symmetric in its indices.
In particular, we write $\varphi$ and $\psi$ locally as
\begin{equation*}
  \varphi=\frac 16\varphi_{ijk}dx^i\wedge dx^j\wedge dx^k,\quad \psi=\frac 1{24}\psi_{ijkl}dx^i\wedge dx^j\wedge dx^k\wedge dx^l.
\end{equation*}

As in \cite{bryant2005} (up to a constant factor),  we define an operator
 $i_{\varphi}: S^2T^*\M\ra \Lambda^3T^*\M$ locally by
\begin{align*}
  (i_{\varphi}(h))_{ijk}&=h^l_i\varphi_{ljk}-h_j^l\varphi_{lik}-h_k^l\varphi_{lji}
\end{align*}
where $h=h_{ij}dx^idx^j$. Then $\Lambda^3_{27}(T^*\M)=i_{\varphi}(S^2_0T^*\M)$, where $S^2_0T^*\M$ denotes the bundle of trace-free symmetric $2$-tensors on $\M$, and $i_{\varphi}(g)=3\varphi$.

We have contraction identities for $\varphi$ and $\psi$ in index notation (see \cite{bryant2005,Kar}):
\begin{align}
  \varphi_{ijk}\varphi_{abl}g^{ia}g^{jb} &= 6g_{kl},\label{contr-iden-1}
\\
  \varphi_{ijq}\psi_{abkl}g^{ia}g^{jb}&= 4\varphi_{qkl},\label{contr-iden-2}
\displaybreak[0]\\
  \varphi_{ipq}\varphi_{ajk}g^{ia}&=g_{pj}g_{qk}-g_{pk}g_{qj}+\psi_{pqjk},\label{contr-iden-3}
\\
  \varphi_{ipq}\psi_{ajkl}g^{ia}&=g_{pj}\varphi_{qkl}-g_{jq}\varphi_{pkl}+g_{pk}\varphi_{jql}\nonumber\\
  &\qquad -g_{kq}\varphi_{jpl}+g_{pl}\varphi_{jkq}-g_{lq}\varphi_{jkp}.\label{contr-iden-4}
\end{align}

Given any $\GG_2$ structure $\varphi\in\Omega^3_+(\M)$, there exist unique differential forms (called the intrinsic torsion forms) $\tau_0\in\Omega^0(\M), \tau_1\in\Omega^1(\M), \tau_{2}\in\Omega^2_{14}(\M)$ and $\tau_{3}\in\Omega^3_{27}(\M)$ such that $d\varphi$ and $d\psi$ can be expressed as follows (see \cite{bryant2005}):
\begin{align*}
  d\varphi = \tau_0\psi+3\tau_1\wedge\varphi+*_{\varphi}\tau_{3}\quad\text{and}\quad
  d\psi =  4\tau_1\wedge\psi+\tau_{2}\wedge\varphi.
\end{align*}

We shall only consider closed $\GG_2$ structures $\varphi$ in this article.  In this case $d\varphi=0$ forces $\tau_0=\tau_1=\tau_3=0$, and hence the only
non-zero torsion form is $\tau_2$.  We therefore from now on set $\tau=\tau_2\in\Omega^2_{14}(\M)$ and reiterate that
\begin{equation*}
d\varphi=0\quad\text{and}\quad d\psi=\tau\wedge\varphi=-\!*_\varphi\!\tau.
\end{equation*}
We see immediately that
\begin{equation*}
  d^*\tau=*_{\varphi}d*_{\varphi}\tau 
=0,
\end{equation*}
which is given in local coordinates by $g^{mi}\nabla_m\tau_{ij}=0$.

The full torsion tensor is a $2$-tensor $T$ satisfying (see \cite{Kar})
\begin{gather} \label{nabla-var}
  \nabla_i\varphi_{jkl} =T_i^{\,\,m}\psi_{mjkl},\qquad
  T_i^{\,\,j}=\frac 1{24}\nabla_i\varphi_{lmn}\psi^{jlmn},\\
\label{nabla-psi}
  \nabla_m\psi_{ijkl} =-\Big( T_{mi}\varphi_{jkl}-T_{mj}\varphi_{ikl} -T_{mk}\varphi_{jil}-T_{ml}\varphi_{jki}\Big),
\end{gather}
where $T_{ij}=T(\pt_i,\pt_j)$ and $T_i^{\,\,j}=T_{ik}g^{jk}$.
In our setting we may compute that
\begin{equation*}
T=-\frac{1}{2}\tau,
\end{equation*}
so $T$ is divergence-free as $d^*\tau=0$.

Given these formulae we can compute the Hodge Laplacian of $\varphi$, which is the velocity of the Laplacian flow, as in \cite{bryant2005,Lotay-Wei-1}.

\begin{prop}\label{prop-hdl-var}
For a closed $\GG_2$ structure $\varphi$, the Hodge Laplacian of $\varphi$ satisfies
\begin{equation*}
  \Delta_{\varphi}\varphi=d\tau=i_{\varphi}(h)~\in~\Omega^3_1(\M)\oplus\Omega^3_{27}(\M),
\end{equation*}
where $h$ is a symmetric $2$-tensor on $\M$, locally given by
\begin{equation}\label{h-tensor-0}
h_{ij}=-\nabla_mT_{ni}\varphi_{j}^{\,\,mn}-\frac 13|T|^2g_{ij}-T_i^{\,\,l}T_{lj}.
\end{equation}
\end{prop}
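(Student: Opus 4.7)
The plan is to first reduce $\Delta_\varphi\varphi$ to $d\tau$ via the closedness of $\varphi$, and then to compute $d\tau$ explicitly and identify it with $i_\varphi(h)$.

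Since $d\varphi=0$, the Hodge Laplacian reduces to $\Delta_\varphi\varphi = dd^*\varphi$. On a 7-manifold, $d^* = -*d*$ on 3-forms, and $*\varphi=\psi$, so $d^*\varphi = -*d\psi$. Using the identity $d\psi = -*_\varphi\tau$ recorded in the excerpt, together with the fact that $*^2=\mathrm{id}$ on 2-forms in dimension 7, one obtains $d^*\varphi=\tau$ and therefore $\Delta_\varphi\varphi = d\tau$.

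To identify $d\tau$ with $i_\varphi(h)$, I would work in local coordinates. Write $(d\tau)_{ijk} = \nabla_i\tau_{jk}+\nabla_j\tau_{ki}+\nabla_k\tau_{ij}$, substitute $\tau=-2T$, and contract with $\varphi_l{}^{jk}$ to produce a 2-tensor on $M$. Since the algebraic map $i_\varphi : S^2T^*M \to \Lambda^3_1(T^*M) \oplus \Lambda^3_{27}(T^*M)$ is an isomorphism (with $i_\varphi(g)=3\varphi$), verifying simultaneously that $d\tau\in\Omega^3_1\oplus\Omega^3_{27}$ and that its preimage equals $h$ reduces to this single local calculation. Two specific tricks enter. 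First, the trace term $-\tfrac{1}{3}|T|^2 g_{ij}$ arises by differentiating the identity $T_{jk}\varphi^{jk}{}_l = 0$ (which encodes $\tau\in\Omega^2_{14}$): this gives $(\nabla_i T_{jk})\varphi^{jk}{}_l = -T_{jk}\nabla_i\varphi^{jk}{}_l$, and formula \eqref{nabla-var} converts the right-hand side into a $T\cdot T$ expression through a $\psi$-contraction which is simplified via \eqref{contr-iden-4}. Second, the cross term $-T_i{}^l T_{lj}$ emerges from simplifying $\varphi\psi$-contractions in the remaining summands of $d\tau$, again via \eqref{contr-iden-3}--\eqref{contr-iden-4}. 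The antisymmetric part of the resulting 2-tensor must vanish, which uses the divergence-free condition $\nabla^m T_{mj}=0$ (equivalent to $d^*\tau=0$).

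The main obstacle is the careful bookkeeping of signs and numerical coefficients. The three cyclically arranged summands of $d\tau$, each contracted with $\varphi$ and simplified via the double-contraction identities, generate many individual terms that must recombine exactly into $-\nabla_m T_{ni}\varphi_j{}^{mn} - \tfrac{1}{3}|T|^2 g_{ij} - T_i{}^l T_{lj}$. The nontrivial structural content of the proposition---that $d\tau$ has no $\Omega^3_7$-component---is encoded in the antisymmetric part of the contracted tensor vanishing, so that the final tensor $h$ is automatically symmetric and lies in $S^2T^*M$ as asserted.
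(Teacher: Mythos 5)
Your outline is correct and follows the standard route: the paper itself states this proposition without proof, citing Bryant and the authors' earlier paper, and the computation carried out there is exactly the one you describe (reduce to $dd^*\varphi$, identify $d^*\varphi=\tau$ via $d\psi=-*_\varphi\tau$, then match $d\tau$ against $i_\varphi(h)$ by contracting with $\varphi$ and using \eqref{nabla-var} together with the contraction identities and the $\Omega^2_{14}$ conditions on $T$). One small simplification worth noting: the absence of an $\Omega^3_7$-component of $d\tau$, which you propose to extract from the vanishing of the antisymmetric part of the contracted tensor using $\nabla^mT_{mj}=0$, follows immediately from $d\tau\wedge\varphi=d(\tau\wedge\varphi)=d(d\psi)=0$, so only the symmetric part of your contraction actually needs to be computed.
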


Since  $\varphi$ determines a unique metric $g$ on $\M$, we then have the Riemann curvature tensor $Rm$ of $g$ on $\M$, which in our convention is given by
\begin{equation*}
  Rm(X,Y,Z,W)=g(\nabla_X\nabla_YZ-\nabla_Y\nabla_XZ-\nabla_{[X,Y]}Z,W)
\end{equation*}
for vector fields $X,Y,Z,W$ on $\M$. In local coordinates, we denote the components of $Rm$ by $R_{ijkl}=Rm(\pt_i,\pt_j,\pt_k,\pt_l)$. The Ricci curvature
 $Rc$ and scalar curvature $R$ are given locally by $R_{ik}=g^{jl}R_{ijkl}$ and $R=g^{ij}R_{ij}$,
and may be computed in terms of the torsion tensor as follows (see e.g.~\cite{Lotay-Wei-1}).
\begin{prop}\label{Ric-prop}
The Ricci tensor and the scalar curvature of the associated metric $g$ of a closed $\GG_2$ structure $\varphi$ are given as
\begin{gather}\label{Ricc-prop-closed}
R_{ik}=\nabla_jT_{li}\varphi_k^{\,\,\,jl}-T_{i}^{\,\,j}T_{jk}\quad\text{and}\quad
  R=-|T|^2=-T_{ik}T_{jl}g^{ij}g^{kl}.
\end{gather}
\end{prop}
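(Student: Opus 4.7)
The plan is to equate two expressions for the commutator $[\nabla_a,\nabla_i]\varphi_{jkl}$. Differentiating the torsion identity (\ref{nabla-var}) and then substituting (\ref{nabla-psi}) for $\nabla\psi$ yields
\[
\nabla_a\nabla_i\varphi_{jkl} = (\nabla_aT_i{}^m)\psi_{mjkl} - T_i{}^m\bigl(T_{am}\varphi_{jkl} - T_{aj}\varphi_{mkl} - T_{ak}\varphi_{jml} - T_{al}\varphi_{jkm}\bigr),
\]
and antisymmetrizing in $(a,i)$ produces $[\nabla_a,\nabla_i]\varphi_{jkl}$ as an explicit polynomial in $\nabla T$, $T\otimes T$, $\varphi$ and $\psi$. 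On the other hand, the standard Ricci identity reads
\[
[\nabla_a,\nabla_i]\varphi_{jkl} = -R_{aij}{}^p\varphi_{pkl} - R_{aik}{}^p\varphi_{jpl} - R_{ail}{}^p\varphi_{jkp}.
\]
Equating these gives a master identity coupling $R$, $\nabla T$, and $T\otimes T$ through $\varphi$ and $\psi$.

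To extract the Ricci tensor from this master identity, I would contract against a well-chosen tensor built from $\varphi$ or $\psi$, then collapse the resulting $\varphi\cdot\varphi$, $\varphi\cdot\psi$, and $\psi\cdot\psi$ products via (\ref{contr-iden-1})--(\ref{contr-iden-4}). The three curvature terms on the left, after invoking the first Bianchi identity and the standard symmetries of $R$, reduce to an expression in which $R_{ik}$ appears linearly (with the remaining Riemann-only contributions either cancelling or being removable by a further contraction). The right-hand side reorganizes precisely into a multiple of $\nabla_j T_{li}\varphi_k{}^{jl} - T_i{}^{j}T_{jk}$, and dividing by the universal constant that appears gives the formula for $R_{ik}$.

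For the scalar curvature I trace with $g^{ik}$. The quadratic term gives $-g^{ik}T_i{}^jT_{jk} = -T^{kj}T_{jk} = +|T|^2$, since $T = -\tau/2$ is antisymmetric. The remaining contribution equals $\nabla_j T_{li}\varphi^{ijl}$, which only detects the totally antisymmetric part of $\nabla T$, namely a multiple of $dT$ paired with $\varphi$. Using $T=-\tau/2$ together with Proposition~\ref{prop-hdl-var}, the pairing $\langle d\tau,\varphi\rangle$ is proportional to the trace of $h$, and the expression (\ref{h-tensor-0}) for $h$ shows that $\operatorname{tr}(h)$ itself is a linear combination of the same $\nabla_j T_{li}\varphi^{ijl}$ and $|T|^2$. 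This yields a closed linear equation whose solution is $\nabla_j T_{li}\varphi^{ijl}=-2|T|^2$, and hence $R=-|T|^2$.

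The main obstacle is the contraction step for the Ricci tensor: among the three curvature terms $R_{ai\bullet}{}^p\varphi_{\bullet\bullet\bullet}$, one must select a contraction that cleanly projects onto $R_{ik}$ without dragging in Riemann components that cannot be re-expressed through Ricci alone. The tensor algebra encoded in (\ref{contr-iden-1})--(\ref{contr-iden-4}) and the first Bianchi identity make this feasible, but the sign and index bookkeeping, particularly when $\psi\cdot\psi$ products are re-expanded through $g$ and $\psi$, is the real combinatorial work of the argument.
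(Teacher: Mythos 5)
Your plan is exactly the standard derivation used in the cited reference \cite{Lotay-Wei-1} (this paper itself offers no proof, only the citation): differentiate $\nabla\varphi=T\cdot\psi$, antisymmetrize, compare with the Ricci identity to get the ``$\GG_2$ Bianchi identity'', then contract with $\psi$ and $\varphi$ via \eqref{contr-iden-1}--\eqref{contr-iden-4}, the crucial point being that the leftover $Rm*\psi$ contraction vanishes by the first Bianchi identity and total antisymmetry of $\psi$ --- which you correctly identify. The only caveat is that you leave the sign/coefficient bookkeeping unexecuted, but every step you describe does go through as stated.
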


Notice that $Rm$ and $\nabla T$ are both second order in $\varphi$, and $T$ is essentially $\nabla\varphi$, so
we might expect $Rm$ and $\nabla T$ to be related. The next proposition from \cite{Lotay-Wei-1} says that $\nabla T$ can be expressed using $T$ and $Rm$.
\begin{prop}\label{prop-nabla-T}
For a closed $\GG_2$ structure $\varphi$, we have
\begin{align*}
 2\nabla_iT_{jk} ~=&~ \frac 12R_{ijmn}\varphi_k^{\,\,\,mn}+\frac 12R_{kjmn} \varphi_i^{\,\,\,mn}-\frac 12R_{ikmn}\varphi_j^{\,\,\,mn}\\
 &~~-T_{im}T_{jn}\varphi_k^{\,\,\,mn} -T_{km}T_{jn}\varphi_i^{\,\,\,mn}+T_{im}T_{kn}\varphi_j^{\,\,\,mn}.
\end{align*}
\end{prop}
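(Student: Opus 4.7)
The approach I would take mirrors the derivation of such $\nabla T$--$Rm$ identities in the $\GG_2$-flow literature: differentiate \eqref{nabla-var}, apply the Ricci identity on the left and substitute \eqref{nabla-psi} on the right, then contract to isolate $\nabla T$. Concretely, applying $\nabla_p$ to \eqref{nabla-var} and subtracting the same equation with $p,i$ swapped yields
\begin{equation*}
(\nabla_pT_i^{\,m}-\nabla_iT_p^{\,m})\psi_{mjkl} = [\nabla_p,\nabla_i]\varphi_{jkl} - T_i^{\,m}\nabla_p\psi_{mjkl} + T_p^{\,m}\nabla_i\psi_{mjkl},
\end{equation*}
where the commutator expands as $-R_{pij}{}^n\varphi_{nkl}-R_{pik}{}^n\varphi_{jnl}-R_{pil}{}^n\varphi_{jkn}$ and the remaining $\nabla\psi$ pieces are replaced by $T\otimes T\otimes\varphi$ terms via \eqref{nabla-psi}.

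I would next contract this identity with $\psi^{jklq}$. Using the contraction identities \eqref{contr-iden-1}--\eqref{contr-iden-4} (in particular a standard three-index $\psi$--$\psi$ contraction of the form $\psi_{mjkl}\psi^{jklq}\propto\delta_m^q$), the left-hand side reduces to a multiple of $\nabla_pT_{iq}-\nabla_iT_{pq}$. Each Riemann term $R_{pi\bullet}{}^n\varphi_{\bullet\bullet\bullet}\psi^{jklq}$ on the right simplifies, via \eqref{contr-iden-2} and \eqref{contr-iden-3}, to an expression proportional to $R_{pimn}\varphi_q^{\,\,mn}$; a careful application of Proposition \ref{Ric-prop} and the antisymmetry $T_{ij}=-T_{ji}$ absorbs or cancels any parasitic Ricci-curvature pieces that appear. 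The $T\otimes T$ contractions collapse similarly to $T_{pm}T_{in}\varphi_q^{\,\,mn}$-type expressions.

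Finally, rewriting the resulting relation using $T_{ac}=-T_{ca}$ in the form $\nabla_aT_{bc}+\nabla_bT_{ca} = \tfrac12 R_{abmn}\varphi_c^{\,\,mn}+\cdots$ and taking three cyclic permutations of $(a,b,c)=(i,j,k)$, I would combine them with signs $+,-,+$: the derivatives collapse to $2\nabla_iT_{jk}$, and the three $Rm\cdot\varphi$ terms, after applying $R_{abcd}=-R_{bacd}$, line up exactly with the combination $\tfrac12R_{ijmn}\varphi_k^{\,\,mn}+\tfrac12R_{kjmn}\varphi_i^{\,\,mn}-\tfrac12R_{ikmn}\varphi_j^{\,\,mn}$ appearing in the statement; the $T$-quadratic pieces combine in exactly the parallel way.

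The principal obstacle is the index bookkeeping in the $\psi$--$\psi$ and $\psi$--$\varphi$ contractions: these routinely generate both $\varphi$-type and Ricci-type outputs, and one must argue carefully that the Ricci pieces cancel by virtue of \eqref{Ricc-prop-closed} and the skew-symmetry of $T$ so that only the clean $Rm\cdot\varphi$ form remains. Once the single commutator-based identity for $\nabla_pT_{iq}-\nabla_iT_{pq}$ is established, the cyclic-recombination step yielding the final formula is essentially algebraic.
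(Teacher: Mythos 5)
The paper offers no proof of this proposition, simply quoting it from \cite{Lotay-Wei-1}, and your proposal correctly reconstructs the standard argument given there: derive the ``$\GG_2$-Bianchi identity'' for $\nabla_iT_{jk}-\nabla_jT_{ik}$ by antisymmetrizing the covariant derivative of \eqref{nabla-var}, using the Ricci identity and \eqref{nabla-psi}, contracting with $\psi^{jklq}$ via $\psi_{mjkl}\psi^{qjkl}=24\delta_m^{\,q}$, and then taking the signed cyclic sum, which indeed collapses to $2\nabla_iT_{jk}$ with exactly the stated right-hand side once the skew-symmetry of $T$ and $\varphi$ is used to match the quadratic torsion terms. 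The only minor imprecision is your concern about ``parasitic Ricci-curvature pieces'': the relevant two-index $\varphi$--$\psi$ contraction \eqref{contr-iden-2} yields pure $\varphi$-terms with no metric summands, so no Ricci terms arise and no appeal to Proposition \ref{Ric-prop} is needed.
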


\section{Criterion for a \texorpdfstring{$\GG_2$}{G2} structure to be real analytic}\label{sec:real-anal}

Given a $7$-manifold $\M$, a real analytic structure on $\M$ is an atlas
\begin{equation*}
  \left\{(U_{j}, \{x_{j}^i\}_{i=1}^7)\right\}_{j\in J},
\end{equation*}
where $J$ is some indexing set, such that the transition functions are real analytic.  A Riemannian metric $g$ on a real analytic manifold $\M$ is then real analytic if the components $g_{ij}$ of $g$ are real analytic functions with respect to a subatlas of real analytic coordinates.

Let $\M$ be an orientable and spinnable $7$-manifold, let $\varphi$ be a $\GG_2$ structure on $\M$ and let $g$ be its associated Riemannian metric.
Suppose further that there is a subatlas of normal coordinate systems on $M$ such that the components of $g$ are real analytic functions in each of these
coordinate systems.
By \cite[Lemma 13.20]{Chow-etal-2008}, $(\M,g)$ is then a real analytic Riemannian manifold with respect to this subatlas and, in particular, an atlas for $\M$ can be found
with real analytic transition functions. In fact, by \cite[Lemma 1.2 \& Theorem 2.1]{DeT-Kaz1981}, for such $(\M,g)$ there exists an atlas of harmonic
coordinates  which are real analytic functions of the normal coordinates and so that the metric $g$ is real analytic in these harmonic coordinates.
Real analyticity of the transition functions for the atlas of harmonic coordinates then follows from the fact that the coordinates are harmonic and the metric is
 real analytic. If in addition the components $\varphi_{ijk}$ of $\varphi$ are real analytic with respect to the normal coordinates, which implies that $\varphi$ is also real analytic in the harmonic coordinates by \cite[Corollary 1.4]{DeT-Kaz1981}, then we say that $(\M,\varphi)$ is real analytic.

Let $\inj_g(p)$ denote the injectivity radius of $g$ at $p\in\M$, and if $\{x^i\}_{i=1}^7$ are coordinates centred at $p$ we denote the Christoffel symbols of the Levi-Civita connection of $g$ by $\Gamma^l_{ij}$ as usual and let
$\pt^k=\sum_{k_1+\ldots+k_n=k}\frac{\pt^k}{(\pt x^1)^{k_1}\cdots (\pt x^n)^{k_n}}$. With this notation in hand, we have the following derivative estimates of $\varphi$ and $g$ in normal coordinates.
\begin{lem}\label{lem-3-1}
Let $\varphi$ be a $\GG_2$ structure 
on $\M$ and let $g$ be its associated metric.  Let $p\in M$ and suppose there exist constants $C_1$ and $r>0$ such that
\begin{equation}\label{lem-3-1-condi1}
  |\nabla^kRm|(x)+|\nabla^{k+2}\varphi|(x)\leq C_1k!r^{-k-2}
\end{equation}
in a geodesic ball $B(p,r)$ for all $k\in \mathbb{N}$.
 
There exist constants $C_2, C_3, C_4, r_1=r_1(r), r_2=r_2(r)$ such that if we set $\rho=\min\{\frac{C_2}{\sqrt{C_1}}r, \inj_g(p)\}$ then, for all $x\in B(p,\rho)$ and $k\in\mathbb{N}$ we have in normal coordinates centred at $p$:
\begin{gather}
  \frac 12\delta_{ij} \leq  g_{ij}(x)\leq 2\delta_{ij},\qquad |\pt^kg_{ij}|(x)\leq C_3k!r_1^{-k}   \label{lem-3-1est0}\\
|\pt^{k}\Gamma_{ij}^l |(x)\leq C_3k!r_1^{-k-1},\quad |\pt^k\varphi_{ijl}|(x)\leq C_4k!r_2^{-k}.\label{lem-3-1est}
\end{gather} 
\end{lem}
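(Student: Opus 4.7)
The plan is to combine classical real-analyticity arguments for Riemannian metrics under curvature bounds (following Kazdan and DeTurck--Kazdan \cite{DeT-Kaz1981}, and as used by Bando \cite{Bando} for Ricci flow) with a direct conversion of the covariant derivative estimates on $\varphi$ into coordinate derivative estimates. The curvature bound from the hypothesis yields the estimates on $g_{ij}$ and $\Gamma^l_{ij}$ via standard comparison geometry plus an induction on $k$; the estimate on $\partial^k\varphi$ then follows by writing $\partial^k\varphi$ in terms of $\nabla^k\varphi$ plus correction terms controlled by the already-obtained bounds on $\partial^a\Gamma$ for $a<k$.

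For the $C^0$ bound on $g$, first apply the hypothesis with $k=0$ to get $|Rm|(x)\leq C_1/r^2$ in $B(p,r)$; a standard Hessian (Rauch) comparison argument in normal coordinates then gives $\tfrac12\delta_{ij}\leq g_{ij}\leq 2\delta_{ij}$ on the ball of radius $\rho=\min\{C_2 r/\sqrt{C_1},\inj_g(p)\}$ for a universal constant $C_2$. For the derivative bounds on $g_{ij}$ and $\Gamma^l_{ij}$ I then use induction on $k$, based on three ingredients: the identity $\partial_l g_{ij}=\Gamma^m_{li}g_{mj}+\Gamma^m_{lj}g_{mi}$, which converts bounds on $\partial^{k-1}\Gamma$ into bounds on $\partial^k g$; the curvature formula $R^l{}_{ijm}=\partial_i\Gamma^l_{jm}-\partial_j\Gamma^l_{im}+\Gamma\ast\Gamma$ combined with the normal-coordinate identities (such as $x^ix^j\Gamma^l_{ij}(x)=0$), which make the relevant linear system for $\partial\Gamma$ solvable from $Rm$ modulo lower-order quantities; and the schematic coordinate expansion of $\nabla^{k-1}Rm$ as $\partial^{k-1}Rm$ plus a sum of products $\nabla^aRm\ast\partial^b\Gamma$ with $a+b\leq k-2$. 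The inductive hypothesis bounds the lower-order contributions, and a combinatorial count shows the result is majorised by $C_3\,k!\,r_1^{-k}$ for a suitable $r_1=r_1(r)$.

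For the estimate on $\partial^k\varphi_{ijl}$, I use the analogous coordinate expansion of $\nabla^k\varphi$ as $\partial^k\varphi$ plus a schematic sum of terms $\partial^a\Gamma\ast\partial^b\varphi$ with $a\leq k-1$ and $b\leq k-1$. Solving inductively for $\partial^k\varphi$ using the hypothesis $|\nabla^{k+2}\varphi|\leq C_1\,k!\,r^{-k-2}$ together with the bounds on $\partial^a\Gamma$ and $\partial^b\varphi$ already obtained for smaller indices yields the desired bound $C_4\,k!\,r_2^{-k}$ for suitable $r_2=r_2(r)$.

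The main obstacle is the combinatorial bookkeeping: when the identities above are iterated, the number of lower-order terms grows rapidly, and one must verify that the factorial bounds of the inductive hypothesis combine at the next step to give a factorial bound rather than something that grows faster. This is the classical Cauchy majorant step, handled by choosing $r_1$ and $r_2$ sufficiently small relative to $r$ so that absorbing the Leibniz-type combinatorial sums $\sum_{a+b=k}\binom{k}{a}a!\,b!=(k+1)\,k!$ along with the various geometric factors still leaves the desired $k!$-type growth intact, exactly as in the analyticity arguments of \cite{Bando,DeT-Kaz1981}.
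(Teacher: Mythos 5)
Your proposal is correct and follows essentially the same route as the paper: the bounds on $g_{ij}$ and $\Gamma^l_{ij}$ are quoted from the standard curvature-to-metric estimates (Hamilton, Chow et al.), and the bound on $\pt^k\varphi_{ijl}$ is obtained by an induction converting covariant into coordinate derivatives, absorbing the Leibniz combinatorics by taking $r_2$ small relative to $r_1$. The only organizational difference is that the paper runs a double induction on the mixed quantities $\pt^l\nabla^{k-l}\varphi$ with an extra weight $2^{-(k-l)}$, which peels off one coordinate derivative at a time so that only a single $\Gamma$ factor appears per step, whereas your full expansion of $\nabla^k\varphi$ generates products of several $\pt^a\Gamma$ factors and hence slightly heavier (but still standard) majorant bookkeeping.
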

\proof
The assumption \eqref{lem-3-1-condi1} implies  $|\nabla^kRm|(x)\leq C_1k!r^{-k-2}$ in $B(p,r)$.
The proof of  \cite[Corollary 4.12]{ha95-compact}
(see also \cite[Lemma 13.31]{Chow-etal-2008}) gives the existence of constants $C_2, C_3, r_1=r_1(r)>0$ such that for any $x\in B(p,\rho)$,
where $\rho$ is as stated, we have the derivative estimates for $g_{ij}$ and $\Gamma^l_{ij}$ in \eqref{lem-3-1est0}-\eqref{lem-3-1est}
 for all $k\in \mathbb{N}$.
 Thus it remains to show that, under the assumption \eqref{lem-3-1-condi1}, there are constants $C_4$ and $r_2(r)>0$ such that for all $k\in \mathbb{N}$ we have
\begin{equation*}
  |\pt^k\varphi_{ijl}|(x)\leq C_4k!r_2^{-k}.
\end{equation*}

In the following, we will prove a slightly stronger estimate:
\begin{equation}\label{lem-3-1pf3}
  |\pt^l\nabla^{k-l}\varphi|(x)\leq C_4k!2^{-(k-l)}r_2^{-k} 
\end{equation}
for all $0\leq l\leq k\leq m$, where $k,l,m\in \mathbb{N}$.  We prove \eqref{lem-3-1pf3} by induction on $m$. The case $m=0$ of \eqref{lem-3-1pf3} is trivial as $|\varphi|^2=7$. Suppose now that $m>1$ and \eqref{lem-3-1pf3} holds for all $0\leq l\leq k\leq m-1$.  We therefore only need to deal with the case where $k=m$ and we can perform an induction on $l$. Again, the case $k=m, l=0$ is trivial if we take $r_2\leq r/2$, as the condition \eqref{lem-3-1-condi1} gives that
\begin{equation*}
  |\nabla^m\varphi|\leq C_1(m-2)!r^{-m}\leq C_1m!r^{-m}\leq C_1m!2^{-m}r_2^{-m}.
\end{equation*}
So we now suppose that \eqref{lem-3-1pf3} holds for all $0\leq l<s$ for some $s\leq k=m$ and consider the case $l=s$. Since $\nabla^{(k-s)}\varphi$ is a $(k-s+3)$-tensor, we have
\begin{align*}
 |\pt^s&\nabla^{(k-s)}\varphi(x)|= ~\biggl|\pt^{(s-1)}\biggl(\nabla^{(k-s+1)}\varphi(x)+(k-s+3)\Gamma(x)*\nabla^{(k-s)}\varphi(x)\biggr)\biggr|\\
 \leq &~ \left|\pt^{(s-1)}\nabla^{(k-s+1)}\varphi(x)\right| \\&\quad
 +(k-s+3)\sum_{i=0}^{s-1}\binom{s-1}{i}\left|\pt^i\Gamma(x)\right|\left|\pt^{(s-1-i)}\nabla^{(k-s)}\varphi(x)\right|\displaybreak[0]\\
  \leq &~ C_4k!2^{-(k-s+1)}r_2^{-k} 
  \\
  &~+(k-s+3)2^{-(k-s)}C_3C_4\sum_{i=0}^{s-1}\binom{s-1}{i}i!(k-1-i)!r_1^{-i-1}
  r_2^{-(k-i-1)}
  \displaybreak[0]\\
  \leq &~ C_4k!2^{-(k-s)}r_2^{-k} 
  \biggl(\underbrace{\frac{1}{2} +(k-s+3)C_3\sum_{i=0}^{s-1}\frac{(s-1)!}{(s-1-i)!}\frac{(k-1-i)!}{k!}\frac{r_2^{1+i}}{r_1^{1+i}}}_{I}\biggr)
\end{align*}
To estimate the term $I$ in the bracket above,  by choosing $r_2\leq r_1$ 
 we have
\begin{align*}
  I\leq & ~ \frac 12+C_3\frac{k-s+3}{k}\frac{\binom{s-1}{i}}{\binom{k-1}{i}}\frac{r_2}{r_1}~\leq ~\frac 12+4C_3\frac{r_2}{r_1},
\end{align*}
as $k\geq 1$. Thus we can choose
\begin{equation*}
  r_2=r_2(r)=\min\left\{\frac{r_1}{8C_3},r_1,\frac{r}{2}\right\}>0
\end{equation*} 
such that $I\leq 1$ and then
\begin{align*}
 |\pt^s\nabla^{(k-s)}\varphi(x)|  \leq &~ C_4k!2^{-(k-s)}r_2^{-k}. 
\end{align*}
This completes the induction. 
\endproof

It is a routine exercise to show that \eqref{lem-3-1est0} and \eqref{lem-3-1est} imply that the coefficients of $g$ and $\varphi$ are real analytic with respect to normal coordinates.  Hence, by Lemma \ref{lem-3-1},  
if we have the derivative estimates \eqref{lem-3-1-condi1} for $Rm$ and $\varphi$, then we can conclude that $(\M,\varphi,g)$ is real analytic.

\section{Laplacian flow and evolution equations}\label{sec:evlution}

The goal of this paper is to prove the real analyticity of the solution to the Laplacian flow \eqref{Lap-flow-def}. From Proposition \ref{prop-hdl-var}, \eqref{Lap-flow-def} is equivalent to
\begin{equation}\label{flow-2}
  \frac{\pt}{\pt t}\varphi(t)=i_{\varphi}(h(t)),
\end{equation}
where $h(t)$ is the symmetric $2$-tensor on $\M$ given locally in \eqref{h-tensor-0}. By \eqref{Ricc-prop-closed}, we can also write $h$ locally as
\begin{equation}\label{h-tensor-1}
  h_{ij}=-R_{ij}-\frac 13|T|^2g_{ij}-2T_i^{\,\,k}T_{kj}.
\end{equation}
Notice that $T_i^{\,\,k}=T_{il}g^{kl}$ and $T_{il}=-T_{li}$.

Throughout the remainder of the article we will use the symbol $\Delta$ to denote the ``analyst's Laplacian'', which is a non-positive operator given in local
coordinates as $\nabla^i\nabla_i$, in contrast to the Hodge Laplacian $\Delta_{\varphi}$.

Under \eqref{flow-2},
the associated metric $g(t)$ of $\varphi(t)$ evolves by
\begin{equation}\label{evl-g}
  \frac{\pt}{\pt t}g(t)=2h(t).
\end{equation}
Substituting \eqref{h-tensor-1} into this equation, we have that 
\begin{equation}\label{flow-g2}
  \frac{\pt}{\pt t}g_{ij}=-2(R_{ij}+\frac 13|T|^2g_{ij}+2T_i^{\,\,k}T_{kj}).
\end{equation} Moreover, by \eqref{flow-g2}, the inverse of the metric evolves by
\begin{align}\label{flow-g^-1}
  \frac{\pt}{\pt t}g^{ij}=&-2h^{ij}=~2g^{ik}g^{jl}(R_{kl}+\frac 13|T|^2g_{kl}+2T_k^{\,\,m}T_{ml}).
\end{align}

The next lemma describes the evolution equations of the torsion tensor $T$, $\nabla\varphi$ and the curvature tensor $Rm$ along the Laplacian flow.  Here, and for the rest of the article, if $A,B$ are tensors and $k\in\mathbb{N}$, then $A*B$ denotes a contraction of tensors $A,B$ using only the metric $g$ (which is covariant constant) and we write a tensor $S\lessapprox kA*B$\footnote{Note that the inequality $``\lessapprox"$ can be differentiated, i.e., $\nabla S\lessapprox k\nabla A*B+kA*\nabla B$, unlike the usual inequality $``\leq"$ case.} if $S$ is equal to the sum of at most $k$ terms of the form $A*B$.
\begin{lem}
Suppose that $\varphi(t), t \in [0,T_0]$ is a solution to the Laplacian flow \eqref{Lap-flow-def} on a compact  manifold $\M$. The evolution equations of the torsion tensor $T$, $\nabla\varphi$ and the curvature tensor $Rm$ satisfy the following estimates:
\begin{align}
  \left(\frac{\pt}{\pt t}-\Delta\right)T\lessapprox &~8Rm*T+Rm*\nabla\varphi 
   +11\nabla T* T*\varphi+4 T*T*T;\label{evl-tors-0}\\
  \left(\frac{\pt}{\pt t}-\Delta\right)\nabla\varphi\lessapprox & ~62\nabla T*T*\varphi+6\nabla T*\nabla\varphi*\varphi+29 Rm*\nabla\varphi+Rm*T\nonumber\\
  &+Rm*\varphi*(T*\varphi+\nabla\varphi*\varphi)+24 T*T*\nabla\varphi\label{evl-nab-var-0}
\intertext{and}
\label{evl-Rm-0}
  \left(\frac{\pt}{\pt t}-\Delta\right)Rm\lessapprox & ~ 33 Rm*Rm+4Rm*T*T +35 \nabla\left(\nabla T*T\right).
\end{align}
\end{lem}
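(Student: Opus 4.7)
The strategy is to compute $\partial_t$ and $\Delta$ of each of the three tensors directly, then subtract and count terms. All the ingredients are assembled in the preceding sections: the flow equation \eqref{flow-g2} for the metric, Proposition~\ref{prop-hdl-var} giving $\partial_t \varphi = i_\varphi(h)$, the identity \eqref{nabla-var} relating $\nabla\varphi$ to $T$, the contraction identities \eqref{contr-iden-1}--\eqref{contr-iden-4}, and Proposition~\ref{prop-nabla-T} expressing $\nabla T$ through $Rm$, $T$ and $\varphi$.

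I would treat $Rm$ first. Under any flow $\partial_t g = 2h$, the curvature evolves by a standard formula of the shape
\[
\partial_t R_{ijkl} = \tfrac{1}{2}\bigl(\nabla_i\nabla_k h_{jl} - \nabla_i\nabla_l h_{jk} - \nabla_j\nabla_k h_{il} + \nabla_j\nabla_l h_{ik}\bigr) + h*Rm,
\]
obtained by differentiating the Riemann tensor via the evolution of the Christoffel symbols. Substituting \eqref{h-tensor-1} produces $\nabla^2 Rc$, $\nabla^2(T*T)$ and $h*Rm$ contributions. The contracted second Bianchi identity rewrites the $\nabla^2 Rc$ combination as $\Delta Rm + Rm*Rm$, which cancels the $-\Delta Rm$ on the left of \eqref{evl-Rm-0}; what remains are only terms of the shapes $Rm*Rm$, $Rm*T*T$ and $\nabla(\nabla T*T)$. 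Counting the number of occurrences of each pattern yields the coefficients $33$, $4$ and $35$.

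Next I would handle $\nabla\varphi$, which is more involved. Differentiating \eqref{nabla-var} in $t$, using $\partial_t \varphi = i_\varphi(h)$ from Proposition~\ref{prop-hdl-var} together with the evolution $\partial_t \Gamma \sim \nabla h$, expresses $\partial_t \nabla\varphi$ as a sum of $\nabla h*\varphi$ and $h*\nabla\varphi$ terms. The Laplacian $\Delta \nabla\varphi$ is computed by commuting one covariant derivative past $\Delta$, producing $\nabla\Delta\varphi$ plus $Rm*\nabla\varphi$ and $\nabla Rm*\varphi$ terms; Proposition~\ref{prop-nabla-T} is then used to replace the appearing $\nabla^2 T$ by $\nabla Rm*\varphi + \nabla(T*T)*\varphi$. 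The leading pieces of $\partial_t \nabla\varphi$ and $\Delta\nabla\varphi$ cancel, and what survives has exactly the structure of the right-hand side of \eqref{evl-nab-var-0}. Finally, \eqref{evl-tors-0} for $T$ follows from the estimate for $\nabla\varphi$ by contracting with $\psi$ via \eqref{nabla-var} and invoking \eqref{nabla-psi} for $\partial_t \psi$; the extra factors of $\psi$ are absorbed using the contraction identities \eqref{contr-iden-1}--\eqref{contr-iden-4}.

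The main obstacle is not conceptual but combinatorial: the constants $8$, $11$, $4$, $62$, $6$, $29$, $24$, $33$, $4$, $35$ must be tracked precisely, as they feed into the size of the $C_k$ in the refined Shi-type estimate \eqref{main-deri-1} that powers the real-analyticity proof and therefore cannot be absorbed into a generic universal constant. The bookkeeping is made delicate by the facts that Proposition~\ref{prop-nabla-T} produces six terms per application, that the commutator $[\nabla_i,\nabla_j]$ on a tensor of rank $r$ generates $r$ curvature terms, and that each use of \eqref{nabla-var} multiplies the count by the number of contracted $\psi$'s. The notation ``$\lessapprox$'' together with its differentiation rule (the footnote in the statement) is introduced precisely so that these explicit counts propagate cleanly when one differentiates the evolution equations to higher order in the sequel.
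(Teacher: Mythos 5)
Your overall plan (compute $\frac{\pt}{\pt t}$ and $\Delta$ of each tensor separately and count the resulting $*$-patterns) is the right one for \eqref{evl-Rm-0}, and indeed the paper simply imports the evolution equations of $Rm$ and $T$ from \cite[\S 3]{Lotay-Wei-1}, where they are derived essentially as you describe (variation of $Rm$ under $\pt_t g=2h$ with $h$ as in \eqref{h-tensor-1}, second Bianchi identity to produce $\Delta Rm$, and $\nabla^2(T*T)\lessapprox 2\nabla(\nabla T*T)$). For \eqref{evl-nab-var-0} and \eqref{evl-tors-0}, however, your route runs in the opposite direction to the paper's and hides the main difficulty. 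The paper \emph{starts} from the already-established equation $\pt_t T=\Delta T+(\text{lower order})$ of \cite[\S 3.2]{Lotay-Wei-1}, writes $\nabla\varphi=T*g^{-1}*\psi$ via \eqref{nabla-var}, and differentiates that product in $t$ and in space; the two rough Laplacians then cancel identically and no $\nabla Rm$ term ever enters. You instead propose to derive $(\pt_t-\Delta)\nabla\varphi$ directly from $\pt_t\varphi=i_\varphi(h)$ and $\pt_t\Gamma\sim\nabla h$, and then to obtain \eqref{evl-tors-0} by contracting with $\psi$. On that route $\pt_t\nabla\varphi$ contains $\nabla Rc*\varphi$ (from $\nabla h$ with $h=-Rc+\ldots$), and $\Delta\nabla\varphi=\nabla\Delta\varphi+[\Delta,\nabla]\varphi$ contains $\operatorname{div}Rm*\varphi=\nabla Rc*\varphi$ from the commutator, while the target \eqref{evl-nab-var-0} contains no $\nabla Rm$ term at all. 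The exact cancellation of these third-order (in $g$) contributions is precisely the non-standard weak parabolicity the introduction warns about; it is the substance of the computation in \cite{Lotay-Wei-1}, and your proposal asserts it ("the leading pieces cancel") without verifying the index structures actually match. As written, this is a genuine gap, not a routine step.

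A second, related gap: the entire content of this lemma is the explicit, $k$-independent constants $8,11,4,62,6,29,24,33,4,35$, which feed into the later combinatorics, and your proposal computes none of them. Moreover, because your decomposition of $(\pt_t-\Delta)\nabla\varphi$ differs from the paper's (and your derivation of \eqref{evl-tors-0} from \eqref{evl-nab-var-0} picks up extra cross terms $-2\nabla^2\varphi*\nabla\psi$ and contributions from $(\pt_t-\Delta)\psi$ and $(\pt_t-\Delta)g^{-1}$ that you do not mention), there is no reason the counts you would obtain agree with the stated ones. A different set of explicit constants would in fact suffice for the rest of the paper, but then you are proving a variant of the lemma, not the lemma as stated; either way the bookkeeping has to be done, since "counting yields the coefficients" is the claim to be proved, not an argument for it.
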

\proof
The estimates \eqref{evl-tors-0} and \eqref{evl-Rm-0} follow directly from the evolution equations of $T$ and $Rm$ along the Laplacian flow, which have been derived in \cite[\S 3]{Lotay-Wei-1}. 
To show \eqref{evl-nab-var-0}, recall that $\nabla\varphi$ and $T$ are related by
\begin{equation*}
  \nabla_i\varphi_{jkl}=T_{im}g^{mn}\psi_{njkl}.
\end{equation*}
Then we have
\begin{align}\label{est-I}
   \frac{\pt}{\pt t} \nabla_i\varphi_{jkl}=&  \left(\frac{\pt}{\pt t}T_{im}\right)g^{mn}\psi_{njkl}+T_{im}\left(\frac{\pt}{\pt t}g^{mn}\right)\psi_{njkl}+ T_{im}g^{mn} \left(\frac{\pt}{\pt t} \psi_{njkl}\right)\nonumber\\
   =&~I+II+III.
\end{align}
For the first term $I$, recall that from \cite[\S 3.2]{Lotay-Wei-1}, we have
\begin{align*}
 \frac{\pt}{\pt t}T_{ij} =&\Delta T_{ij}+R_i^{\,\,k}T_{kj}+\frac 12R_{ijmk}T^{mk}+\frac 12R_{mpi}^{\quad\,\,\, k}\nabla_k\varphi_j^{\,\,\,pm}\nonumber\\
  &+\nabla_pT_{qi}\nabla_m\varphi_n^{\,\,\,pq}\varphi_j^{\,\,mn}+\frac 13\nabla_m|T|^2\varphi_{ij}^{\,\,\,\,\, m}\nonumber\\
  &+\nabla_m(T_i^{\,\,k}T_{kn})\varphi_j^{\,\,mn}-T_i^{\,\,k}\nabla_pT_{qk}\varphi_j^{\,\,pq}-\frac 13|T|^2T_{ij}-T_i^{\,\,k}T_k^{\,\,m}T_{mj}.
 \end{align*}
Then
\begin{align}\label{est-I1}
  I=~ & \left(\frac{\pt}{\pt t}T_{im}\right)g^{mn}\psi_{njkl} \nonumber\displaybreak[0]\\
   =~&\Delta T_{im}g^{mn}\psi_{njkl}+R_i^{\,\,p}T_{pm}g^{mn}\psi_{njkl}+\frac 12R_{impq}T^{pq}g^{mn}\psi_{njkl}\nonumber\displaybreak[0]\\
   &+\frac 12R_{pqi}^{\quad\,\,s}\nabla_s\varphi_m^{\,\,\,\, qp}g^{mn}\psi_{njkl}+\nabla_pT_{qi}\nabla_s\varphi_t^{\,\,\,\,pq}\varphi_m^{\,\,st}g^{mn}\psi_{njkl}\nonumber\\
  &+\frac 13\nabla_p|T|^2\varphi_{im}^{\,\,\,\,\, \, p}g^{mn}\psi_{njkl}+\nabla_p(T_i^{\,\,s}T_{sq})\varphi_m^{\,\,\,pq}g^{mn}\psi_{njkl}\nonumber\\
  &-T_i^{\,\,s}\nabla_pT_{qs}\varphi_m^{\,\,\,pq}g^{mn}\psi_{njkl}-\frac 13|T|^2T_{im}g^{mn}\psi_{njkl}\nonumber\\
  &-T_i^{\,\,p}T_p^{\,\,q}T_{qm}g^{mn}\psi_{njkl}.
 \end{align}
Using \eqref{nabla-psi},
\begin{align*}
  \Delta  &\nabla_i\varphi_{jkl}=  \Delta\left(T_{im}g^{mn}\psi_{njkl}\right) \nonumber\\
  = &\left(\Delta T_{im}\right)g^{mn}\psi_{njkl}+T_{im}g^{mn}\Delta\psi_{njkl}+2\left(\nabla_pT_{im}\right)g^{mn}g^{pq}\nabla_q\psi_{njkl} \nonumber\displaybreak[0]\\
  =&\left(\Delta T_{im}\right)g^{mn}\psi_{njkl}-T_{i}^{\,\,\,n}\nabla^p\left(T_{pn}\varphi_{jkl}+T_{pj}\varphi_{nkl}+T_{pk}\varphi_{jnl}+T_{pl}\varphi_{jkn}\right)\nonumber\\
  &-2\left(\nabla_pT_{im}\right)g^{mn}g^{pq}\left(T_{qn}\varphi_{jkl}+T_{qj}\varphi_{nkl}+T_{qk}\varphi_{jnl}+T_{ql}\varphi_{jkn}\right)\nonumber\\
  =&\left(\Delta T_{im}\right)g^{mn}\psi_{njkl}-T_{i}^{\,\,\,n}(T_{pn}\nabla^p\varphi_{jkl}+T_{pj}\nabla^p\varphi_{nkl}+T_{pk}\nabla^p\varphi_{jnl}\nonumber\\
  &+T_{pl}\nabla^p\varphi_{jkn})-2\nabla_pT_{i}^{\,\,\,n}g^{pq}\left(T_{qn}\varphi_{jkl}+T_{qj}\varphi_{nkl}+T_{qk}\varphi_{jnl}+T_{ql}\varphi_{jkn}\right),
\end{align*}
where in the last equality we used $\nabla^pT_{pk}=0$. Using \eqref{nabla-var}, the second term of \eqref{est-I1} is equal to $R_i^{\,\,p}\nabla_p\varphi_{jkl}$ and the last two terms of \eqref{est-I1} can be rewritten as
\begin{align*}
  -\frac 13|T|^2\nabla_i\varphi_{jkl}-T_i^{\,\,p}T_p^{\,\,q}\nabla_q\varphi_{jkl}.
\end{align*}
The third and fourth terms of \eqref{est-I1} can be expressed using the contraction identity \eqref{contr-iden-3} as follows:
\begin{align*}
 & \frac 12R_{impq}T^{pq}g^{mn}\psi_{njkl}+\frac 12R_{pqi}^{\quad\,\,s}\nabla_s\varphi_m^{\,\,\,\, qp}g^{mn}\psi_{njkl} \\
  = & \frac 12R_{impq}T^{pq}g^{mn}\left(\varphi_{snj}\varphi_{tkl}g^{st}-g_{nk}g_{jl}+g_{nl}g_{jk}\right)\\
  &+\frac 12R_{pqi}^{\quad\,\,s}\nabla_s\varphi_m^{\,\,\,\, qp}g^{mn}\left(\varphi_{snj}\varphi_{tkl}g^{st}-g_{nk}g_{jl}+g_{nl}g_{jk}\right)\\
  =&\frac 12R_{impq}T^{pq}g^{mn}\varphi_{snj}\varphi_{tkl}g^{st}+\frac 12R_{pqi}^{\quad\,\,s}\nabla_s\varphi_m^{\,\,\,\, qp}g^{mn}\varphi_{snj}\varphi_{tkl}g^{st}\\
  &-\frac 12R_{ikpq}T^{pq}g_{jl}+\frac 12R_{ilpq}T^{pq}g_{jk}+\frac 12R_{pqi}^{\quad\,\,s}\nabla_s\varphi_k^{\,\,\,\, qp}g_{jl}+\frac 12R_{pqi}^{\quad\,\,s}\nabla_s\varphi_l^{\,\,\,\, qp}g_{jk}.
\end{align*}
Thus, we obtain our expression for the first term $I$ in \eqref{est-I}:
 \begin{align}\label{est-I4}
  I=&\Delta  \nabla_i\varphi_{jkl}+2\nabla_mT_i^{\,\,\,n}\left(T_{mn}\varphi_{jkl}+T_{mj}\varphi_{nkl}+T_{mk}\varphi_{jnl}+T_{ml}\varphi_{jkn}\right)\nonumber\displaybreak[0]\\
  &+T_{i}^{\,\,\,n}\left(T_{pn}\nabla^p\varphi_{jkl}+T_{pj}\nabla^p\varphi_{nkl}+T_{pk}\nabla^p\varphi_{jnl}+T_{pl}\nabla^p\varphi_{jkn}\right)
  +R_i^{\,\,p}\nabla_p\varphi_{jkl}\nonumber\\&+\frac 12R_{impq}T^{pq}g^{mn}\varphi_{snj}\varphi_{tkl}g^{st}+\frac 12R_{pqi}^{\quad\,\,s}\nabla_s\varphi_m^{\,\,\,\, qp}g^{mn}\varphi_{snj}\varphi_{tkl}g^{st}\nonumber\displaybreak[0]\\
   &-\frac 12R_{ikpq}T^{pq}g_{jl}+\frac 12R_{ilpq}T^{pq}g_{jk}+\frac 12R_{pqi}^{\quad\,\,s}\nabla_s\varphi_k^{\,\,\,\, qp}g_{jl}\nonumber\displaybreak[0]\\
   &+\frac 12R_{pqi}^{\quad\,\,s}\nabla_s\varphi_l^{\,\,\,\, qp}g_{jk}+\nabla_pT_{qi}\nabla_s\varphi_t^{\,\,\,\,pq}\varphi_m^{\,\,st}g^{mn}\psi_{njkl}\nonumber\displaybreak[0]\\
  &+\frac 13\nabla_p|T|^2\varphi_{im}^{\,\,\,\,\, \, p}g^{mn}\psi_{njkl}+\nabla_p(T_i^{\,\,s}T_{sq})\varphi_m^{\,\,\,\,pq}g^{mn}\psi_{njkl}\nonumber\\
  &-T_i^{\,\,s}\nabla_pT_{qs}\varphi_m^{\,\,\,pq}g^{mn}\psi_{njkl}-\frac 13|T|^2\nabla_i\varphi_{jkl}-T_i^{\,\,p}T_p^{\,\,q}\nabla_q\varphi_{jkl}.
\end{align}
Here we leave the terms involving $\varphi_{im}^{\,\,\,\,\, \, p}g^{mn}\psi_{njkl}$ and related expressions unchanged in \eqref{est-I4}, but observe that they can be expressed in terms of $\varphi$ using the contraction identity \eqref{contr-iden-4}.

The second term $II$ in \eqref{est-I} can be estimated using \eqref{flow-g^-1}. For the third term $III$ in \eqref{est-I}, recall from the contraction identity \eqref{contr-iden-3}, we have
\begin{equation*}
  \psi_{ijkl}=\varphi_{mij}\varphi_{nkl}g^{mn}-g_{ik}g_{jl}+g_{il}g_{jk}.
\end{equation*}
By \eqref{flow-2}, \eqref{flow-g2} and \eqref{flow-g^-1}, we can then derive that (see e.g.~\cite{Kar})
\begin{equation}\label{evl-psi}
  \frac{\pt}{\pt t}\psi_{ijkl}=h_i^m\psi_{mjkl}+h_j^m\psi_{imkl}+h_k^m\psi_{ijml}+h_l^m\psi_{ijkm},
\end{equation}
where $h$ is given in \eqref{h-tensor-0} (and equivalently in \eqref{h-tensor-1}). Then using \eqref{flow-g^-1} and \eqref{evl-psi}, we have that
$II+III$ is equal to
\begin{align}\label{est-II1}
  -2T_{im}h^{mn}&\psi_{njkl}+ T_{im}g^{mn} \big(h_n^s\psi_{sjkl}+h_j^s\psi_{nskl}   +h_k^s\psi_{njsl}+h_l^s\psi_{njks}\big)\nonumber\\
   =&~T_{im}g^{mn} \big(-h_n^s\psi_{sjkl}+h_j^s\psi_{nskl}+h_k^s\psi_{njsl}+h_l^s\psi_{njks}\big).
\end{align}
By \eqref{contr-iden-4} and \eqref{h-tensor-0}, the first term on the right-hand side of \eqref{est-II1} is
\begin{align*}
  -T_{im}g^{mn}h_n^s\psi_{sjkl} &=T_{im}g^{mn}\left(\nabla_pT_{qn}\varphi^{spq}+\frac 13|T|^2\delta_{ns}+T_n^{\,\,\,p}T_p^{\,\,\,s}\right)\psi_{sjkl} \nonumber\\
   &=T_i^{\,\,\,n}\nabla_jT_{qn}\varphi_{kl}^{\,\,\,\,\,q}-T_i^{\,\,\,n}\nabla_pT_{jn}\varphi_{kl}^{\,\,\,\,\,p}-T_i^{\,\,\,n}\nabla_kT_{qn}\varphi_{jl}^{\,\,\,\,\,q}\nonumber\\
   &+T_i^{\,\,\,n}\nabla_pT_{kn}\varphi_{jl}^{\,\,\,\,\,q}+T_i^{\,\,\,n}\nabla_lT_{qn}\varphi_{jk}^{\,\,\,\,\,q}-T_i^{\,\,\,n}\nabla_pT_{ln}\varphi_{jk}^{\,\,\,\,\,p}\nonumber\\
   &+\frac 13|T|^2\nabla_i\varphi_{jkl}+T_i^{\,\,\,n}T_n^{\,\,\,p}\nabla_p\varphi_{jkl}.
\end{align*}
By \eqref{nabla-var} and \eqref{h-tensor-1}, the remaining three terms of \eqref{est-II1} are equal to
\begin{align*}
   T_{im}g^{mn} \big(h_j^s\psi_{nskl}&+h_k^s\psi_{njsl}+h_l^s\psi_{njks}\big)
   =h_j^s\nabla_i\varphi_{skl}+h_k^s\nabla_i\varphi_{jsl}+h_l^s\nabla_i\varphi_{jks}\nonumber\\
   =&-\left(R_{jp}+\frac 13|T|^2g_{jp}+2T_j^{\,\,\,s}T_{kp}\right)g^{pq}\nabla_i\varphi_{qkl}\nonumber\\
    &-\left(R_{kp}+\frac 13|T|^2g_{kp}+2T_k^{\,\,\,s}T_{kp}\right)g^{pq}\nabla_i\varphi_{jql}\nonumber\\
    &-\left(R_{lp}+\frac 13|T|^2g_{lp}+2T_l^{\,\,\,s}T_{kp}\right)g^{pq}\nabla_i\varphi_{jkq}.
\end{align*}
Therefore,
\begin{align}\label{est-II3}
 II+III= &\, T_i^{\,\,\,n}\nabla_jT_{qn}\varphi_{kl}^{\,\,\,\,\,q}-T_i^{\,\,\,n}\nabla_pT_{jn}\varphi_{kl}^{\,\,\,\,\,p}-T_i^{\,\,\,n}\nabla_kT_{qn}\varphi_{jl}^{\,\,\,\,\,q}\nonumber\\
   &+T_i^{\,\,\,n}\nabla_pT_{kn}\varphi_{jl}^{\,\,\,\,\,q}+T_i^{\,\,\,n}\nabla_lT_{qn}\varphi_{jk}^{\,\,\,\,\,q}-T_i^{\,\,\,n}\nabla_pT_{ln}\varphi_{jk}^{\,\,\,\,\,p}\nonumber\\
   &+\frac 13|T|^2\nabla_i\varphi_{jkl}+T_i^{\,\,\,n}T_n^{\,\,\,p}\nabla_p\varphi_{jkl} \nonumber\displaybreak[0]\\
  & -\left(R_{jp}+\frac 13|T|^2g_{jp}+2T_j^{\,\,\,s}T_{kp}\right)g^{pq}\nabla_i\varphi_{qkl}\nonumber\\
    &-\left(R_{kp}+\frac 13|T|^2g_{kp}+2T_k^{\,\,\,s}T_{kp}\right)g^{pq}\nabla_i\varphi_{jql}\nonumber\\
    &-\left(R_{lp}+\frac 13|T|^2g_{lp}+2T_l^{\,\,\,s}T_{kp}\right)g^{pq}\nabla_i\varphi_{jkq}.
\end{align}
The estimate \eqref{evl-nab-var-0} then follows from \eqref{est-I}, \eqref{est-I4} and \eqref{est-II3}.
\endproof

\begin{rem}
Although $\nabla\varphi$ can be expressed using $T$  via \eqref{nabla-var}, it is not straightforward to write $\nabla^k\varphi$ in terms of $\nabla^jT, j=0,1,\cdots,k-1$. The evolution equation \eqref{evl-nab-var-0} for $\nabla\varphi$  is thus useful in $\S$\ref{sec:global} to estimate $\nabla^k\varphi$.
\end{rem}

\section{Global real analyticity}\label{sec:global}

In this section, we first prove the key derivative estimates for $Rm,T$ and $\varphi$, and then deduce Theorem \ref{mainthm-local} in the special case when $U=M$ is compact.

\subsection{Commutator formula}
First, we have the following commutator formula for $\nabla^k$ and $\Delta$, which can be proved using the Ricci identity for commuting the covariant derivatives of tensor, i.e., for a $k$-tensor $A$ on $\M$:
\begin{equation*}
  (\nabla_i\nabla_j-\nabla_j\nabla_i)A_{i_1i_2\cdots i_k}=\sum_{l=1}^kR_{iji_l}^{\quad m}A_{i_1\cdots i_{l-1}mi_{l+1}\cdots i_k}.
\end{equation*}

\begin{lem}[{$\!\!$\cite{Bando}; \cite[Lemma 13.24]{Chow-etal-2008}}]\label{lem-comu-spa}
For any $p$-tensor $A$ with $p\geq 1$ and any integer $k\in \mathbb{N}$, we have
\begin{align*}
  \nabla^k\Delta A-\Delta\nabla^kA~\lessapprox & ~14(p+1)\sum_{i=0}^k\binom{k+2}{i+2}\nabla^iRm*\nabla^{k-i}A.
\end{align*}
\end{lem}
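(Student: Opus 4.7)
The plan is to prove this commutator formula by induction on $k$, with the Ricci identity as the main tool. The base case $k=0$ is trivial since the commutator $\nabla^0 \Delta - \Delta \nabla^0$ vanishes, and for $k=1$ one expands
\[
[\nabla_a, \Delta]A = g^{ij}\bigl[(\nabla_a\nabla_i - \nabla_i\nabla_a)\nabla_j A + \nabla_i(\nabla_a\nabla_j - \nabla_j\nabla_a)A\bigr],
\]
then applies the Ricci identity stated above the lemma to the $(p+1)$-tensor $\nabla A$ (contributing $p+1$ terms of the form $Rm*\nabla A$) and to $A$, followed by Leibniz differentiation of the $p$ resulting $Rm*A$ terms (giving $p$ more $Rm*\nabla A$ terms and $p$ terms of $\nabla Rm * A$). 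The resulting bound $(2p+1)\,Rm*\nabla A + p\,\nabla Rm * A$ sits comfortably inside $14(p+1)[3\,Rm*\nabla A + \nabla Rm * A]$.

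For the inductive step, I would use the decomposition
\[
\nabla^{k+1}\Delta A - \Delta \nabla^{k+1}A = \nabla\bigl(\nabla^k \Delta A - \Delta \nabla^k A\bigr) + [\nabla, \Delta]\nabla^k A.
\]
Apply the inductive hypothesis together with the differentiable property of $\lessapprox$ noted in the footnote to the first term, which produces $\nabla^{i+1}Rm * \nabla^{k-i}A$ and $\nabla^i Rm * \nabla^{k+1-i}A$ weighted by $14(p+1)\binom{k+2}{i+2}$. Collecting by the order $j$ of covariant derivatives on $Rm$ and invoking Pascal's identity $\binom{k+2}{j+1} + \binom{k+2}{j+2} = \binom{k+3}{j+2}$ produces precisely the desired binomial coefficients at intermediate levels $1 \leq j \leq k$. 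The remaining commutator $[\nabla, \Delta]\nabla^k A$ is handled by the Ricci identity applied to the $(p+k)$-tensor $\nabla^k A$, yielding a bound of the form $(2(p+k)+1)\,Rm * \nabla^{k+1}A + (p+k)\,\nabla Rm * \nabla^k A$, which fills in the endpoint deficits at $j=0$ and $j=1$ left by the Pascal cascade.

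The main obstacle will be the combinatorial bookkeeping of the constants: one must verify that the universal factor $14(p+1)$ (independent of $k$) provides enough slack to absorb the extra $2(p+k)+1$ and $p+k$ terms generated by each commutator. Since $\binom{k+3}{j+2}$ grows polynomially in $k$ while the commutator contributions are only linear in $p+k$, the slack is ample for large $k$, and the specific constant $14$ is calibrated to handle the tightest balance at small $k$ and at the endpoint indices $j=0,1$. As this is the standard commutator lemma of Bando, appearing as Lemma 13.24 of Chow et al., I would follow the detailed accounting there to check that the induction closes uniformly.
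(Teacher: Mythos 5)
The paper itself offers no proof of this lemma (it is quoted from Bando and from Chow et al., Lemma 13.24), so your proposal is judged on its own merits. The overall strategy --- Ricci identity plus the decomposition $\nabla^{k+1}\Delta A-\Delta\nabla^{k+1}A=\nabla(\nabla^{k}\Delta A-\Delta\nabla^{k}A)+[\nabla,\Delta]\nabla^{k}A$ --- is the standard and correct one, and your count for the single commutator, namely $[\nabla,\Delta]B\lessapprox (2q+1)Rm*\nabla B+q\,\nabla Rm*B$ for a $q$-tensor $B$, is right. However, the way you propose to close the induction contains a concrete error. Collecting the differentiated inductive hypothesis by the order $j$ of derivatives on $Rm$, the coefficient of $\nabla^{j}Rm*\nabla^{k+1-j}A$ is $14(p+1)\bigl[\binom{k+2}{j+1}+\binom{k+2}{j+2}\bigr]=14(p+1)\binom{k+3}{j+2}$ for every $1\leq j\leq k$; in particular at $j=1$ Pascal's identity \emph{exactly saturates} the target coefficient, leaving no deficit there. (The only genuine deficit is at $j=0$, of size $14(p+1)(k+2)$, which does absorb the $2(p+k)+1$ terms of $Rm*\nabla^{k+1}A$.) The remaining $(p+k)$ terms of $\nabla Rm*\nabla^{k}A$ produced by $[\nabla,\Delta]\nabla^{k}A$ therefore have nowhere to go: the inductive step would require $14(p+1)\binom{k+3}{3}+(p+k)\leq 14(p+1)\binom{k+3}{3}$, which is false. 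No choice of the $k$-independent prefactor can repair this, since the same constant appears on both sides of the saturated Pascal identity; your heuristic that ``$\binom{k+3}{j+2}$ grows polynomially while the commutator contributions are only linear'' compares the wrong quantities.

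The statement is nevertheless true, and the fix is to abandon the naive induction on the final bound. Either strengthen the inductive hypothesis so that every interior coefficient carries explicit slack, or (cleaner) unwind the recursion into the telescoping sum $\nabla^{k}\Delta A-\Delta\nabla^{k}A=\sum_{j=0}^{k-1}\nabla^{k-1-j}[\nabla,\Delta]\nabla^{j}A$ and count terms directly. Expanding each summand by Leibniz and summing over $j$ with the hockey-stick identities $\sum_{l=i}^{k-1}\binom{l}{i}=\binom{k}{i+1}$ and $\sum_{l=i}^{k-1}(k-1-l)\binom{l}{i}=\binom{k}{i+2}$ (the latter is what tames the factor $2(p+j)+1$, which is linear in $j$), one finds that the coefficient of $\nabla^{i}Rm*\nabla^{k-i}A$ is bounded by roughly $(3p+4)\binom{k+2}{i+2}$, comfortably inside $14(p+1)\binom{k+2}{i+2}$. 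This is essentially the computation in the cited Lemma 13.24 of Chow et al.
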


We also have the following commutator formula for $\nabla^k$ and $\frac{\pt}{\pt t}$ acting on a tensor along the Laplacian flow.
\begin{lem}\label{lem-comu-pt}
If $\varphi(t), t \in [0,T_0]$ is a solution to the Laplacian flow \eqref{Lap-flow-def} on a compact  manifold $\M$, then for any $p$-tensor $A$ with $p\geq 1$ and any integer $k\in \mathbb{N}$, we have
\begin{align}\label{lem-comu-pt-eqn}
  \nabla^k\frac{\pt}{\pt t} A-\frac{\pt}{\pt t}\nabla^kA~&\lessapprox ~ 21(p+1)\sum_{i=1}^k\binom{k+1}{i+1}   \nabla^iRm*\nabla^{k-i}A\nonumber\\
  & +13(p+1)\sum_{i=1}^k \binom{k+1}{i+1} 
  \nabla^{i}(T*T)*\nabla^{k-i}A.
\end{align}
\end{lem}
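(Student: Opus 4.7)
The plan is to prove Lemma \ref{lem-comu-pt} by induction on $k$. The key input is the standard variation formula for the Christoffel symbols under a metric flow: since $\partial_t g_{ij} = 2h_{ij}$ by \eqref{flow-g2}, the Levi-Civita connection evolves by
\begin{equation*}
\partial_t \Gamma^l_{ij} = g^{lm}\bigl(\nabla_i h_{jm} + \nabla_j h_{im} - \nabla_m h_{ij}\bigr).
\end{equation*}
Using \eqref{h-tensor-1} and the fact that $Rc$ is a metric contraction of $Rm$, this gives schematically $\partial_t \Gamma \lessapprox c_1 \nabla Rm + c_2 \nabla(T*T)$ with small explicit integer constants $c_1, c_2$, obtained by counting the three terms in the variation formula above and the three terms defining $h$ in \eqref{h-tensor-1}. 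Since $\lessapprox$ can be differentiated, applying $\nabla^{i-1}$ yields $\nabla^{i-1}(\partial_t \Gamma) \lessapprox c_1 \nabla^i Rm + c_2 \nabla^i(T*T)$.

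The base case $k=1$ is the standard identity
\begin{equation*}
[\partial_t, \nabla_i] A_{j_1 \ldots j_p} = -\sum_{l=1}^p (\partial_t \Gamma^m_{i j_l})\, A_{j_1 \ldots m \ldots j_p},
\end{equation*}
which gives at most $p$ schematic terms of the form $\partial_t \Gamma * A$, matching \eqref{lem-comu-pt-eqn} at $k=1$ (note $\binom{2}{2}=1$). For the inductive step, I would decompose
\begin{equation*}
\partial_t \nabla^k A - \nabla^k \partial_t A = \nabla\bigl(\partial_t \nabla^{k-1} A - \nabla^{k-1} \partial_t A\bigr) + [\partial_t, \nabla] \nabla^{k-1} A.
\end{equation*}
The first term, after differentiating the inductive hypothesis, produces terms of the form $\nabla^i(\partial_t \Gamma) * \nabla^{k-1-i} A$ and $\nabla^{i-1}(\partial_t \Gamma) * \nabla^{k-i} A$ with coefficients $\binom{k}{i+1}$ and $\binom{k}{i}$ respectively; the second term, applying the base case to the $(p+k-1)$-tensor $\nabla^{k-1} A$, contributes further terms of the same schematic shape. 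Combining these and using Pascal's identity $\binom{k}{i} + \binom{k}{i+1} = \binom{k+1}{i+1}$ yields exactly the coefficient $\binom{k+1}{i+1}$ in \eqref{lem-comu-pt-eqn}. Substituting $\nabla^{i-1}(\partial_t \Gamma) \lessapprox c_1 \nabla^i Rm + c_2 \nabla^i(T*T)$ and reindexing then produces the desired form.

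The main obstacle is the careful bookkeeping needed to produce the precise combinatorial coefficient $\binom{k+1}{i+1}$ and the explicit numerical factors $21$ and $13$. In particular, one must verify that the index-counting factor from applying the base case to a $(p+k-1)$-tensor $\nabla^{k-1} A$ at each step grows only in a way that can be absorbed into the binomial $\binom{k+1}{i+1}$, leaving a uniform prefactor $(p+1)$ rather than $(p+k)$; this mirrors the corresponding phenomenon in the spatial commutator formula of Lemma \ref{lem-comu-spa}, so the argument should parallel the one in \cite{Bando} and \cite[Lemma 13.24]{Chow-etal-2008}. The precise constants $21$ and $13$ then emerge from a direct count of how many distinct schematic terms $\nabla^i Rm$ and $\nabla^i(T*T)$ each stage of the induction produces when $h$ and its derivatives are expanded via \eqref{h-tensor-1}.
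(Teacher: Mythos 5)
Your proposal is correct and follows essentially the same route as the paper: the paper simply cites the general commutator formula $\nabla^k\partial_t A-\partial_t\nabla^kA\lessapprox 3(p+1)\sum_{i=1}^k\binom{k+1}{i+1}\nabla^ih*\nabla^{k-i}A$ for a flow $\partial_tg=2h$ from \cite[Lemma 13.26]{Chow-etal-2008} (whose proof is exactly your induction via the Christoffel variation and Pascal's identity) and then substitutes $\nabla^ih\lessapprox 7\nabla^iRm+\tfrac{13}{3}\nabla^i(T*T)$ coming from \eqref{h-tensor-1}. Your bookkeeping concern is resolved exactly as you suspect: the $(p+k-1)$ terms produced by applying the base case to $\nabla^{k-1}A$ are absorbed because $(p+1)\binom{k}{2}+(p+k-1)\leq(p+1)\binom{k+1}{2}$, so the prefactor stays $(p+1)$.
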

\proof
First, by a trivial adjustment to the proof of \cite[Lemma 13.26]{Chow-etal-2008}, for any smooth one-parameter family of metrics $g(t)$ on $\M$ evolving by \eqref{evl-g} for any smooth family of symmetric $2$-tensors $h(t)$,
we have
\begin{align}\label{lem-comu-pf0}
  \nabla^k\frac{\pt}{\pt t} A-\frac{\pt}{\pt t}\nabla^kA~\lessapprox & ~ 3(p+1)\sum_{i=1}^k   \binom{k+1}{i+1}\nabla^i h*\nabla^{k-i}A.
\end{align}
Under the Laplacian flow, $g(t)$ evolves by \eqref{flow-g2}, so
\begin{equation}\label{lem-comu-pf2}
  \nabla^i h(t)\lessapprox ~7\nabla^iRm+\frac {13}3\nabla^{i}(T*T).
\end{equation}
The commutator formula \eqref{lem-comu-pt-eqn} follows by substituting \eqref{lem-comu-pf2} into \eqref{lem-comu-pf0}.
\endproof

Combining Lemmas \ref{lem-comu-spa} and \ref{lem-comu-pt}, we have the following commutator formula of $\nabla^k$ and the heat operator $\frac{\pt}{\pt t}-\Delta$ acting on a tensor.
\begin{prop}
If $\varphi(t), t \in [0,T_0]$ is a solution to the Laplacian flow \eqref{Lap-flow-def} 
on a compact  manifold $\M$, then  for any $p$-tensor $A$ with $p\geq 1$ and any integer $k\in \mathbb{N}$, we have
\begin{align}\label{prop-commu-heat}
  \left(\frac{\pt}{\pt t}-\Delta\right)\nabla^kA &-\nabla^k\left(\frac{\pt}{\pt t}-\Delta\right) A\nonumber\\
  \lessapprox & ~ 21(p+1)\sum_{i=0}^k\frac{i+k+4}{i+2}\binom{k+1}{i+1}\nabla^iRm*\nabla^{k-i}A\nonumber\\
  & +13(p+1)\sum_{i=1}^k\binom{k+1}{i+1}\nabla^{i}(T*T)*\nabla^{k-i}A.
\end{align}
\end{prop}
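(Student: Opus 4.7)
The strategy is straightforward: decompose the heat-operator commutator into its spatial and temporal parts and apply the two lemmas already at hand. Specifically, I would write
\begin{equation*}
  \left(\tfrac{\pt}{\pt t}-\Delta\right)\nabla^k A - \nabla^k\left(\tfrac{\pt}{\pt t}-\Delta\right) A
  = \Bigl(\tfrac{\pt}{\pt t}\nabla^k A - \nabla^k\tfrac{\pt}{\pt t}A\Bigr) + \Bigl(\nabla^k\Delta A - \Delta\nabla^k A\Bigr),
\end{equation*}
and apply Lemma~\ref{lem-comu-pt} to the first bracket and Lemma~\ref{lem-comu-spa} to the second. Since the relation $\lessapprox$ only counts (signed) terms, there is no sign issue in swapping $\tfrac{\pt}{\pt t}\nabla^k A - \nabla^k\tfrac{\pt}{\pt t}A$ with its negative. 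The $13(p+1)\binom{k+1}{i+1}\nabla^i(T\!*\!T)*\nabla^{k-i}A$ sum then appears verbatim from Lemma~\ref{lem-comu-pt}, with no interaction from the spatial commutator.

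The only substantive bookkeeping is to consolidate the two $Rm$-containing sums into the single expression claimed, namely
\begin{equation*}
  21(p+1)\sum_{i=0}^k\frac{i+k+4}{i+2}\binom{k+1}{i+1}\nabla^i Rm*\nabla^{k-i}A.
\end{equation*}
Here I would use the elementary identity $\binom{k+2}{i+2}=\tfrac{k+2}{i+2}\binom{k+1}{i+1}$ to rewrite the spatial commutator bound as $14(p+1)\tfrac{k+2}{i+2}\binom{k+1}{i+1}$ in each term. Adding the temporal commutator's contribution of $21(p+1)\binom{k+1}{i+1}$ (present for $i\ge 1$, absent for $i=0$), the total coefficient at index $i$ is at most
\begin{equation*}
  \binom{k+1}{i+1}\cdot\frac{14(k+2)+21(i+2)}{i+2}(p+1)
  = \binom{k+1}{i+1}\cdot\frac{14k+21i+70}{i+2}(p+1)
\end{equation*}
for $i\ge 1$, and $14(k+2)(k+1)/2\cdot(p+1)$ for $i=0$. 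A quick check shows $14k+21i+70\le 21(i+k+4)$ (since $7k+14\ge 0$), and similarly $14(k+2)\le 21(k+4)$ for $k\ge 0$, so in every case the coefficient is dominated by $21(p+1)\tfrac{i+k+4}{i+2}\binom{k+1}{i+1}$, which is exactly the form in the statement.

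There is no real obstacle: the result is a bookkeeping identity combining Lemmas~\ref{lem-comu-spa} and \ref{lem-comu-pt}. The only mildly delicate point is ensuring the $i=0$ term, to which Lemma~\ref{lem-comu-pt} does not contribute (since $\tfrac{\pt}{\pt t}\Gamma$ is first order in $h$), is still dominated by the uniform formula on the right-hand side; this follows from the inequality $14(k+2)\le 21(k+4)$ noted above. With this verification the proof is complete.
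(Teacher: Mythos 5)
Your proposal is correct and is exactly the argument the paper intends: the proposition is stated immediately after the phrase ``Combining Lemmas \ref{lem-comu-spa} and \ref{lem-comu-pt}'' with no further proof, and your decomposition into the temporal and spatial commutators, the sign-irrelevance of $\lessapprox$, and the coefficient consolidation via $\binom{k+2}{i+2}=\frac{k+2}{i+2}\binom{k+1}{i+1}$ together with the checks $14k+21i+70\leq 21(i+k+4)$ and $14(k+2)\leq 21(k+4)$ supply precisely the omitted bookkeeping. No gaps.
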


\subsection{Main derivative estimate}
Our main estimate is the following, recalling the quantity $\Lambda(x,t)$ given
in \eqref{lambda-eq}.
\begin{thm}\label{thm-deri-est}
Suppose that $\varphi(t), t \in [0,T_0]$ is a solution to the Laplacian flow \eqref{Lap-flow-def} 
on a compact  manifold $\M$. There exists a universal positive constant $\alpha$ and a positive constant $C_*=C_*(T_0,K_0)$, where $K_0=\sup_\M|\Lambda(x,0)|$, such that
\begin{equation}\label{deri-est}
  \sum_{k=0}^N\frac {t^k}{(k+1)!^2}\left(|\nabla^kRm|^2(x,t)+|\nabla^{k+1}T|^2(x,t)+|\nabla^{k+2}\varphi|^2(x,t)\right)\leq C_*
\end{equation}
on $\M\times [0, \min\{T_0,\alpha/K_0\}]$ for all $N\in \mathbb{N}$.
\end{thm}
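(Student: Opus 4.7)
The plan is to prove \eqref{deri-est} by applying the parabolic maximum principle on the compact manifold $\M$ to a single carefully weighted sum, in the spirit of Bando's proof of real analyticity along the Ricci flow \cite{Bando}. With the shorthand
\begin{equation*}
G_k(x,t) = |\nabla^k Rm|^2 + |\nabla^{k+1}T|^2 + |\nabla^{k+2}\varphi|^2,
\end{equation*}
I would work with a weighted quantity $F_N(x,t) = \sum_{k=0}^N w_k\, t^k\, G_k(x,t)$, where the positive weights $w_k$ are comparable to $1/(k+1)!^2$ but may carry extra slack to be chosen at the end, and aim to show $\sup_{\M\times[0,\alpha/K_0]} F_N \leq C_\ast(T_0,K_0)$ uniformly in $N$. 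After a preliminary continuity argument based on the inequality $(\pt_t-\Delta)\Lambda^2 \lesssim \Lambda^3$ from \cite{Lotay-Wei-1}, which produces a universal $\alpha>0$ with $\Lambda \leq 2K_0$ on $[0,\alpha/K_0]$, the $k=0$ term is controlled since \eqref{nabla-var} and the contraction identities reduce $|\nabla^2\varphi|$ to expressions in $T$ and $\nabla T$.

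For the higher-order terms I would differentiate each of \eqref{evl-tors-0}, \eqref{evl-nab-var-0}, \eqref{evl-Rm-0} by $\nabla^k$, invoke the commutator formula \eqref{prop-commu-heat}, and apply the Bochner identity $(\pt_t-\Delta)|S|^2 = 2\langle S,(\pt_t-\Delta)S\rangle - 2|\nabla S|^2$ (together with the $Rm*S*S + T*T*S*S$ corrections produced by the evolving metric \eqref{flow-g2}) to obtain a pointwise bound of the form
\begin{equation*}
(\pt_t-\Delta)G_k \leq -2\bigl(|\nabla^{k+1}Rm|^2 + |\nabla^{k+2}T|^2 + |\nabla^{k+3}\varphi|^2\bigr) + R_k,
\end{equation*}
where $R_k$ is a finite sum of products $\nabla^a X * \nabla^b Y * \nabla^c Z$ with $X,Y,Z \in \{Rm,T,\nabla\varphi\}$ and $a+b+c \leq k+2$, each carrying a coefficient controlled by one of the binomials $\binom{k+2}{a+2}$ or $\binom{k+1}{a+1}$ appearing in Lemmas \ref{lem-comu-spa} and \ref{lem-comu-pt}.

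Forming $(\pt_t-\Delta)F_N$, I would then: (i) absorb the $\pt_t$ landing on $t^k$ (which drops the power by one) into $F_{N-1}$ via the inductive hypothesis; (ii) use the negative Bochner contributions $-\sum_k w_k t^k |\nabla^{k+1}\cdots|^2$ appearing in $F_N$ to swallow, via Cauchy--Schwarz after an index shift $k\mapsto k-1$, the cross terms of highest order in $R_{k-1}$; and (iii) estimate each remaining intermediate cross term by AM--GM, distributing the derivatives between adjacent summands of $F_N$ so that the factorial weights $w_a w_b w_c$ absorb the binomials $\binom{k+2}{a+2}$ in $R_k$. Once the $w_k$ are fixed with enough slack to also swallow the explicit numerical constants $33, 35, 62, \ldots$ visible in \eqref{evl-tors-0}--\eqref{prop-commu-heat}, this yields $(\pt_t - \Delta)F_N \leq C F_N + C$ on $\M\times[0,\alpha/K_0]$, and the parabolic maximum principle delivers the desired $N$-independent bound $F_N \leq C_\ast(T_0,K_0)$.

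The main obstacle I anticipate is exactly the combinatorial bookkeeping of step (iii): three coupled tensor fields ($Rm$, $T$, $\nabla\varphi$) of three different differential orders, together with the many explicit numerical constants in \eqref{evl-tors-0}--\eqref{prop-commu-heat}, force one to check the appropriate factorial--binomial inequality for every admissible triple $(a,b,c)$ with $a+b+c\leq k+2$, and to verify that the total contribution from $R_k$ can be split between $F_N$ and the good Bochner terms. This is a parabolic analogue of the combinatorial lemma in \cite{Bando} adapted to the $\GG_2$ setting; beyond this algebra, no analytic ingredient beyond the evolution equations and commutator formulas already assembled in \S\ref{sec:evlution} should be required.
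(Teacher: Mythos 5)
Your overall strategy coincides with the paper's: the same weighted quantity $\Phi_N=\sum_{k\le N} t^k G_k/(k+1)!^2$, the same ingredients (evolution equations \eqref{evl-tors-0}--\eqref{evl-Rm-0}, the commutator formula \eqref{prop-commu-heat}, Bochner, maximum principle), and the same mechanism of letting the negative gradient terms absorb the top-order cross terms. However, there is a genuine gap at the closing step. You assert that after the absorptions one arrives at a \emph{linear} inequality $(\pt_t-\Delta)F_N\le CF_N+C$. That is not achievable. The a priori bound $\Lambda\le 2K_0$ only linearizes the terms in which one factor is undifferentiated ($|Rm|$ or $|T|$); the genuinely trilinear convolution-type terms such as $\sum_{k}\sum_{0<i<k}\binom{k+2}{i+2}\nabla^iRm*\nabla^{k-i}Rm*\nabla^kRm$ (with all indices positive) sum, after Cauchy--Schwarz in $i$ and $k$, to something of the form $t\,\Phi_N^{1/2}\Psi_N$, where $\Psi_N$ is the dissipation $\sum \tilde a_k^2+\cdots$. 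No choice of weights $w_k$ removes the factor $\Phi_N^{1/2}$, because the intermediate derivatives $\nabla^iRm$, $1\le i\le k-1$, are exactly the quantities being estimated and are not controlled a priori. Such a term can only be absorbed into the good term $-\Psi_N$ under a \emph{smallness hypothesis} $t\Phi_N^{1/2}\le\varepsilon$, which must itself be propagated by a bootstrap (the paper's $\tau_N$), since $\Phi_N$ is the unknown.

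Moreover, even after that conditional absorption the surviving reaction term is $C\Phi_N^{3/2}$, not $C\Phi_N$, so the comparison ODE is $y'=Cy^{3/2}$, which blows up in finite time $\sim (C\,y(0)^{1/2})^{-1}\sim (CK_0)^{-1}$. This is precisely why the conclusion only holds on $[0,\alpha/K_0]$ for a \emph{universal} $\alpha$, and why $\alpha$ must be chosen compatibly with the bootstrap threshold so that the maximum-principle bound keeps $t\Phi_N^{1/2}$ below $\varepsilon$ up to time $\alpha/K_0$ uniformly in $N$. Your proposal never confronts this two-sided consistency requirement, and the phrase ``absorb the $\pt_t$ landing on $t^k$ into $F_{N-1}$ via the inductive hypothesis'' is also off: the term $k t^{k-1}w_kG_k$ carries a factor $t^{-1}$ relative to $w_kt^kG_k$ and is not bounded by $F_{N-1}$; it must instead be absorbed by the negative Bochner term coming from the $(k-1)$-st summand, using $k/(k+1)^2\le 1/4$. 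Without the bootstrap on $t\Phi_N^{1/2}$ and the $\Phi_N^{3/2}$ ODE comparison, the argument does not close and the existence of the universal constant $\alpha$ is not established.
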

For convenience, we define
\begin{align}
 a_k&=\frac{t^{\frac{k}{2}}|\nabla^kRm|}{(k+1)!},& b_k&=\frac{t^{\frac{k}{2}}|\nabla^{k+1}T|}{(k+1)!}, & c_k&=\frac{t^{\frac{k}{2}}|\nabla^{k+2}\varphi|}{(k+1)!},&\textrm{for }k\geq 0,\label{def-ak}\\
  \tilde{a}_k&=\frac{t^{\frac{k-1}2}|\nabla^kRm|}{k!}, & \tilde{b}_k&=\frac{t^{\frac{k-1}2}|\nabla^{k+1}T|}{k!},& \tilde{c}_k&=\frac{t^{\frac{k-1}2}|\nabla^{k+2}\varphi|}{k!}, &\textrm{for }k\geq 1.\label{def-ak-tilde}
\end{align}
By setting $k!=1$ for all $k\leq 0$, the above definition can cover
\begin{align*}
  \tilde{a}_0&=t^{-\frac 12}|Rm|, & \tilde{b}_0&=t^{-\frac 12}|\nabla T|,& \tilde{c}_0&=t^{-\frac 12}|\nabla^2\varphi|,\\
  b_{-1}&=t^{-\frac 12}|T|,& c_{-1}&=t^{-\frac 12}|\nabla\varphi|, & c_{-2}&=t^{-1}|\varphi|,\\
  \tilde{b}_{-1}&=t^{-1}|T|, & \tilde{c}_{-1}&=t^{-1}|\nabla\varphi|, & \tilde{c}_{-2}&=t^{-\frac 32}|\varphi|.
\end{align*}
Note that $|\varphi|^2=7$ and $|\nabla\varphi|^2=|T|^2\leq |Rm|=a_0$. Next, we define
\begin{align*}
  A_N&=\sum_{k=0}^Na_k^2, & B_N&=\sum_{k=0}^Nb_k^2, & C_N&=\sum_{k=0}^Nc_k^2,\\
  \tilde{A}_N&=\sum_{k=1}^N\tilde{a}_k^2,   & \tilde{B}_N&=\sum_{k=1}^N\tilde{b}_k^2, &  \tilde{C}_N&=\sum_{k=1}^N\tilde{c}_k^2,
\end{align*}
and
\begin{align*}
  \Phi_N= A_N+B_N+C_N,\quad \quad \Psi_N= \tilde{A}_N+\tilde{B}_N+\tilde{C}_N.
\end{align*}
Then \eqref{deri-est} is equivalent to showing that $\Phi_N\leq C_*$ for any  $N\in \mathbb{N}$.

The approach to prove \eqref{deri-est} is to establish an evolution inequality for $\Phi_N$ and then apply the maximum
 principle.  Although the method is clear, the derivation of the evolution inequality is somewhat computationally involved, so we break it up into a
  sequence of lemmas which deals with each of  the terms $A_N$, $B_N$ and $C_N$ in turn.  Throughout the proofs we will use the same symbol $C$ to denote a
   (finite) universal constant.

\begin{lem}\label{lem-4-5}
Suppose that $\varphi(t), t \in [0,T_0]$ is a solution to the Laplacian flow \eqref{Lap-flow-def} 
on a compact manifold $\M$. There exists a universal constant $C$  such that
\begin{align}\label{lem-4-5-est}
  \left(\frac{\pt}{\pt t}-\Delta\right)A_N \leq &-\frac 74\tilde{A}_{N+1}+\frac 14\Psi_{N+1}+C(t\Phi_N^{\frac 12}+t^2\Phi_N)\Psi_N\nonumber\\
  &+C\Phi_N^{\frac 32}(1+t\Phi_N^{\frac 12}).
\end{align}
\end{lem}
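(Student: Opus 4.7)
The plan is to differentiate $A_N$ in $t$, apply at each level $k$ the pointwise identity
\[
(\partial_t-\Delta)|\nabla^k Rm|^2 \;=\; 2\bigl\langle\nabla^k Rm,\,(\partial_t-\Delta)\nabla^k Rm\bigr\rangle \;-\; 2|\nabla^{k+1}Rm|^2 \;+\; (\partial_t g^{-1})\ast(\nabla^k Rm)^{\otimes 2},
\]
substitute the commutator formula \eqref{prop-commu-heat} together with the evolution equation \eqref{evl-Rm-0} for $(\partial_t-\Delta)Rm$, multiply by the weight $t^k/(k+1)!^2$, and sum over $0\leq k\leq N$. Summing $-2|\nabla^{k+1}Rm|^2\cdot t^k/(k+1)!^2$ collapses to exactly $-2\tilde A_{N+1}$; I retain $-\tfrac74\tilde A_{N+1}$ as the stated dissipation in \eqref{lem-4-5-est} and reserve the remaining $-\tfrac14\tilde A_{N+1}$ as a budget for Young absorption. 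The $t$-derivative of the weight contributes $\sum_k \frac{k}{(k+1)^2}\tilde a_k^2\leq \tilde A_N\leq \Psi_{N+1}$, absorbed inside the $\tfrac14\Psi_{N+1}$ allowance. Finally, using $\partial_t g^{-1}\sim Rm+T\ast T$ from \eqref{flow-g^-1}, the term $(\partial_t g^{-1})\ast(\nabla^k Rm)^{\otimes 2}$ rescales in the variables \eqref{def-ak}--\eqref{def-ak-tilde} to produce exactly the $t\Phi_N^{1/2}\Psi_N$ and $t^2\Phi_N\Psi_N$ pieces of \eqref{lem-4-5-est}.

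The central task is then to bound the inner-product sum
\[
\sum_{k=0}^N\frac{t^k}{(k+1)!^2}\bigl|\bigl\langle\nabla^k Rm,\,\nabla^k(\partial_t-\Delta) Rm + [\nabla^k,\partial_t-\Delta] Rm\bigr\rangle\bigr|
\]
by $C\Phi_N^{3/2}(1+t\Phi_N^{1/2})$ together with error terms within the reserved Young budget. After inserting \eqref{evl-Rm-0} and \eqref{prop-commu-heat}, the inner product splits schematically into three families of products: $\nabla^i Rm\ast\nabla^{k-i}Rm\ast\nabla^k Rm$ (from $Rm\ast Rm$ and the curvature piece of the commutator), $\nabla^i Rm\ast\nabla^jT\ast\nabla^mT\ast\nabla^k Rm$ (from $Rm\ast T\ast T$ in \eqref{evl-Rm-0} and from $\nabla^i(T\ast T)$ in the commutator), and $\nabla^{i+1}T\ast\nabla^{k+1-i}T\ast\nabla^k Rm$ (from $35\nabla(\nabla T\ast T)$). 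For each family, I rescale every derivative factor to the appropriate $a_\star,b_\star,c_\star$ or $\tilde a_\star,\tilde b_\star,\tilde c_\star$, choosing $\tilde{\,}$-variables for the top-order factors, and then apply Young's inequality $2xy\leq \tfrac14 x^2 + 4y^2$ with $x$ equal to a top-order $\tilde{\,}$-variable. This consumes the reserved quarter of dissipation and leaves a quadratic remainder that assembles into the polynomial expressions in $\Phi_N$ and $\Psi_N$ displayed in \eqref{lem-4-5-est}.

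The main obstacle is the combinatorial bookkeeping. One must verify that, after the rescaling, the universal convolutions arising from the coefficients $\binom{k+1}{i+1}$ in \eqref{prop-commu-heat} paired with $(k+1)!^2$ in the weight remain bounded by a constant independent of both $k$ and $N$. The pairing
\[
\binom{k+1}{i+1}\,\frac{1}{(k+1)!} \;=\; \frac{1}{(i+1)!\,(k-i)!},
\]
combined with the elementary bound $\sum_{i=0}^k 1/((i+1)!(k-i)!)\leq 2^{k+1}/(k+1)!$, ensures that the absolute constant $C$ in \eqref{lem-4-5-est} is truly universal. A second delicate point is the handling of low-order torsion: using $|T|^2=-R\leq C|Rm|\leq C a_0\leq C\Phi_N^{1/2}$ (from \eqref{Ricc-prop-closed}), and carefully tracking the $t^{1/2}$ factors that arise when $b_{-1}=t^{-1/2}|T|$ or $c_{-1}=t^{-1/2}|\nabla\varphi|$ must be converted into bounded quantities, produces precisely the $t^\ell$ prefactors in \eqref{lem-4-5-est} without generating any negative powers of $t$. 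Adding the three trilinear contributions to the $\partial_t g^{-1}$ and weight-derivative pieces then yields \eqref{lem-4-5-est}.
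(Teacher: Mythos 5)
Your proposal follows essentially the same route as the paper: the same pointwise identity for $(\frac{\pt}{\pt t}-\Delta)|\nabla^kRm|^2$, the commutator formula \eqref{prop-commu-heat} combined with \eqref{evl-Rm-0}, the weighted rescaling to $a_k,\tilde a_k$ with the top-order factors converted to tilde variables, and absorption of the resulting trilinear convolutions into the $-\frac14\tilde A_{N+1}$ and $\frac14\Psi_{N+1}$ budgets via Cauchy--Schwarz, exactly as in the paper's treatment of $I_1(k),I_2(k),I_3(k)$. One small remark: the operative convolution control after the factorial cancellation $\binom{k+1}{i+1}/(k+1)!=1/((i+1)!(k-i)!)$ is the leftover summable decay $\sum_i (i+2)^{-2}<1$ fed into Cauchy--Schwarz (your quoted bound $2^{k+1}/(k+1)!$ would not by itself give a $k$-independent constant), and the $t\Phi_N^{1/2}\Psi_N$, $t^2\Phi_N\Psi_N$ terms arise chiefly from these trilinear sums rather than from the $\pt_t g^{-1}$ piece alone; neither point affects the validity of the argument.
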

\proof

Applying \eqref{prop-commu-heat} to $A=Rm$ (where $p=4$), we have
\begin{align}\label{lem-4-5-pf0}
  \left(\frac{\pt}{\pt t}-\Delta\right)\nabla^kRm
  \lessapprox & ~\nabla^k \left(\frac{\pt}{\pt t}-\Delta\right)Rm\nonumber\\
  &~ +105\sum_{i=0}^k\frac{i+k+4}{i+2}\binom{k+1}{i+1}\nabla^iRm*\nabla^{k-i}Rm\nonumber\\
  &~ +65\sum_{i=1}^k\binom{k+1}{i+1}\nabla^{i}(T*T)*\nabla^{k-i}Rm.
\end{align}
Applying $\nabla^k$ to \eqref{evl-Rm-0} and substituting into \eqref{lem-4-5-pf0}, we obtain
 \begin{align*}
 \left(\frac{\pt}{\pt t}-\Delta\right)\nabla^kRm &\lessapprox ~   243
 \sum_{i=0}^k\binom{k+2}{i+2}\nabla^iRm*\nabla^{k-i}Rm\nonumber\\
   & ~ +35\nabla^{k+1}(\nabla T*T)+4T*T*\nabla^kRm\nonumber\\
   &~+69\sum_{i=1}^k\binom{k+1}{i+1}\nabla^{i}(T*T)*\nabla^{k-i}Rm.
\end{align*}
Since
\begin{equation*}
  |\nabla^kRm|^2=(g^{-1})^{*(k+4)}*\nabla^kRm*\nabla^kRm,
\end{equation*} 
we can use the evolution equation \eqref{flow-g^-1} of $g^{-1}$ to compute
\begin{align*}
  &\bigg(\frac{\pt}{\pt t}-\Delta\bigg)|\nabla^kRm|^2\\ &=2\bigg\langle\left(\frac{\pt}{\pt t}-\Delta\right)\nabla^kRm,\nabla^kRm\bigg\rangle-2|\nabla^{k+1}Rm|^2 \\
   &+ 2(k+4)\left(Rc+\frac 13|T|^2g+2T*T\right)*\nabla^kRm*\nabla^kRm\displaybreak[0]\\
   &\leq -2|\nabla^{k+1}Rm|^2+C\sum_{i=0}^k\binom{k+2}{i+2}|\nabla^iRm||\nabla^{k-i}Rm||\nabla^kRm|\nonumber\\
   &+C|\nabla^{k+1}(\nabla T*T)||\nabla^kRm|   +C\sum_{i=0}^k\binom{k+1}{i+1}|\nabla^{i}(T*T)||\nabla^{k-i}Rm||\nabla^kRm|.
\end{align*}

Then from the definition \eqref{def-ak} of $a_k$, we have
\begin{align}\label{lem-4-5-pf2}
  \left(\frac{\pt}{\pt t}-\Delta\right)a_k^2\leq & -2\tilde{a}_{k+1}^2+\frac {k\tilde{a}_k^2}{(k+1)^2}+I_{1}(k)+I_2(k)+I_3(k),
\end{align}
where
\begin{align*}
  I_1(k)=&~\frac {Ct^{ k}}{(k+1)!^2}\sum_{i=0}^k \binom{k+2}{i+2}|\nabla^iRm||\nabla^{k-i}Rm||\nabla^kRm|\\
   I_2(k)=&~\frac {Ct^{ k}}{(k+1)!^2}|\nabla^{k+1}(\nabla T*T)||\nabla^kRm|\\
   I_3(k)=&~\frac {Ct^{ k}}{(k+1)!^2}\sum_{i=0}^k\binom{k+1}{i+1}|\nabla^{i}(T*T)||\nabla^{k-i}Rm||\nabla^kRm|.
 \end{align*}
To obtain \eqref{lem-4-5-est} we sum \eqref{lem-4-5-pf2} from $k=0$ to $N$.  First, for $k=0$, we have
\begin{align}\label{lem-4-5-pf3}
   \left(\frac{\pt}{\pt t}-\Delta\right)a_0^2\leq  & -2\tilde{a}_1^2+C\left(a_0^3+b_0^2a_0+\tilde{b}_1a_0^{\frac 32}\right)\nonumber\\
   \leq &-2\tilde{a}_1^2+C\Phi_N^{\frac 32}+C_n\tilde{B}_N^{\frac 12}\Phi_N^{\frac 34}.
\end{align}
For $k=1$ to $N$, we estimate the sum over $k$ of the three terms $I_{1}(k)$, $I_2(k)$, $I_3(k)$ separately.  For $I_1(k)$ we have
\begin{align}\label{lem-4-5I1}
  \sum_{k=1}^NI_1(k) &=  \sum_{k=1}^N\left(Ca_0a_k^2 +Ct\sum_{i=0}^{k-1}\frac {(k+2)a_i\tilde{a}_{k-i}\tilde{a}_k}{(k+1)(i+2)}\right)\nonumber\\
 & \leq  CA_N^{\frac 32}+Ct\sum_{k=1}^N\left(\sum_{i=0}^{k-1}\frac 1{(i+2)^2}\sum_{i=0}^{k-1}a_i^2\tilde{a}_{k-i}^2\right)^{\frac 12}\tilde{a}_k\nonumber\displaybreak[0]\\
  &\leq CA_N^{\frac 32}+Ct\left(\sum_{k=1}^N\sum_{i=0}^{k-1}a_i^2\tilde{a}_{k-i}^2\right)^{\frac 12}\left(\sum_{k=1}^N\tilde{a}_k^2\right)^{\frac 12}\nonumber\displaybreak[0]\\
   &\leq CA_N^{\frac 32}+CtA_N^{\frac 12}\tilde{A}_N\nonumber\\ &\leq C\Phi_N^{\frac 32}+Ct\Phi_N^{\frac 12}\Psi_N,
\end{align}
where we used $a_0\leq A_N^{\frac 12}$, the Cauchy--Schwarz inequality and the elementary fact that
  $\sum_{i=0}^{\infty}\frac 1{(i+2)^2}<1$.
 For the sum of $I_2(k)$,
\begin{align}\label{lem-4-5I2}
  \sum_{k=1}^N I_2(k) &=  \sum_{k=1}^N\left(C\tilde{b}_{k+1}a_0^{\frac 12}a_k+Cb_0b_ka_k+Ct\sum_{i=1}^{k}\frac{\tilde{b}_ib_{k-i}\tilde{a}_k}{k+1}\right)\nonumber\displaybreak[0]\\
  &\leq C\tilde{B}_{N+1}^{\frac 12}A_N^{\frac 34}+CB_NA_N^{\frac 12}+Ct\sum_{k=1}^N\frac{k^{\frac 12}}{k+1}\left(\sum_{i=1}^{k}\tilde{b}_i^2b_{k-i}^2\right)^{\frac 12}\tilde{a}_k\nonumber\displaybreak[0]\\
 &\leq C\tilde{B}_{N+1}^{\frac 12}A_N^{\frac 34}+CB_NA_N^{\frac 12}+Ct\tilde{B}_N^{\frac 12}B_N^{\frac 12}\tilde{A}_N^{\frac 12}\nonumber\displaybreak[0]\\
  &\leq C\tilde{B}_{N+1}^{\frac 12}\Phi_N^{\frac 34}+C\Phi_N^{\frac 32}+Ct\Phi_N^{\frac 12}\Psi_N,
\end{align}
where we used the elementary inequality
\begin{equation*}
  \left(\sum_{i=1}^k\alpha_i\right)^2\leq k\sum_{i=1}^k\alpha_i^2,\quad \textrm{for }\alpha_i\geq 0.
\end{equation*}
We can similarly estimate the sum of $I_3(k)$:
\begin{align}\label{lem-4-5I3}
   &\sum_{k=1}^N I_3(k)  =C\sum_{k=1}^N\frac {t^{k}}{(k+1)!^2}|\nabla^k(T*T)||Rm||\nabla^kRm|\nonumber\\
  &\quad\qquad\qquad +Ct\sum_{k=1}^N\sum_{i=0}^{k-1}\frac {\tilde{a}_{k-i}}{k+1}\left(\frac {t^{\frac i2}|\nabla^i(T*T)|}{(i+1)!}\right)\tilde{a}_k\nonumber\\
 & \leq Cta_0\sum_{k=1}^N\sum_{i=0}^k\frac {a_k}{k+1}b_{i-1}b_{k-i-1}  +Ct^2\sum_{k=1}^N\sum_{i=0}^{k-1}\frac {\tilde{a}_{k-i}}{k+1}\left(\sum_{j=0}^i\frac {b_{j-1}b_{i-j-1}}{i+1}\right)\tilde{a}_k\nonumber\\
  &\leq Cta_0(B_N+b_{-1}^2)A_N^{\frac 12}+Ct^2(B_N+b_{-1}^2)\tilde{A}_N\nonumber\\
  &\leq Ct\Phi_N^2+C\Phi_N^{\frac 32}+Ct^2\Phi_N\Psi_N+Ct\Phi_N^{\frac 12}\Psi_N,
\end{align}
where in the third inequality we used that $a_0\leq A_N^{\frac 12}$ and $b_{-1}^2\leq t^{-1}A_N^{\frac 12}$.
Combining \eqref{lem-4-5-pf2}--\eqref{lem-4-5I3}, we conclude that
\begin{align*}
  \left(\frac{\pt}{\pt t}-\Delta\right)A_N \leq & -2\tilde{A}_{N+1}+\sum_{k=1}^N\frac {k\tilde{a}_k^2}{(k+1)^2}+C\tilde{B}_{N+1}^{\frac 12}\Phi_N^{\frac 34}+C\tilde{B}_N^{\frac 12}\Phi_N^{\frac 34}\\
  &+C\Phi_N^{\frac 32}(1+t\Phi_N^{\frac 12})+C(t\Phi_N^{\frac 12}+t^2\Phi_N)\Psi_N\nonumber\\
  \leq &-\frac 74\tilde{A}_{N+1}+\frac 14\Psi_{N+1}+C(t\Phi_N^{\frac 12}+t^2\Phi_N)\Psi_N\nonumber\\
  & +C\Phi_N^{\frac 32}(1+t\Phi_N^{\frac 12}),
\end{align*}
where we used $\frac k{(k+1)^2}\leq 1/4$, $\tilde{B}_{N}\leq \tilde{B}_{N+1}\leq \Psi_{N+1}$ and Cauchy--Schwarz.
\endproof

\begin{lem}\label{lem-4-6}
Suppose that $\varphi(t), t \in [0,T_0]$ is a solution to the Laplacian flow \eqref{Lap-flow-def} 
on a compact  manifold $\M$. There exists a universal constant $C$ such that
\begin{align}\label{evl-tors-2}
\left(\frac{\pt}{\pt t}-\Delta\right)B_N \leq & -\frac 74\tilde{B}_{N+1}+\frac 14\Psi_{N+1}+C\biggl(t\Phi_N^{\frac 12}+t^2\Phi_N+t^3\Phi_N^{\frac 32}\biggr)\Psi_N\nonumber\\
&+C\Phi_N^{\frac 32}\left(1+t\Phi_N^{\frac 12}\right).
\end{align}
\end{lem}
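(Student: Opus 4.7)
The plan is to mirror the proof of Lemma~\ref{lem-4-5} almost verbatim, with the evolution inequality \eqref{evl-tors-0} for $T$ taking the place of \eqref{evl-Rm-0} for $Rm$, and the rescaling adjusted to the definition $b_k = t^{k/2}|\nabla^{k+1}T|/(k+1)!$. The essential new feature is that \eqref{evl-tors-0} contains a purely cubic nonlinearity $T*T*T$ that has no analogue in \eqref{evl-Rm-0}; this will be responsible for the extra $t^3\Phi_N^{3/2}$ term on the right-hand side of \eqref{evl-tors-2}.

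First I would apply the commutator formula \eqref{prop-commu-heat} with $A=T$ (so $p=2$) and order $k+1$ in place of $k$, then substitute $\nabla^{k+1}$ applied to \eqref{evl-tors-0}. This yields a schematic bound for $(\pt/\pt t-\Delta)\nabla^{k+1}T$ whose right-hand side is a sum of: (a) quadratic contractions $\nabla^i Rm * \nabla^{k+1-i}T$ and $\nabla^i Rm * \nabla^{k+2-i}\varphi$, together with commutator tails $\nabla^i(T*T)*\nabla^{k+1-i}T$; and (b) cubic contractions $\nabla^{k+1}(\nabla T * T * \varphi)$ and $\nabla^{k+1}(T*T*T)$, expanded via Leibniz into triple sums. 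I would then compute $(\pt/\pt t-\Delta)|\nabla^{k+1}T|^2$ using \eqref{flow-g^-1} and \eqref{h-tensor-1} to extract the leading good term $-2|\nabla^{k+2}T|^2$. Multiplying by $t^k/(k+1)!^2$ converts this into
\[
\left(\frac{\pt}{\pt t}-\Delta\right)b_k^2 \leq -2\tilde{b}_{k+1}^2 + \frac{k}{(k+1)^2}\tilde{b}_k^2 + J_1(k)+J_2(k)+J_3(k),
\]
where $J_1, J_2, J_3$ collect the quadratic, cubic, and commutator contributions expressed in the normalized variables $a_k, b_k, c_k, \tilde{a}_k, \tilde{b}_k, \tilde{c}_k$.

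Next I would sum from $k=0$ to $N$ and bound each $\sum_k J_j$ using Cauchy--Schwarz together with the elementary inequalities $\sum_{i\geq 0}(i+2)^{-2}<1$ and $\bigl(\sum_{i=1}^k\alpha_i\bigr)^2\leq k\sum_{i=1}^k\alpha_i^2$, exactly following the template of \eqref{lem-4-5I1}--\eqref{lem-4-5I3}. The quadratic and commutator contributions produce terms of the form $C\Phi_N^{3/2}(1+t\Phi_N^{1/2})$ and $C(t\Phi_N^{1/2}+t^2\Phi_N)\Psi_N$ of exactly the same shape as for $A_N$. The cubic contributions from $\nabla^{k+1}(T*T*T)$, after distributing Leibniz and applying Cauchy--Schwarz twice, rescale into products of three $b$-variables and one $\tilde{b}$-variable; since each $b$-factor carries an additional $t^{1/2}$ compared with the corresponding $a$-factor, this yields a new term controlled by $Ct^3\Phi_N^{3/2}\Psi_N$, accounting precisely for the $t^3\Phi_N^{3/2}$ summand inside the bracket of \eqref{evl-tors-2}. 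Finally, $\frac{k}{(k+1)^2}\leq 1/4$ and $\tilde{B}_{N+1}\leq \Psi_{N+1}$ let me absorb the residual $\tilde{b}_k$ contributions into $\frac{1}{4}\Psi_{N+1}$.

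The hard part will be the careful bookkeeping of $t$-weights when distributing $\nabla^{k+1}$ across the cubic tensors $\nabla T * T * \varphi$ and $T * T * T$: because $b_k$ attaches a factor of $t^{k/2}$ to one more derivative than $a_k$ does, the cubic terms inflate the total $t$-power by one more factor of $t$ than appears in Lemma~\ref{lem-4-5}, producing the genuinely new $t^3\Phi_N^{3/2}\Psi_N$ contribution. One must also verify, using index shifts identical to those in Lemma~\ref{lem-4-5}, that the only derivative of order exceeding $k+1$ appearing on the right-hand side is the leading $-2|\nabla^{k+2}T|^2$, so that after summation no derivative of order beyond $N+2$ survives unabsorbed on the right.
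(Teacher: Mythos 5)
Your proposal follows essentially the same route as the paper: apply the commutator formula \eqref{prop-commu-heat} with $A=T$ (so $p=2$) at order $k+1$, substitute the evolution inequality \eqref{evl-tors-0}, pass to the normalized quantities $b_k,\tilde b_k$, and sum with Cauchy--Schwarz exactly as in Lemma \ref{lem-4-5}. One small caveat on your final verification point: besides the good term $-2\tilde b_{k+1}^2$, the $i=k+1$ endpoints of the Leibniz expansions also produce top-order factors ($\nabla^{k+1}Rm$ and $\nabla^{k+2}T$ inside $\nabla^{k+1}(\nabla T*T)$, i.e.\ $\tilde a_{k+1}$ and $\tilde b_{k+1}$), and it is precisely these --- not just ``residual $\tilde b_k$ contributions'' --- that the $\tfrac14\Psi_{N+1}$ term is needed to absorb, as in the paper's estimates \eqref{lem-4-6-II1}--\eqref{lem-4-6-II3}.
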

\proof
 By \eqref{evl-tors-0} and \eqref{prop-commu-heat} (with $A=T$ so $p=2$), we have
\begin{align}\label{lem-4-6-pf2}
   &\left(\frac{\pt}{\pt t}-\Delta\right)\nabla^{k+1}T \lessapprox
   150\sum_{i=0}^{k+1}\binom{k+3}{i+2}\nabla^iRm*\nabla^{k+1-i}T\nonumber\\
  &+\sum_{i=0}^{k+1}\binom{k+1}{i}\nabla^iRm*\nabla^{k+2-i}\varphi+
  11\sum_{i=0}^{k+1}\binom{k+1}{i}\nabla^{i}(\nabla T*T)*\nabla^{k+1-i}\varphi\nonumber\\
   &+43\sum_{i=1}^{k+1}\binom{k+2}{i+1}\nabla^{i}(T*T)*\nabla^{k+1-i} T+4T*T*\nabla^{k+1}T.
\end{align}
From the definition \eqref{def-ak} of $b_k$ and using $|T|^2\leq |Rm|$ we have
\begin{align}\label{lem-4-6-pf3}
  \bigg(\frac{\pt}{\pt t}&-\Delta\bigg)b_k^2 =\frac{t^k}{(k+1)!^2}\left(\frac{\pt}{\pt t}-\Delta\right)|\nabla^{k+1}T|^2+\frac k{(k+1)^2}\tilde{b}_k^2 \nonumber\\
  &=   \frac{2t^k}{(k+1)!^2}\bigg\langle\left(\frac{\pt}{\pt t}-\Delta\right)\nabla^{k+1}T,\nabla^{k+1}T\bigg\rangle-2\tilde{b}_{k+1}^2+\frac k{(k+1)^2}\tilde{b}_k^2\nonumber\\
  &\quad+\frac{2t^k(k+3)}{(k+1)!^2}\left(Rc+\frac 13|T|^2g+2T*T\right)*\nabla^{k+1}T*\nabla^{k+1}T\nonumber\\
 & \leq  \frac{2t^k}{(k+1)!^2}\bigg\langle\left(\frac{\pt}{\pt t}-\Delta\right)\nabla^{k+1}T,\nabla^{k+1}T\bigg\rangle-2\tilde{b}_{k+1}^2\nonumber\\  &\quad
  +\frac k{(k+1)^2}\tilde{b}_k^2+C(k+3)|Rm|b_k^2.
\end{align}
 Substituting \eqref{lem-4-6-pf2} into \eqref{lem-4-6-pf3} and rearranging terms gives:
\begin{align}\label{lem-4-6-pf4}
  \left(\frac{\pt}{\pt t}-\Delta\right)&b_k^2 \leq -2\tilde{b}_{k+1}^2+\frac {k\tilde{b}_k^2}{(k+1)^2}+II_1(k)+\cdots+II_4(k),
\end{align}
where 
\begin{align*}
 II_1(k) =&~\frac{Ct^k}{(k+1)!^2}\sum_{i=0}^{k+1}\binom{k+3}{i+2}|\nabla^iRm||\nabla^{k+1-i}T||\nabla^{k+1}T|\nonumber\\
  II_2(k)=&~\frac{t^k}{(k+1)!^2}\sum_{i=0}^{k+1}\binom{k+1}{i}|\nabla^iRm||\nabla^{k+2-i}\varphi||\nabla^{k+1}T|\nonumber\\
 II_3(k) =&~\frac{Ct^k}{(k+1)!^2}\sum_{i=0}^{k+1}\binom{k+1}{i}|\nabla^{i}(\nabla T*T)||\nabla^{k+1-i}\varphi||\nabla^{k+1}T|\nonumber\\
  II_4(k)= &~\frac{Ct^k}{(k+1)!^2}\sum_{i=1}^{k+1}\binom{k+2}{i+1}|\nabla^{i}(T*T)||\nabla^{k+1-i} T||\nabla^{k+1}T|.
\end{align*}
Note that we have absorbed $C(k+3)|Rm|b_k^2$ in \eqref{lem-4-6-pf3} into $II_1(k)$ of \eqref{lem-4-6-pf4}. To derive the evolution inequality of $B_N$, we start with \eqref{lem-4-6-pf4} for $k=0$:
\begin{align}\label{lem-4-6-pf5}
  \left(\frac{\pt}{\pt t}-\Delta\right)b_0^2\leq &-2\tilde{b}_1^2+C\left(a_0b_0^2+\tilde{a}_1a_0^{\frac 12}b_0+a_0b_0c_0 +\tilde{b}_1a_0^{\frac 12}b_0+b_0^3\right)\nonumber\displaybreak[0]\\
  \leq &-2\tilde{b}_1^2+C\Phi_N^{\frac 32}+C\Psi_N^{\frac 12}\Phi_N^{\frac 34}.
\end{align}
By summing $II_1(k),II_2(k),II_3(k)$ over $k=1,\cdots,N$, we have the following estimates:
\begin{align}\label{lem-4-6-II1}
  \sum_{k=1}^NII_1(k) &=  C\sum_{k=1}^N\left(\tilde{a}_{k+1}a_0^{\frac 12}b_k+t^{\frac 12}b_0a_k\tilde{b}_k\right)\nonumber\\
  &+Ct\sum_{k=1}^N\sum_{i=0}^{k-1}\left(\frac 1{i+2}+\frac 1{k+1-i}\right)a_i\tilde{b}_{k-i}\tilde{b}_k\nonumber\\
 & \leq C\tilde{A}_{N+1}^{\frac 12}A_N^{\frac 14}B_N^{\frac 12}+Ct^{\frac 12}B_N^{\frac 12}A_N^{\frac 12}\tilde{B}_N^{\frac 12}+CtA_N^{\frac 12}\tilde{B}_N\nonumber\displaybreak[0]\\
  &\leq C\Psi_{N+1}^{\frac 12}\Phi_N^{\frac 34}+Ct^{\frac 12}\Phi_N\Psi_N^{\frac 12}+Ct\Phi_N^{\frac 12}\Psi_N;\\
\label{lem-4-6-II2}
  \sum_{k=1}^NII_2(k) &=  C\sum_{k=1}^N\left(a_0b_kc_k+a_0^{\frac 12}\tilde{a}_{k+1}b_k\right)+Ct\sum_{k=1}^N\sum_{i=1}^{k}\frac{\tilde{a}_ic_{k-i}\tilde{b}_k}{k+1}\nonumber\\
 & \leq CA_N^{\frac 12}B_N^{\frac 12}C_N^{\frac 12}+CA_N^{\frac 14}\tilde{A}_{N+1}^{\frac 12}B_N^{\frac 12}+Ct\tilde{A}_N^{\frac 12}C_N^{\frac 12}\tilde{B}_N^{\frac 12}\nonumber\\
  &\leq C\Phi_N^{\frac 32}+C\Psi_{N+1}^{\frac 12}\Phi_N^{\frac 34}+Ct\Phi_N^{\frac 12}\Psi_N;\displaybreak[0]\\
\label{lem-4-6-II3}
\sum_{k=1}^NII_3(k)& \leq Ct\sum_{k=1}^N\frac {\tilde{b}_k}{k+1}\sum_{i=0}^{k+1}\tilde{b}_ib_{k-i}\nonumber\\
&\quad+Ct^2\sum_{k=1}^N\frac {\tilde{b}_k}{k+1}\sum_{i=0}^{k}\left(\sum_{j=0}^i\frac {\tilde{b}_jb_{i-j-1}}{k+1-i}\right)c_{k-i-1}\nonumber\\
&\leq Ct(\tilde{B}_{N+1}+\tilde{b}_0^2)^{\frac 12}(B_N+b_{-1}^2)^{\frac 12}\tilde{B}_N^{\frac 12}\nonumber\\
 & \quad+Ct^2\sum_{k=1}^N\left(\sum_{i=0}^{k}\sum_{j=0}^i\tilde{b}^2_jb^2_{i-j-1}c^2_{k-i-1}\right)^{\frac 12}\tilde{b}_k\nonumber\\
 &\leq Ct(\tilde{B}_{N+1}+\tilde{b}_0^2)^{\frac 12}(B_N+b_{-1}^2)^{\frac 12}\tilde{B}_N^{\frac 12}\nonumber\\
 &\quad+C_nt^2(\tilde{B}_{N}+\tilde{b}_0^2)^{\frac 12}(B_N+b_{-1}^2)^{\frac 12}(C_N+c_{-1}^2)^{\frac 12}\tilde{B}_N^{\frac 12}\nonumber\\
&\leq C_n\Psi_{N+1}^{\frac 12}\Psi_N^{\frac 12}t^{\frac 12}\Phi_N^{\frac 14}(1+t\Phi_N^{\frac 12})^{\frac 12}+C_n\Phi_N^{\frac 34}(1+t\Phi_N^{\frac 12})^{\frac 12}\Psi_N^{\frac 12}\nonumber\\
&\quad+C_n(t^2\Phi_N+t\Phi_N^{\frac 12})\Psi_N+C_nt^{\frac 12}\Phi_N(1+t\Phi_N^{\frac 12})\Psi_N^{\frac 12},
\end{align}
where we used $b_{-1}^2\leq t^{-1}a_0$ and $c_{-1}^2\leq t^{-1}a_0$. Finally,
\begin{align}\label{lem-4-6-II4}
 & \sum_{k=1}^NII_4(k)=  C\sum_{k=1}^N\sum_{i=1}^{k+1}\frac{(k+2)b_{k-i}\tilde{b}_k}{(k+1)(i+1)}\left(ta_0^{\frac 12}b_{i-1}+t^2\sum_{j=1}^i\frac 1j\tilde{b}_{j-1}b_{i-j-1}\right)\nonumber\displaybreak[0]\\
  &\leq Cta_0^{\frac 12}B_N^{\frac 12}(B_N+b_{-1}^2)^{\frac 12}\tilde{B}_N^{\frac 12}+Ct^2\sum_{k=1}^N\left(\sum_{i=1}^{k+1}\sum_{j=1}^i\tilde{b}_{j-1}^2b_{i-j-1}^2b_{k-i}^2\right)^{\frac 12}\tilde{b}_k\nonumber\displaybreak[0]\\
  &\leq Cta_0^{\frac 12}B_N^{\frac 12}(B_N+b_{-1}^2)^{\frac 12}\tilde{B}_N^{\frac 12}+Ct^2\left(\tilde{B}_N+\tilde{b}_0^2\right)^{\frac 12}(B_N+b_{-1}^2)\tilde{B}_N^{\frac 12}\nonumber\displaybreak[0]\\
  &\leq Ct^{\frac 12}\Phi_N(1+t\Phi_N^{\frac 12})^{\frac 12}\Psi_N^{\frac 12}+Ct^{\frac 12}\Phi_N(1+t\Phi_N^{\frac 12})\Psi_N^{\frac 12}+C(t^2\Phi_N+t\Phi_N^{\frac 12})\Psi_N.
\end{align}
Combining the above inequalities \eqref{lem-4-6-pf5}--\eqref{lem-4-6-II4}, we have
\begin{align*}
\left(\frac{\pt}{\pt t}-\Delta\right)B_N &\leq -\frac 74\tilde{B}_{N+1}+C\Phi_N^{\frac 32}+C\Psi_N^{\frac 12}\Phi_N^{\frac 34}+C\Psi_{N+1}^{\frac 12}\Phi_N^{\frac 34}\nonumber\\
&+C\Psi_{N+1}^{\frac 12}\Psi_N^{\frac 12}t^{\frac 12}\Phi_N^{\frac 14}(1+t\Phi_N^{\frac 12})^{\frac 12}\nonumber\displaybreak[0]\\
&+C\Phi_N^{\frac 34}(1+t\Phi_N^{\frac 12})^{\frac 12}\Psi_N^{\frac 12}+C(t^2\Phi_N+t\Phi_N^{\frac 12})\Psi_N\nonumber\displaybreak[0]\\
&+Ct^{\frac 12}\Phi_N(1+t\Phi_N^{\frac 12})\Psi_N^{\frac 12}+Ct^{\frac 12}\Phi_N(1+t\Phi_N^{\frac 12})^{\frac 12}\Psi_N^{\frac 12}.
\end{align*}
Noting that $\Psi_N\leq \Psi_{N+1}$ and applying Cauchy--Schwarz to the above inequality gives \eqref{evl-tors-2}.
\endproof

\begin{lem}\label{lem-4-7}
Suppose that $\varphi(t), t \in [0,T_0]$ is a solution to the Laplacian flow \eqref{Lap-flow-def} on a compact manifold $\M$. There exists a universal constant $C$ 
 such that
\begin{align}\label{lem-4-7-est}
  \left(\frac{\pt}{\pt t}-\Delta\right)C_N\leq& -\frac 74\tilde{C}_{N+1}+\frac 14\Psi_{N+1}+C\Phi_N^{\frac 32}\left(1+t\Phi_N^{\frac 12}+t^2\Phi_N\right)\nonumber\\
  &+C\Psi_N\left(t\Phi_N^{\frac 12}+t^2\Phi_N+t^3\Phi_N^{\frac 32}+t^4\Phi_N^2\right).
\end{align}
\end{lem}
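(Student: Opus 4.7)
My plan is to follow the template set by the proofs of Lemma~\ref{lem-4-5} and Lemma~\ref{lem-4-6}, using the evolution inequality \eqref{evl-nab-var-0} for $\nabla\varphi$ in place of \eqref{evl-Rm-0} or \eqref{evl-tors-0}. The key observation is that $\nabla^{k+2}\varphi=\nabla^{k+1}(\nabla\varphi)$, so I would apply the commutator formula \eqref{prop-commu-heat} to $A=\nabla\varphi$ (a $4$-tensor, so $p=4$) with $k+1$ in place of $k$, and then expand $\nabla^{k+1}$ acting on each of the six terms on the right-hand side of \eqref{evl-nab-var-0} via the Leibniz rule.

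This produces an expression for $(\tfrac{\pt}{\pt t}-\Delta)\nabla^{k+2}\varphi$ as a binomial-weighted sum of double and triple convolutions of $\nabla^\bullet Rm$, $\nabla^\bullet T$, $\nabla^\bullet\varphi$. As in Lemmas~\ref{lem-4-5} and \ref{lem-4-6}, I would next compute $(\tfrac{\pt}{\pt t}-\Delta)|\nabla^{k+2}\varphi|^2$ using \eqref{flow-g^-1}, absorbing the resulting $(k+5)(Rc+\tfrac13|T|^2g+2T*T)*\nabla^{k+2}\varphi*\nabla^{k+2}\varphi$ factor into the $|Rm|\,|\nabla^{k+2}\varphi|^2$ contribution. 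The outcome is a pointwise inequality
\begin{equation*}
\left(\frac{\pt}{\pt t}-\Delta\right)c_k^2 \leq -2\tilde{c}_{k+1}^2+\frac{k\,\tilde{c}_k^2}{(k+1)^2}+III_1(k)+\cdots+III_m(k),
\end{equation*}
with each $III_j(k)$ the weighted norm of one of the convolution sums, carrying the prefactor $t^k/(k+1)!^2$.

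Summing from $k=0$ to $N$ and applying Cauchy--Schwarz together with the elementary inequalities $\sum_{i\geq 0}(i+2)^{-2}<1$ and $\bigl(\sum_i\alpha_i\bigr)^2\leq k\sum_i\alpha_i^2$, I would bound each $\sum_k III_j(k)$ by a product drawn from $\{A_N^{1/2},B_N^{1/2},C_N^{1/2},\tilde{A}_N^{1/2},\tilde{B}_N^{1/2},\tilde{C}_{N+1}^{1/2}\}$ times an appropriate power of $t$. The bare-tensor inputs $|\varphi|^2=7$, $|\nabla\varphi|^2=|T|^2\leq a_0$, $b_{-1}^2\leq t^{-1}a_0$ and $c_{-1}^2\leq t^{-1}a_0$ control the $\nabla^0$ contributions. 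The four-factor products in \eqref{evl-nab-var-0}, such as $Rm*\varphi*T*\varphi$ and $Rm*\varphi*\nabla\varphi*\varphi$, are precisely the source of the $t^3\Phi_N^{3/2}$ and $t^4\Phi_N^2$ terms appearing in \eqref{lem-4-7-est}. A Young-type absorption of the resulting $C\tilde{C}_{N+1}^{1/2}\Phi_N^{3/4}$ contributions into $\tfrac14\Psi_{N+1}+C\Phi_N^{3/2}$ then yields the stated bound.

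The principal obstacle I anticipate is the sheer combinatorial complexity of \eqref{evl-nab-var-0}: there are six distinct convolution shapes, several of them quartic, and each splits into a binomial-weighted double or triple sum after applying $\nabla^{k+1}$. The bookkeeping required to identify the ``free'' highest-order derivative in each sub-term and then to rearrange the binomial coefficients into a Cauchy--Schwarz-compatible form (so that one factor of $\binom{k+1}{i+1}$ is absorbed into the normalisations in \eqref{def-ak}--\eqref{def-ak-tilde}, and the remaining combinatorial weight is summable via $\sum(i+2)^{-2}<1$) is more elaborate than in the two previous lemmas, but no new analytic ideas beyond those already used there should be required.
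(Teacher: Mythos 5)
Your proposal matches the paper's proof essentially step for step: the paper applies \eqref{prop-commu-heat} to $A=\nabla\varphi$ with $p=4$, expands \eqref{evl-nab-var-0} by the Leibniz rule into six convolution sums $III_1(k),\dots,III_6(k)$, treats $k=0$ separately, and estimates each sum exactly as you describe, using $|\nabla\varphi|\leq a_0^{1/2}$, $b_{-1}^2,c_{-1}^2\leq t^{-1}a_0$ and Cauchy--Schwarz before a final absorption into $\tfrac14\Psi_{N+1}$. The only minor imprecision is attributing the $t^3\Phi_N^{3/2}$ and $t^4\Phi_N^2$ factors solely to the quartic terms $Rm*\varphi*(T*\varphi+\nabla\varphi*\varphi)$; in the paper's estimates these powers also arise from the triple convolutions generated by $\nabla^{i}(\nabla T*T)*\nabla^{k+1-i}\varphi$, but this does not affect the argument.
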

\proof
By \eqref{evl-nab-var-0} and \eqref{prop-commu-heat} (with $A=\nabla\varphi$ so $p=4$), we have
\begin{align}\label{lem-4-7-pf1}
&\left(\frac{\pt}{\pt t}-\Delta\right)\nabla^{k+2}\varphi\lessapprox  62\sum_{i=0}^{k+1}\binom{k+1}{i}\nabla^{i}( \nabla T*T)*\nabla^{k+1-i}\varphi\nonumber\\
&+
239\sum_{i=0}^{k+1}\binom{k+3}{i+2}\nabla^iRm*\nabla^{k+2-i}\varphi+\sum_{i=0}^{k+1}\binom{k+1}{i}\nabla^iRm*\nabla^{k+1-i}T\nonumber\\
&+\sum_{i=0}^{k+1}\binom{k+1}{i}\nabla^{i}(Rm*\varphi)*\biggl(\nabla^{k+1-i}(T*\varphi)+\nabla^{k+1-i}(\nabla\varphi*\varphi)\biggr)\nonumber\\
   &+ 89\sum_{i=1}^{k+1}\binom{k+2}{i+1}\nabla^{i}(T*T)*\nabla^{k+2-i}\varphi+24T*T*\nabla^{k+2}\varphi\nonumber\\
   &+6\sum_{i=0}^{k+1}\binom{k+1}{i}\nabla^{i}(\nabla T*\nabla\varphi)*\nabla^{k+1-i}\varphi.
\end{align}
By the the definition \eqref{def-ak} of $c_k$ and noting that $\nabla^{k+2}\varphi$ is an $(k+5)$-tensor, we have the following:
\begin{align}\label{lem-4-7-pf2}
  \left(\frac{\pt}{\pt t}-\Delta\right)&c_k^2 = \frac{t^k}{(k+1)!^2}\left(\frac{\pt}{\pt t}-\Delta\right)|\nabla^{k+2}\varphi|^2+\frac k{(k+1)^2}\tilde{c}_k^2 \nonumber\\
  &\leq  \frac{2t^k}{(k+1)!^2}\bigg\langle\left(\frac{\pt}{\pt t}-\Delta\right)\nabla^{k+2}\varphi,\nabla^{k+2}\varphi\bigg\rangle-2\tilde{c}_{k+1}^2 \nonumber\\&
  \quad +\frac k{(k+1)^2}\tilde{c}_k^2+C(k+5)|Rm|c_k^2.
\end{align}
Substituting \eqref{lem-4-7-pf1} into \eqref{lem-4-7-pf2}, we compute
\begin{align}\label{lem-4-7-pf3}
  &\left(\frac{\pt}{\pt t}-\Delta\right)c_k^2\leq  -2\tilde{c}_{k+1}^2+\frac k{(k+1)^2}\tilde{c}_k^2+III_1(k)+\cdots+III_6(k),
  \end{align}
  where
\begin{align*}
 III_1(k) &=C\frac{t^k}{(k+1)!^2}\sum_{i=1}^{k+1}\binom{k+1}{i}|\nabla^{i}( \nabla T*T)||\nabla^{k+1-i}\varphi||\nabla^{k+2}\varphi|,\displaybreak[0]\\
III_2(k)&=C\frac{t^k}{(k+1)!^2}\sum_{i=0}^{k+1}\binom{k+3}{i+2}|\nabla^iRm||\nabla^{k+2-i}\varphi||\nabla^{k+2}\varphi|,\displaybreak[0]\\
III_3(k)&=\frac{2t^k}{(k+1)!^2}\sum_{i=0}^{k+1}\binom{k+1}{i}|\nabla^iRm||\nabla^{k+1-i}T||\nabla^{k+2}\varphi|,\displaybreak[0]\\
III_4(k)&=\frac{2t^k}{(k+1)!^2}\sum_{i=0}^{k+1}\binom{k+1}{i}|\nabla^{i}(Rm*\varphi)|\biggl(|\nabla^{k+1-i}(T*\varphi)|\\
&\qquad\qquad\qquad\qquad\qquad\qquad\qquad+|\nabla^{k+1-i}(\nabla\varphi*\varphi)|\biggr)|\nabla^{k+2}\varphi|,\displaybreak[0]\\
  III_5(k)&=C\frac{t^k}{(k+1)!^2}\sum_{i=1}^{k+1}\binom{k+2}{i+1}|\nabla^{i}(T*T)||\nabla^{k+2-i}\varphi||\nabla^{k+2}\varphi|,\\
III_6(k)&=\frac{12t^k}{(k+1)!^2}\sum_{i=0}^{k+1}\binom{k+1}{i}|\nabla^{i}(\nabla T*\nabla\varphi)||\nabla^{k+1-i}\varphi||\nabla^{k+2}\varphi|.
\end{align*}
We now follow similar calculations to the proofs of Lemmas \ref{lem-4-5} and \ref{lem-4-6}.  For $k=0$ in \eqref{lem-4-7-pf3}:
\begin{align}\label{lem-4-7-pf4}
  \left(\frac{\pt}{\pt t}-\Delta\right)c_0^2\leq & -2\tilde{c}_1^2+C\biggl(\tilde{b}_1a_0^{\frac 12}c_0+b_0^2c_0 +b_0c_0^2+ a_0c_0^2\nonumber\\
   &\qquad+\tilde{a}_1a_0^{\frac 12}c_0+a_0b_0c_0+a_0^2c_0\biggr)\nonumber\\
   \leq &-2\tilde{c}_1^2+C\Phi_N^{\frac 32}+C\Psi_N^{\frac 12}\Phi_N^{\frac 34}.
\end{align}
We next estimate the sums of each of the six terms $III_1(k), \cdots, III_6(k)$ in \eqref{lem-4-7-pf3}.  Starting with $III_1(k)$ and using  $\tilde{b}_0=t^{-\frac 12}b_0$ and $b_{-1}^2=c_{-1}^2\leq t^{-1}a_0$:
\begin{align}\label{lem-4-7-III1}
  \sum_{k=1}^N III_1(k)& \leq C\sum_{k=1}^N\frac{t^k}{(k+1)!^2}|\nabla^{k+1}( \nabla T*T)||\varphi||\nabla^{k+2}\varphi|\nonumber\\
  &+C\sum_{k=1}^N\frac{t^k}{(k+1)!^2}\sum_{i=1}^{k} \binom{k+1}{i}|\nabla^{i}( \nabla T*T)||\nabla^{k+1-i}\varphi||\nabla^{k+2}\varphi|\nonumber\displaybreak[0]\\
  &\leq Ct\sum_{k=1}^N\frac 1{k+1}\sum_{i=0}^{k+1}\tilde{b}_ib_{k-i}\tilde{c}_k\nonumber\\
   &+Ct^2\sum_{k=1}^N\frac 1{k+1}\sum_{i=1}^{k}\frac 1{k+1-i}\sum_{j=0}^i\tilde{b}_jb_{i-j-1}c_{k-i-1}\tilde{c}_k \nonumber\displaybreak[0]\\
  & \leq Ct(\tilde{B}_{N+1}+\tilde{b}_0^2)^{\frac 12}(B_N+b_{-1}^2)^{\frac 12}\tilde{C}_N^{\frac 12} \nonumber\\
   &\quad +Ct^2(\tilde{B}_{N}+\tilde{b}_0^2)^{\frac 12}(B_N+b_{-1}^2)^{\frac 12}(C_N+c_{-1}^2)^{\frac 12}\tilde{C}_N^{\frac 12}\nonumber\\
  &\leq  C\Psi_{N+1}^{\frac 12}\Psi_N^{\frac 12}(t^2\Phi_N+t\Phi_N^{\frac 12})^{\frac 12}+C\Phi_N^{\frac 34}(1+t\Phi_N^{\frac 12})^{\frac 12}
  \Psi_N^{\frac 12}\nonumber\\
  &\quad+C(t^2\Phi_N+t\Phi_N^{\frac 12})^2\Psi_N+Ct^{\frac 12}\Phi_N(1+t\Phi_N^{\frac 12})\Psi_N^{\frac 12}.
\end{align}
 Using $|\nabla\varphi|\leq a_0^{\frac 12}\leq A_N^{\frac 14}$ for $III_2(k)$:
 
\begin{align}
  \sum_{k=1}^NIII_2(k) & \leq  Ct\sum_{k=1}^N\sum_{i=0}^{k-1}\frac {k+2}{k+1}\left(\frac 1{i+2}+\frac 1{k+1-i}\right)a_i\tilde{c}_{k-i}\tilde{c}_k \nonumber\displaybreak[0]\\
  & +C\sum_{k=1}^N\left(\tilde{a}_{k+1}|\nabla\varphi|c_k+\frac{k+3}{k+1}t^{\frac 12}\tilde{a}_kc_0c_k\right)\nonumber\displaybreak[0]\\
  &\leq  C tA_N^{\frac 12}\tilde{C}_N+C\tilde{A}_{N+1}^{\frac 12}A_N^{\frac 14}C_N^{\frac 12}+Ct^{\frac 12}\tilde{A}_N^{\frac 12}C_N\nonumber\\
  &\leq  C t\Phi_N^{\frac 12}\Psi_N+C\Psi_{N+1}^{\frac 12}\Phi_N^{\frac 34}+Ct^{\frac 12}\Phi_N\Psi_N^{\frac 12}.
\end{align}
Using $b_{-1}^2=c_{-1}^2\leq t^{-1}a_0$ again for $III_3(k)$ and $III_4(k)$:
\begin{align}
  \sum_{k=1}^NIII_3(k)&\leq  2t\sum_{k=1}^N\sum_{i=0}^{k-1}\frac 1{k+1}\tilde{a}_ib_{k-i}\tilde{c}_k +2\sum_{k=1}^N(\tilde{a}_{k+1}|T|c_k+t^{\frac 12}\tilde{a}_kb_0c_k)\nonumber\\
  &\leq  2t\tilde{A}_N^{\frac 12}B_N^{\frac 12}\tilde{C}_N^{\frac 12}+2\tilde{A}_{N+1}^{\frac 12}A_N^{\frac 14}C_N^{\frac 12}+2t^{\frac 12}\tilde{A}_N^{\frac 12}B_N^{\frac 12}C_N^{\frac 12}\nonumber\\
  &\leq  2t\Phi_N^{\frac 12}\Psi_N+2\Psi_{N+1}^{\frac 12}\Phi_N^{\frac 34}+2t^{\frac 12}\Phi_N\Psi_N^{\frac 12}.
\end{align}
\begin{align}
  \sum_{k=1}^N & III_4(k)
  \leq  2\sum_{k=1}^N\frac {t\tilde{c}_k}{k+1}\sum_{i=0}^{k+1}\left(\tilde{a}_i+t\sum_{j=0}^{i-1}\frac {\tilde{a}_jc_{i-j-2}}{i-j}\right)\nonumber\\&\qquad\qquad\qquad\qquad\qquad\qquad \times
  \left(b_{k-i}+t\sum_{l=0}^{k-i}\frac{(b_{l-1}+c_{l-1})c_{k-i-l-1}}{k+1-i-l}\right)\nonumber \displaybreak[0]\\
  &\leq 2t(\tilde{A}_{N+1}+\tilde{a}_0^2)^{\frac 12}\left(B_N+b_{-1}^2\right)^{\frac 12}\tilde{C}_N^{\frac 12}\nonumber\\
  &\quad+2t^2(\tilde{A}_{N}+\tilde{a}_0^2)^{\frac 12}(C_N+c_{-1}^2)^{\frac 12}(B_N+b_{-1}^2)^{\frac 12}\tilde{C}_N^{\frac 12}\nonumber\\
    &\quad+2t^2(\tilde{A}_{N+1}+\tilde{a}_0^2)^{\frac 12}(C_N+c_{-1}^2)^{\frac 12}(B_N+b_{-1}^2+C_N+c_{-1}^2)^{\frac 12}\tilde{C}_N^{\frac 12}\nonumber\\
  &\quad+2t^3(\tilde{A}_{N}+\tilde{a}_0^2)^{\frac 12}(C_N+c_{-1}^2)(B_N+b_{-1}^2+C_N+c_{-1}^2)^{\frac 12}\tilde{C}_N^{\frac 12}\nonumber\\
  &\leq 2\Psi_{N+1}^{\frac 12}\Psi_N^{\frac 12}(t^2\Phi_N+t\Phi_N^{\frac 12})^{\frac 12}+2\Phi_N^{\frac 34}(t\Phi_N^{\frac 12}+1)^{\frac 12}\Psi_N^{\frac 12}\nonumber\\
  &\quad+2\Psi_N(t^2\Phi_N+t\Phi_N^{\frac 12})+2\Phi_N^{\frac 34}(t\Phi_N^{\frac 12}+1)^{\frac 12}(t^2\Phi_N+t\Phi_N^{\frac 12})^{\frac 12}\Psi_N^{\frac 12}\nonumber\\
  &\quad+2\sqrt{2}\Psi_{N+1}^{\frac 12}\Psi_N^{\frac 12}(t^2\Phi_N+t\Phi_N^{\frac 12})+2\sqrt{2}t^{\frac 12}\Phi_N(t\Phi_N^{\frac 12}+1)\Psi_N^{\frac 12}\nonumber\\
  &\quad+2\Psi_N(t^2\Phi_N+t\Phi_N^{\frac 12})^{\frac 32}+2\Phi_N^{\frac 34}(t\Phi_N^{\frac 12}+1)^{\frac 12}(t^2\Phi_N+t\Phi_N^{\frac 12})\Psi_N^{\frac 12}.
  \end{align}
Finally, for $III_5(k)$ and $III_6(k)$ we have:
\begin{align}
  &\sum_{k=1}^NIII_5(k) \leq C\sum_{k=1}^N\sum_{i=1}^{k+1}\frac 1{i+1}\left(tb_{i-1}a_0^{\frac 12}+t^2\sum_{j=1}^i\frac 1j\tilde{b}_{j-1}b_{i-j-1}\right)c_{k-i}\tilde{c}_k \nonumber\\
 & \leq C\left(ta_0^{\frac 12}+t^2\left(\tilde{B}_N+\tilde{b}_0^2\right)^{\frac 12}\right)B_N^{\frac 12}(C_N+c_{-1}^2)^{\frac 12}\tilde{C}_N^{\frac 12}\nonumber\displaybreak[0]\\
  &\leq  C\Psi_N^{\frac 12}\Phi_N^{\frac 34}(t\Phi_N^{\frac 12}+1)+C\Psi_N^{\frac 12}t^{\frac 12}\Phi_N(t\Phi_N^{\frac 12}+1)+C(t^2\Phi_N+t\Phi_N^{\frac 12})\Psi_N;\displaybreak[0]\\
\label{lem-4-7-III6}
 & \sum_{k=1}^NIII_6(k)\leq  12\sqrt{7}t\sum_{k=1}^N\frac 1{k+1}\sum_{l=0}^{k+1}\tilde{b}_lc_{k-l}\tilde{c}_k\nonumber\\
  &\qquad\qquad\qquad+12t^2\sum_{k=1}^N\frac 1{k+1}\sum_{i=0}^{k}\frac {c_{k-i-1}\tilde{c}_k}{k+1-i}\left(\sum_{j=0}^i\tilde{b}_jc_{i-j-1}\right) \nonumber\\
 & \leq  Ct(\tilde{B}_{N+1}+\tilde{b}_0^2)^{\frac 12}(C_N+c_{-1}^2)^{\frac 12}\tilde{C}_N^{\frac 12}+Ct^2(\tilde{B}_{N}+\tilde{b}_0^2)^{\frac 12}(C_N+c_{-1}^2)\tilde{C}_N^{\frac 12}\nonumber\\
&\leq C\Psi_{N+1}^{\frac 12}\Psi_N^{\frac 12}(t^2\Phi_N+t\Phi_N^{\frac 12})^{\frac 12}+C(t^2\Phi_N+t\Phi_N^{\frac 12})\Psi_N\nonumber\\
  &\quad+C\Phi_N^{\frac 34}(t\Phi_N^{\frac 12}+1)^{\frac 12}\Psi_N^{\frac 12}+Ct^{\frac 12}\Phi_N(t\Phi_N^{\frac 12}+1)\Psi_N^{\frac 12}.
\end{align}
Combining \eqref{lem-4-7-pf3}--\eqref{lem-4-7-III6}, we obtain
\begin{align*}
  \left(\frac{\pt}{\pt t}-\Delta\right)&C_N\leq~ -\frac 74\tilde{C}_{N+1}+C\Phi_N^{\frac 32}+C\Psi_N^{\frac 12}\Phi_N^{\frac 34}+C\Psi_{N+1}^{\frac 12}\Phi_N^{\frac 34}\nonumber\\
  &+C\Psi_{N+1}^{\frac 12}\Psi_N^{\frac 12}(t^2\Phi_N+t\Phi_N^{\frac 12})^{\frac 12}+C\Phi_N^{\frac 34}(1+t\Phi_N^{\frac 12})^{\frac 12}\Psi_N^{\frac 12}\nonumber\displaybreak[0]\\
   &+C(t^2\Phi_N+t\Phi_N^{\frac 12})^2\Psi_N+Ct^{\frac 12}\Phi_N(1+t\Phi_N^{\frac 12})\Psi_N^{\frac 12}\nonumber\displaybreak[0]\\
   &+C\Psi_N(t^2\Phi_N+t\Phi_N^{\frac 12})+C\Psi_N(t^2\Phi_N+t\Phi_N^{\frac 12})^{\frac 32}\nonumber\displaybreak[0]\\
     &+2\Phi_N^{\frac 34}(t\Phi_N^{\frac 12}+1)^{\frac 12}(t^2\Phi_N+t\Phi_N^{\frac 12})^{\frac 12}\Psi_N^{\frac 12}\nonumber\\
  &+2\sqrt{2}\Psi_{N+1}^{\frac 12}\Psi_N^{\frac 12}(t^2\Phi_N+t\Phi_N^{\frac 12})+C\Psi_N^{\frac 12}\Phi_N^{\frac 34}(t\Phi_N^{\frac 12}+1)\nonumber\\
  &+2\Phi_N^{\frac 34}(t\Phi_N^{\frac 12}+1)^{\frac 12}(t^2\Phi_N+t\Phi_N^{\frac 12})\Psi_N^{\frac 12}.
  \end{align*}
The result \eqref{lem-4-7-est} follows by applying Cauchy--Schwarz.
\endproof

We can now combine our results to prove Theorem \ref{thm-deri-est}.
\begin{proof}[Proof of Theorem \ref{thm-deri-est}]
 The estimates in Lemmas \ref{lem-4-5}--\ref{lem-4-7} give the existence of a universal constant $C>0$ such that
\begin{align*}
  \left(\frac{\pt}{\pt t}-\Delta\right)&\Phi_N \leq  -\Psi_N+C\biggl(t\Phi_N^{\frac 12}+t^2\Phi_N+t^3\Phi_N^{\frac 32}+t^4\Phi_N^2\biggr)\Psi_N\nonumber\displaybreak[0]\\
  &\qquad\quad\!\!+C\Phi_N^{\frac 32}\biggl(1+t\Phi_N^{\frac 12}+t^2\Phi_N\biggr)\nonumber\\
  \leq & -\Psi_N+C\biggl(\left(1+t\Phi_N^{\frac 12}\right)^4-1\biggr)\Psi_N+C\Phi_N^{\frac 32}\left(1+t\Phi_N^{\frac 12}\right)^2.
\end{align*}
Let $\tau_N$ be the time
\begin{align*}
  \tau_N:= & \sup\{a\in [0,T_0]~|~t\Phi_N^{\frac 12}(x,t)\leq \left(C^{-1}+1\right)^{\frac 14}-1,\,\forall(x,t)\in \M\times [0,a]\}.
\end{align*}
Then on $\M\times [0,\tau_N]$, we have
\begin{align}\label{thm-deri-pf2}
  \left(\frac{\pt}{\pt t}-\Delta\right)&\Phi_N \leq C\Phi_N^{\frac 32}\left(C^{-1}+1\right)^{\frac 12}\leq  (1+C)\Phi_N^{\frac 32}.
\end{align}
Since the initial value of $\Phi_N$ is bounded by
\begin{align*}
  \Phi_N(x,0)\leq &~|Rm|^2(x,0)+|\nabla T|^2(x,0)+|\nabla^2\varphi|^2(x,0)\\
  \leq &~|Rm|^2(x,0)+|\nabla T|^2(x,0) 
  +2|\nabla T|^2|\psi|^2(x,0)+32|T|^4|\varphi|^2(x,0)\\
  \leq &~225\sup_{\M}\Lambda(\cdot,0)^2=225K_0^2,
\end{align*}
and $\M$ is compact, applying the maximum principle to \eqref{thm-deri-pf2} gives
\begin{equation}\label{thm-deri-pf3}
  \Phi_N(x,t)\leq \frac{225K_0^2}{(1-\frac 12(1+C)15K_0t)^2}
\end{equation}
on $\M\times [0,\tau_N]$. Let $\alpha>0$ be the universal constant
\begin{align*}
  \alpha:= & \frac 1{15}\left(\frac 12(1+C)+\left((C^{-1}+1)^{\frac 14}-1\right)^{-1}\right)^{-1}.
\end{align*}
 Denote
\begin{equation}\label{t*-eq}
  T_*=T_*(T_0,K_0):=\min\{T_0,\frac{\alpha}{K_0} \}>0.
\end{equation}
 By definition, $\tau_N\geq T_*$ for all $N\in \mathbb{N}$.  In particular,  \eqref{thm-deri-pf3} holds
on $\M\times [0,T_*]$ for all $N\in \mathbb{N}$. When $t\leq T_*$, the right hand side of \eqref{thm-deri-pf3} is bounded above by a positive constant $C_*$ depending only on $T_0$ and $K_0$.
\end{proof}

\subsection{Completing the proof}

Suppose  $\varphi(t), t \in [0,T_0]$ solves the Laplacian flow \eqref{Lap-flow-def} on a compact  manifold $\M$. Theorem \ref{thm-deri-est} implies that
\begin{equation*}
  t^{\frac k2}\left(|\nabla^kRm|(x,t)+|\nabla^{k+2}\varphi|(x,t)\right)\leq C_*(k+1)!
\end{equation*}
on $\M\times [0, T_*]$, where $T_*$ is given in \eqref{t*-eq}. Since $k+1\leq 2^k$, for any fixed $t\in (0,T_*]$, we have
\begin{equation*}
 |\nabla^kRm|(x,t)+|\nabla^{k+2}\varphi|(x,t)\leq Ck!r^{-k-2},
\end{equation*}
where $r=\sqrt{t}/2$ and $C=C_*T_*/4$ are uniform constants. Thus by Lemma \ref{lem-3-1} and the discussion following it, we conclude that $(\M,\varphi(t),g(t))$ is real analytic for each $t\in (0,T_*]$. Theorem \ref{mainthm-local} in the case when $U=M$ follows by iterating the above argument to cover the entire time interval $t\in (0,T_0]$.

\section{Local real analyticity}

In this section, we localize the discussion in \S \ref{sec:global} using a cut-off function to prove Theorem \ref{mainthm-local}. First, we show the existence of the required  function.

\begin{lem}\label{lem6-1}
Suppose $\varphi(t), t\in [0,T_0]$ is a smooth solution to the Laplacian flow \eqref{Lap-flow-def} on an open subset $U\subset \M$. Let $p\in U$ and $r>0$ so that $\overline{B}_{g(0)}(p,2r)\subset U$ is compact.  Let $\alpha>0$ be a constant and suppose that
 \begin{equation}\label{lem6-1-cond1}
\Lambda(x,t)=\left(|Rm|^2(x,t)+|\nabla T|^2(x,t)\right)^{\frac 12}\leq K
 \end{equation}
for all $(x,t)\in B_{g(0)}(p,2r)\times [0,T_*]$, where $0<T_*\leq \alpha/K$.

There exist constants $C_0=C_0(\alpha,r), C_1=C_1(\alpha,K,r)$, and a cut-off function $\eta: U\ra [0,1]$ with support in $B_{g(0)}(p,r)$, and with $\eta=1$ in $B_{g(0)}(p,r/2)$ such that
\begin{align}
  |\nabla\eta(x)|^2_{g(t)}~\leq &~C_0\eta(x),\label{lem6-1-cond2}\\
  -\Delta_{g(t)}\eta(x)~\leq & ~C_1\label{lem6-1-cond3}
\end{align}
on $U\times [0,T_*]$.
\end{lem}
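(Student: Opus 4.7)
The plan is to build $\eta$ as the composition $\chi\circ \tilde d$ where $\tilde d$ is a smooth approximation of the $g(0)$-distance from $p$ and $\chi:\R\to[0,1]$ is a one-dimensional cut-off whose profile is chosen so that $(\chi')^2\lesssim \chi$. The key point is that the metrics $g(t)$ are uniformly equivalent to $g(0)$ on $U\times [0,T_*]$ because of the curvature hypothesis \eqref{lem6-1-cond1}, which lets one transfer bounds computed in $g(0)$ to bounds in $g(t)$ up to a constant depending only on $\alpha$.

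First I would establish metric equivalence. By \eqref{h-tensor-1}, $\partial_t g = 2h$ with $|h|_{g(t)}\lesssim |Rm|+|T|^2$, and $|T|^2=-R\lesssim |Rm|$ by Proposition \ref{Ric-prop}, so on $B_{g(0)}(p,2r)\times[0,T_*]$ we have $|\partial_t g|_{g(t)}\leq CK$. Integrating in $t$ and using $KT_*\leq\alpha$ gives $e^{-C\alpha}g(0)\leq g(t)\leq e^{C\alpha}g(0)$, and a similar estimate (using $|\nabla h|\lesssim |\nabla Rm|+|\nabla T|\cdot|T|$ together with the bound on $\Lambda$ and Proposition \ref{prop-nabla-T}) controls the difference of Christoffel symbols $|\Gamma(t)-\Gamma(0)|_{g(0)}\leq C(\alpha,K)$ on the same set.

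Next I would smooth the $g(0)$-distance function $d_0(x)=d_{g(0)}(p,x)$, which is only Lipschitz. The bound $|Rm|_{g(0)}\leq K$ on $B_{g(0)}(p,2r)$ gives a lower injectivity radius bound in terms of $K,r$, and by a Greene--Wu convolution (or a partition-of-unity construction in harmonic coordinates) one obtains a smooth $\tilde d$ on $B_{g(0)}(p,2r)$ with
\[
|\tilde d-d_0|\leq r/16,\qquad |\nabla \tilde d|_{g(0)}\leq 2,\qquad |\nabla^{g(0)}\nabla^{g(0)}\tilde d|_{g(0)}\leq C(K,r).
\]
Then I would pick a smooth non-increasing $\psi:\R\to[0,1]$ with $\psi\equiv 1$ on $(-\infty,5r/8]$ and $\psi\equiv 0$ on $[7r/8,\infty)$, set $\chi=\psi^2$ so that $(\chi')^2=4\psi^2(\psi')^2\leq (C/r^2)\chi$ and $|\chi''|\leq C/r^2$, and define $\eta=\chi\circ \tilde d$. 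By construction $\eta=1$ on $B_{g(0)}(p,r/2)$ and $\eta$ is supported in $B_{g(0)}(p,r)$.

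Verification of the two estimates is then routine. For the gradient, $|\nabla\eta|_{g(t)}^2=(\chi'(\tilde d))^2|\nabla \tilde d|_{g(t)}^2\leq (C/r^2)\,\chi(\tilde d)\,e^{C\alpha}|\nabla \tilde d|_{g(0)}^2\leq C_0(\alpha,r)\eta$, proving \eqref{lem6-1-cond2}. For the Laplacian,
\[
\Delta_{g(t)}\eta=\chi''(\tilde d)|\nabla\tilde d|_{g(t)}^2+\chi'(\tilde d)\,\Delta_{g(t)}\tilde d,
\]
and the first term is bounded by $C(\alpha,r)$, while
$\Delta_{g(t)}\tilde d=g(t)^{ij}\nabla^{g(0)}_i\nabla^{g(0)}_j\tilde d+g(t)^{ij}(\Gamma(0)-\Gamma(t))^k_{ij}\partial_k\tilde d$
is bounded by $C(\alpha,K,r)$ using Step~1 and the Hessian estimate for $\tilde d$. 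Combined with $|\chi'|\leq C/r$, this yields \eqref{lem6-1-cond3} with $C_1=C_1(\alpha,K,r)$.

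The main technical obstacle is the Greene--Wu smoothing step, since a priori we only have a pointwise bound on $Rm$ at $t=0$ on a relatively compact ball: one must check that the injectivity radius and higher-order geometric quantities needed for the smoothing are controlled in terms of $K$ and $r$ alone, so that the Hessian estimate $|\nabla^{g(0)}\nabla^{g(0)}\tilde d|\leq C(K,r)$ is genuinely available. Once that is in hand, the rest of the argument is just using metric equivalence to convert $g(0)$-bounds into $g(t)$-bounds.
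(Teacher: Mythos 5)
Your overall architecture (compose a profile function with the $g(0)$-distance, use uniform equivalence of the metrics $g(t)$ to transfer $t=0$ bounds to time $t$) matches the paper's, and your metric-equivalence step is fine. But there are two genuine gaps. First, your bound $|\Gamma(t)-\Gamma(0)|_{g(0)}\leq C(\alpha,K)$ requires integrating $|\partial_t\Gamma|\lesssim|\nabla h|$ in time, and $|\nabla h|$ contains $|\nabla Rm|$ (equivalently $|\nabla^2T|$), which is \emph{not} controlled by the hypothesis $\Lambda\leq K$: that hypothesis bounds only $|Rm|$ and $|\nabla T|$, and Proposition \ref{prop-nabla-T} converts $\nabla T$ into $Rm$ and $T*T$ but says nothing about $\nabla Rm$. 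The missing ingredient is the local Shi-type derivative estimate from \cite[Theorem 4.3]{Lotay-Wei-1}, which gives $t^{1/2}(|\nabla Rm|+|\nabla^2T|)\leq C(\alpha,\sqrt{K}r)K$ on $B_{g(0)}(p,r)\times[0,T_*]$; this blows up like $t^{-1/2}$ but is integrable, yielding $\int_0^{T_*}|\nabla h|\,ds\leq C(\alpha,K,r)$. This is exactly how the paper controls the term $\Theta=tK^2|\nabla h|^2$ appearing in its version of the Laplacian bound; without it your second term $g(t)^{ij}(\Gamma(0)-\Gamma(t))^k_{ij}\partial_k\tilde d$ is not estimated.

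Second, your smoothing step claims both that $|Rm|_{g(0)}\leq K$ on $B_{g(0)}(p,2r)$ yields an injectivity radius lower bound in terms of $K,r$, and that one can produce $\tilde d$ with a two-sided Hessian bound $|\nabla^{g(0)}\nabla^{g(0)}\tilde d|\leq C(K,r)$. Neither is true: pointwise curvature bounds do not control the injectivity radius (thin flat cylinders), and near the cut locus the distance function has concave corners, so any smoothing must have Hessian unboundedly negative there unless the cut locus is quantitatively far away. Since the lemma requires $C_1=C_1(\alpha,K,r)$ only, you cannot fall back on compactness of $\overline{B}_{g(0)}(p,2r)$ either. The paper sidesteps this entirely: it works with the unsmoothed $d_{g(0)}(\cdot,p)$, uses only the \emph{one-sided} Laplacian comparison $\Delta_{g(0)}d\leq (n-1)/d+C\sqrt{K}$ (which needs only a lower Ricci bound and holds in the barrier sense), exploits $\chi'\leq 0$ so that only this one-sided bound is needed for $-\Delta\eta\leq C_1$, and invokes Calabi's trick (see the remark following the lemma) to treat $\eta$ as smooth when applying the maximum principle. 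If you reorganize your estimate to use only the sign of $\chi'$ and the barrier-sense Laplacian comparison, and insert the Shi-type estimate for the Christoffel drift, your argument becomes the paper's.
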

\proof
Recall that along the Laplacian flow \eqref{Lap-flow-def}, the associated metric $g(t)$ evolves by \eqref{evl-g}, i.e.~with velocity $2h(t)$, where $h(t)$ is given in \eqref{h-tensor-1}.
 Let $\Theta(x,t)=tK^2|\nabla h(x,t)|^2$. Since
\begin{equation}\label{lem6-1-pf1-1}
  |Rm|\leq \Lambda(x,t)\leq K \quad \textrm{in}~B_{g(0)}(p,r)\times [0,T_*],
\end{equation}
by a straightforward adjustment to the proof of \cite[Lemma 14.3]{Chow-etal-2008},   there exists a cut-off function $\eta:U\ra [0,1]$ with support in $B_{g(0)}(p,r)$ and with $\eta=1$ in $B_{g(0)}(p,r/2)$ such that
\begin{align}
  |\nabla\eta(x)|^2_{g(t)}\leq & ~\frac {C(\alpha)}{r^2}\eta(x), \label{lem6-1-pf2}\\
  -\Delta_{g(t)}\eta(x) \leq &~\frac{C(\alpha,\sqrt{K}r)}{r^2}+\frac{C(\alpha)}{K^{\frac 32}r}\sup_{s\in[0,t]}(\eta\Theta)^{\frac 12}(x,s)\label{lem6-1-pf3}
\end{align}
for some constants $C(\alpha)$ and $C(\alpha,\sqrt{K}r)$, for all $(x,t)\in B_{g(0)}(p,r)\times [0,T_*]$. We obtain \eqref{lem6-1-cond2} from \eqref{lem6-1-pf2} by defining $C_0=C(\alpha,n)/{r^2}$.

The cut-off function $\eta$ here is constructed by a composition of a scalar function with the Riemannian distance function $d_{g(0)}(x,p)$ with respect to the initial metric $g(0)$. The key is that the bound \eqref{lem6-1-pf1-1} and the fact $|T|^2=-R$ imply that $h$ in \eqref{h-tensor-1} is uniformly bounded in $B_{g(0)}(p,r)\times [0,T_*]$, which in turn implies the uniform equivalence of the metrics $g(t)$ for $t\in[0,T_*]$.

We next show \eqref{lem6-1-cond3}. Under the assumption \eqref{lem6-1-cond1}, the local Shi-type derivative estimates from \cite[Theorem 4.3]{Lotay-Wei-1} for $Rm$ and $T$ give that
\begin{equation}\label{lem6-1-pf4}
  t^{\frac 12}\left(|\nabla Rm|(x,t)+|\nabla^2T|(x,t)\right)\leq C(\alpha,\sqrt{K}r)K,
\end{equation}
for a constant $C(\alpha,\sqrt{K}r)$, for all $(x,t)\in B_{g(0)}(p,r)\times [0,T_*]$. 
(Note that in \cite[Theorem 4.3]{Lotay-Wei-1}, we only state the estimate when $\alpha=1$, but a trivial adjustment of the proof gives \eqref{lem6-1-pf4} as stated.) We deduce that
\begin{align}\label{lem6-1-pf5}
  \Theta(x,t)= &~tK^2|\nabla h(x,t)|^2\nonumber\\
  \leq &~CtK^2\left(|\nabla Rm|^2(x,t)+|T|^2|\nabla T|^2(x,t)\right)\nonumber\displaybreak[0]\\
   \leq &~CtK^2\left(|\nabla Rm|^2(x,t)+|Rm|^3(x,t)\right)\nonumber\\
   \leq &~C\left(C(\alpha,\sqrt{K}r)^2+\alpha\right)K^4
\end{align}
for all $(x,t)\in B_{g(0)}(p,r)\times [0,T_*]$, where in the third inequality we used Propositions \ref{Ric-prop}--\ref{prop-nabla-T} and $tK\leq \alpha$. Combining \eqref{lem6-1-pf5} and \eqref{lem6-1-pf3} gives \eqref{lem6-1-pf4}.
\endproof

\begin{rem}
Although $\eta$ in Lemma \ref{lem6-1} is constructed by a composition with a Riemannian distance function, by the standard Calabi's trick (see, for example, \cite[pp.453--456]{Chow-etal-2007}), we can for the purpose of applying the maximum principle treat $\eta$ as a smooth function.
\end{rem}

\begin{thm}\label{thm6-1}
Suppose that $\varphi(t), t\in [0,T_0]$ solves
the Laplacian flow \eqref{Lap-flow-def} on an open set $U\subset \M$. Let $p\in U$ and $r>0$ be such that  $\overline{B_{g(0)}(p,2r)}\subset U$ is compact. Let $K=\sup_{B_{g(0)}(p,2r)\times [0,T_0]}\Lambda(x,t)$, where $\Lambda(x,t)$ is given in \eqref{lambda-eq}, and let $\alpha>0$ be such that $\alpha\leq KT_0$.

There exist positive constants $L, C,T_*$ depending only on $\alpha,K,r$ such that
\begin{equation}\label{thm6-1-est}
  t^{\frac k2}\left(|\nabla^kRm|(x,t)+|\nabla^{k+1}T|(x,t)+|\nabla^{k+2}\varphi|(x,t)\right)\leq C L^{\frac k2}(k+1)!
\end{equation}
for all $k\in \mathbb{N}$ and $(x,t)\in B_{g(0)}(p,r/2)\times [0,T_*]$.
\end{thm}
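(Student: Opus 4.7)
The strategy is to run the maximum-principle argument of Theorem \ref{thm-deri-est} in a localized setting, using the Calabi-type cutoff $\eta$ from Lemma \ref{lem6-1} to confine the analysis to $B_{g(0)}(p, r)$. Introduce a large parameter $L = L(\alpha, K, r, C_0, C_1) \geq 1$ to be chosen, and define weighted localized analogues of the quantities in \eqref{def-ak}:
\begin{align*}
  \hat{a}_k = L^{-k/2}\eta^{k+1}a_k, \quad \hat{b}_k = L^{-k/2}\eta^{k+1}b_k, \quad \hat{c}_k = L^{-k/2}\eta^{k+1}c_k,
\end{align*}
with analogous definitions for $\hat{\tilde a}_k, \hat{\tilde b}_k, \hat{\tilde c}_k$. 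Set $\hat\Phi_N = \sum_{k=0}^N(\hat a_k^2 + \hat b_k^2 + \hat c_k^2)$ and $\hat\Psi_N = \sum_{k=1}^N(\hat{\tilde a}_k^2 + \hat{\tilde b}_k^2 + \hat{\tilde c}_k^2)$. Since $\eta \equiv 1$ on $B_{g(0)}(p, r/2)$, a uniform bound $\hat\Phi_N \leq C$ there is equivalent to the desired estimate \eqref{thm6-1-est}.

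The plan is next to compute $\left(\frac{\pt}{\pt t}-\Delta\right)\hat\Phi_N$ in analogy with Lemmas \ref{lem-4-5}--\ref{lem-4-7}. For each level $k$, since $\eta$ is time-independent,
\begin{align*}
  \left(\frac{\pt}{\pt t}-\Delta\right)\bigl(L^{-k}\eta^{2(k+1)}a_k^2\bigr) = L^{-k}\eta^{2(k+1)}\left(\frac{\pt}{\pt t}-\Delta\right)a_k^2 - L^{-k}a_k^2\,\Delta\eta^{2(k+1)} - 2L^{-k}\langle\nabla\eta^{2(k+1)}, \nabla a_k^2\rangle.
\end{align*}
The first term, combined with Lemma \ref{lem-4-5} (and Lemmas \ref{lem-4-6}--\ref{lem-4-7} for the $b_k$ and $c_k$ sectors), contributes the principal dissipation $-2L^{-k}\eta^{2(k+1)}\tilde a_{k+1}^2$ and localized analogues of the interaction terms already estimated in the global case. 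Using $|\nabla\eta|^2 \leq C_0 \eta$ and $-\Delta\eta \leq C_1$ from Lemma \ref{lem6-1}, the cross term and the Laplacian-of-cutoff term produce a total cutoff error of order $C(k+1)^2(C_0 + C_1)\,L^{-k}\eta^{2k+1}a_k^2$. The key arithmetic identity $(k+1)^2 a_k^2 = t\,\tilde a_k^2$ recasts this error as $C t(C_0 + C_1)\,L^{-k}\eta^{2k+1}\tilde a_k^2$, which via AM--GM can be absorbed into a fraction of the previous-level dissipation $-2L^{-(k-1)}\eta^{2k}\tilde a_k^2$, provided $L$ is chosen large in terms of $T_0(C_0+C_1)$.

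Summing the per-level inequalities for $\hat a_k^2, \hat b_k^2, \hat c_k^2$ and applying Cauchy--Schwarz as in the global proof leads to a differential inequality
\begin{align*}
  \left(\frac{\pt}{\pt t}-\Delta\right)\hat\Phi_N \leq -\tfrac12 \hat\Psi_N + C\hat\Phi_N + C\hat\Phi_N^{3/2}\bigl(1 + t\hat\Phi_N^{1/2}\bigr)^2
\end{align*}
on $U\times[0,T_0]$. The initial value $\hat\Phi_N(\cdot,0)$ is bounded by $CK^2$ uniformly in $N$ thanks to the hypothesis $\Lambda(\cdot,0)\leq K$ on $B_{g(0)}(p,2r)$ and $L\geq 1$. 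Treating $\eta$ as smooth via Calabi's trick (Remark following Lemma \ref{lem6-1}) allows the maximum principle to apply; bootstrapping exactly as at the end of the proof of Theorem \ref{thm-deri-est} yields $\hat\Phi_N \leq C'$ on $U\times[0,T_*]$ for some $T_* = T_*(\alpha,K,r) > 0$. Restricting to $B_{g(0)}(p,r/2)$, where $\eta \equiv 1$, gives $L^{-k}(a_k^2 + b_k^2 + c_k^2) \leq C'$, which is precisely \eqref{thm6-1-est}.

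The main obstacle lies in the cutoff-error accounting of the second paragraph: because the Calabi-type bound $|\nabla\eta|^2\leq C_0\eta$ provides only a $\sqrt\eta$ improvement per derivative of $\eta^p$, the naive cross-term estimate contains a factor $\eta^{2k+1}$---one power of $\eta$ short of matching the weight $\eta^{2(k+1)}$ used in $\hat a_k^2$. The key observation unlocking the argument is the identity $(k+1)^2 a_k^2 = t\tilde a_k^2$, which trades the missing power of $\eta$ for a shift down one level to $\tilde a_k$ and simultaneously converts the $(k+1)^2$ factor into absolute constants. Choosing $L$ large enough (depending on $\alpha, K, r, C_0, C_1$) then renders the cumulative cutoff error subordinate to the good dissipation $\hat\Psi_N$, reducing the remainder of the argument to the (already nontrivial) maximum-principle computation of Theorem \ref{thm-deri-est}.
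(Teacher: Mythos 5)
Your proposal is correct and follows essentially the same route as the paper's proof: a Kotschwar-style localization of Theorem \ref{thm-deri-est} using the cutoff of Lemma \ref{lem6-1}, with $k$-dependent powers of $\eta$ and a large parameter $L$ so that the cutoff errors (controlled via $|\nabla\eta|^2\leq C_0\eta$, $-\Delta\eta\leq C_1$, and the identity $(k+1)^2a_k^2=t\tilde a_k^2$) are absorbed into the dissipation $\tilde\Psi_{N+1}$, followed by the same $\tau_N$-bootstrapped maximum principle. The only differences are cosmetic: you weight $a_k$ by $\eta^{k+1}$ where the paper uses $\eta^{(k+1)/2}$ (both bookkeepings close up since $\eta\leq 1$), and your displayed differential inequality omits the $\bigl((1+t\hat\Phi_N^{1/2})^4-1\bigr)\hat\Psi_N$ term that is the actual reason the $\tau_N$ bootstrap you invoke is needed.
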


\begin{rem}
If $\varphi(t), t\in [0,T_0]$ is a smooth solution to the Laplacian flow \eqref{Lap-flow-def} on an open set $U\subset \M$, then a similar argument as in the proof of \cite[Theorem 1.3]{Lotay-Wei-1} shows that $\Lambda(x,t)$ is bounded on $U\times [0,T_0]$. Thus, $K=\sup_{B_{g(0)}(p,2r)\times [0,T_0]}\Lambda(x,t)$ in Theorem \ref{thm6-1} is finite.
\end{rem}

\proof
We consider localized modifications of $\Phi_N, \Psi_N$ in \S \ref{sec:global}, in a similar spirit to \cite{kot-2013}. For $L>0$, to be determined later, we define
\begin{align*}
  \alpha_k&=\frac{\eta^{\frac{k+1}2}}{L^{\frac k2}}a_k, &  \beta_k&=\frac{\eta^{\frac{k+1}2}}{L^{\frac k2}}b_k, & \gamma_k&=\frac{\eta^{\frac{k+1}2}}{L^{\frac k2}}c_k, & \textrm{for } k\geq 0, \\
  \tilde{\alpha}_k&=\frac{\eta^{\frac{k}2}}{L^{\frac{k-1}2}}\tilde{a}_k, &  \tilde{\beta}_k&=\frac{\eta^{\frac{k}2}}{L^{\frac{k-1}2}}\tilde{b}_k, & \tilde{\gamma}_k&=\frac{\eta^{\frac{k}2}}{L^{\frac{k-1}2}}\tilde{c}_k, &\textrm{for } k\geq 1,
\end{align*}
where $a_k,b_k,c_k,\tilde{a}_k,\tilde{b}_k,\tilde{c}_k$ are defined in \eqref{def-ak}--\eqref{def-ak-tilde} and $\eta$ is the cut-off function constructed in Lemma \ref{lem6-1}. We further define
\begin{equation}\label{def-Phi-td}
  \tilde{\Phi}_N=\sum_{k=0}^N\left(\alpha_k^2+\beta_k^2+\gamma_k^2\right)\quad
  \text{and}\quad \tilde{\Psi}_N=\sum_{k=1}^N\left(\tilde{\alpha}_k^2+\tilde{\beta}_k^2+\tilde{\gamma}_k^2\right).
\end{equation}
We aim to estimate $\tilde{\Phi}_N$. We first compute an evolution inequality for $\tilde{\Phi}_N$, by looking at each of $\alpha_k^2$, $\beta_k^2$ and $\gamma_k^2$ in turn.  First,
\begin{align}\label{6-1}
  \left(\frac{\pt}{\pt t}-\Delta\right)\alpha_k^2&=  \frac{\eta^{k+1}}{L^{k}}\left(\frac{\pt}{\pt t}-\Delta\right)a_k^2+\frac{a_k^2}{L^{k}}\left(\frac{\pt}{\pt t}-\Delta\right)\eta^{k+1}\nonumber\\
  &\quad-\frac 2{L^{k}}\langle\nabla\eta^{k+1},\nabla a_k^2\rangle.
\end{align}
By \eqref{lem6-1-cond3} and $t\leq T_0$, the second term on the right hand side of \eqref{6-1} satisfies
\begin{align}\label{6-2}
  \frac{a_k^2}{L^{k}}\left(\frac{\pt}{\pt t}-\Delta\right)\eta^{k+1}= & ~ \frac{a_k^2}{L^{k}}\left(-k(k+1)\eta^{k-1}|\nabla\eta|^2-(k+1)\eta^k\Delta\eta\right)\nonumber\displaybreak[0]\\
  \leq & ~(k+1)C_1\frac{a_k^2}{L^{k}} \eta^k=\frac{C_1t}{L(k+1)}\tilde{\alpha}_k^2  \leq
  \frac{C_1T_0}{L(k+1)}\tilde{\alpha}_k^2
\end{align}
on $U\times [0,\alpha/K]$.  To estimate the third term of \eqref{6-1}, we use \eqref{lem6-1-cond2}, $t\leq T_0$ and the Cauchy--Schwarz inequality:
\begin{align}\label{6-3}
  -\frac 2{L^{k}}\langle\nabla\eta^{k+1},\nabla a_k^2\rangle \leq& ~ 4\frac {\eta^k|\nabla\eta|}{L^k}\frac{t^k|\nabla^kRm||\nabla^{k+1}Rm|}{k!(k+1)!} \leq 
 \frac 14\tilde{\alpha}_{k+1}^2+\frac{16C_0T_0}L\tilde{\alpha}_k^2
\end{align}
on $U\times [0,\alpha/K]$. Substituting \eqref{lem-4-5-pf2}, \eqref{6-2} and \eqref{6-3} into \eqref{6-1}, we have
\begin{align}\label{6-4}
  \left(\frac{\pt}{\pt t}-\Delta\right)\alpha_k^2  \leq &~-\frac 74 \tilde{\alpha}_{k+1}^2+\frac {C}L\tilde{\alpha}_k^2+\frac{\eta^{k+1}}{L^{k}}(I_1(k)+I_2(k)+I_3(k)),
\end{align}
where $C=C(\alpha,r,T_0)$. Using \eqref{lem-4-5I1}--\eqref{lem-4-5I3}, we can estimate
\begin{align}\label{6-5}
  \sum_{k=1}^N\frac{\eta^{k+1}}{L^{k}}(I_1(k)&+I_2(k)+I_3(k))
   \leq  CK\tilde{\Phi}_N+CK^2\tilde{\Phi}_N^{\frac 12}+C\frac tLK\tilde{\Psi}_N   \nonumber\\
   &+C\frac tL\tilde{\Phi}_N^{\frac 12}\tilde{\Psi}_N
   +CK^{\frac 12}\tilde{\Psi}_{N+1}^{\frac 12}\tilde{\Phi}_N^{\frac 12}+C\frac{t^2}{L^2}\tilde{\Phi}_N\tilde{\Psi}_N.
\end{align}
Combining \eqref{lem-4-5-pf3} and \eqref{6-1}--\eqref{6-5}, we obtain
\begin{align}\label{6-6}
    \left(\frac{\pt}{\pt t}-\Delta\right)\sum_{k=0}^N\alpha_k^2 \leq & -\frac 32\tilde{\Psi}_{N+1}+C(\tilde{\Phi}_N+1)\nonumber \\
&~+C\left(\frac 1L+\frac tL\tilde{\Phi}_N^{\frac 12}+\frac{t^2}{L^2}\tilde{\Phi}_N \right) \tilde{\Psi}_N
\end{align}
on $U\times [0,\alpha/K]$, again using Cauchy--Schwarz and $tK\leq \alpha$, where the constant $C$ depends only on $\alpha,r,K,T_0$. We can deal with $\sum_{k=0}^N\beta_k^2$ and $\sum_{k=0}^N\gamma_k^2$ similarly and obtain the following estimate:
\begin{align}\label{6-7}
    \left(\frac{\pt}{\pt t}-\Delta\right)&\sum_{k=0}^N(\beta_k^2+\gamma_k^2) \leq  -\frac 32\tilde{\Psi}_{N+1}+C(\tilde{\Phi}_N+1)\nonumber \\
&~+C\left(\frac 1L+\frac tL\tilde{\Phi}_N^{\frac 12}+\frac{t^2}{L^2}\tilde{\Phi}_N+\frac{t^4}{L^4}\tilde{\Phi}_N^4\right) \tilde{\Psi}_N.
\end{align}
Since $-\tilde{\Psi}_{N+1}\leq -\tilde{\Psi}_{N}$, we may put together the estimates \eqref{6-6}--\eqref{6-7} and choose $L$ large enough so that
\begin{align*}
    \left(\frac{\pt}{\pt t}-\Delta\right)\tilde{\Phi}_N \leq &~ -\tilde{\Psi}_{N}+C_2(\tilde{\Phi}_N+1)+C_3\left(\left(1+t\tilde{\Phi}_N^{\frac 12}\right)^4-1\right)\tilde{\Psi}_{N},
\end{align*}
where $C_2,C_3$ depend only on $\alpha,r,K,T_0$. Let
\begin{equation*}
  \tau_N=\{a\in [0,\alpha/K]~|~ t\tilde{\Phi}_N^{\frac 12}\leq (C_3^{-1}+1)^{\frac 14}-1 ,\forall (x,t)\in U\times [0,a]\}.
\end{equation*}
Then on $B_{g(0)}(p,r)\times [0,\tau_N]$,
\begin{equation*}
  \left(\frac{\pt}{\pt t}-\Delta\right)\tilde{\Phi}_N \leq C_2(\tilde{\Phi}_N+1).
\end{equation*}
As $\tilde{\Phi}_N=0$ on $\pt B_{g(0)}(p,r)\times [0,\tau_N]$ and $\tilde{\Phi}_N(\cdot,0)\leq 225K^2$, the maximum principle gives that
\begin{equation*}
  \tilde{\Phi}_N\leq (225K^2+1)e^{C_2T_0}:=C_4
\end{equation*}
for all $(x,t)\in B_{g(0)}(p,r)\times [0,\tau_N]$. Let
\begin{equation*}
  T_*=\min\{\alpha/K, C_4^{-\frac 12}((C_3^{-1}+1)^{\frac 14}-1)\}>0,
\end{equation*}
which depends only on $\alpha,r,K,T_0$. Then $\tau_N\geq T_*$ for all $N\in \mathbb{N}$ and thus
  $\tilde{\Phi}_N\leq C_4$
for all $N\in \mathbb{N}$ and $(x,t)\in B_{g(0)}(p,r)\times [0,T_*]$. Since $\eta\equiv 1$ on $B_{g(0)}(p,r/2)$, the estimate \eqref{thm6-1-est} follows.
\endproof

Finally, Theorem \ref{mainthm-local} follows from Theorem \ref{thm6-1} and the discussion in \S \ref{sec:real-anal}.

\bibliographystyle{Plain}

\end{document}